\def\BibTeX{{\rm B\kern-.05em{\sc i\kern-.025em b}\kern-.08em
		T\kern-.1667em\lower.7ex\hbox{E}\kern-.125emX}}
\tikzset{actor/.style={
		rectangle,
		minimum size=6mm,
		very thick,
		draw=red!50!black!50,
		top color=white,
		bottom color=red!50!black!20
	},
	arrow/.style={
		-latex, thick, shorten <=2pt,shorten >=2pt
	}
}
\newtheorem{theorem}{Theorem}
\newtheorem{lemma}{Lemma}
\newtheorem{proposition}{Proposition}
\newtheorem{assumption}{Assumption}
\newtheorem{remark}{Remark}
\newcommand\scalemath[2]{\scalebox{#1}{\mbox{\ensuremath{\displaystyle #2}}}}
\newcommand{\tr}{\mathrm{tr}}
\newcommand{\T}{^{\top}}
\newcommand{\Ad}[1]{\mathrm{Ad}_{#1}}
\newcommand{\se}{\mathfrak{se}}
\newcommand{\proj}{\mathrm{Proj}}
\newcommand{\ie}{\emph{i.e.}}
\newcommand{\dom}[1]{\text{dom} #1}
\providecommand{\keywords}[1]{\small \textbf{{Index terms---}}#1}
\begin{document}
%

\title{On the Design of Hybrid Pose and Velocity-bias Observers on Lie Group $SE(3)$}

\author{Miaomiao~Wang~and~Abdelhamid~Tayebi 
	\thanks{
		This work was supported by the National Sciences and Engineering Research Council of Canada (NSERC). A preliminary and partial version of this work was presented in \cite{wang2017globally}.}
	\thanks{
		The authors are with the Department of Electrical and Computer Engineering, University of Western Ontario, London, Ontario, Canada. A. Tayebi is also with the Department of Electrical Engineering, Lakehead University, Thunder Bay, Ontario, Canada.
		{\tt\small mwang448@uwo.ca}, {\tt\small atayebi@lakeheadu.ca}}%
}


\maketitle

\begin{abstract}
This paper deals with the design of globally exponentially stable invariant observers on the Special Euclidian group $SE(3)$. First, we propose a generic hybrid observer scheme (depending on a generic potential function) evolving on {$SE(3)\times \mathbb{R}^6$} for pose (orientation and position) and velocity-bias estimation. Thereafter, the proposed observer is formulated explicitly in terms of inertial vectors and landmark measurements. Interestingly, the proposed observer leads to a decoupled rotational error dynamics from the translational dynamics, which is an interesting feature in practical applications with noisy measurements and disturbances.
\end{abstract}

\keywords{Nonlinear Observer, Special Euclidean group $SE(3)$, Hybrid dynamics, Inertial-vision systems.}

\section{Introduction}
The development of reliable pose (\textit{i.e,} attitude and position) estimation algorithms is instrumental in many applications such as  autonomous underwater vehicles and unmanned aerial vehicles.
Since there is no sensor that directly measures the attitude, the latter is usually determined using body-frame measurements of some known inertial vectors via static determination algorithms \cite{Shuster1981} which are generally sensitive to measurement noise. Alternatively, dynamic estimation algorithms using inertial vector measurements together with the angular velocity can be used to recover the attitude while filtering measurement noise (\textit{e.g.,} Kalman filters \cite{crassidis2007survey}, linear complementary filters \cite{tayebi2006attitude}, nonlinear complementary filters \cite{mahony2008nonlinear}). In low-cost applications, angular velocity and inertial vector measurements can be obtained, for instance, from an inertial measurement unit (IMU) equipped with gyroscopes, accelerometers and magnetometers. The translational position and velocity can be estimated using a Global Positioning System (GPS). However, in GPS-denied environments such as indoor applications, recovering the position and linear velocity is a challenging task. Alternatively, inertial-vision systems combining IMU and on-board camera measurements have been considered for pose estimation \cite{rehbinder2003pose,tarek2017riccati,moeini2016global}. In \cite{tarek2017riccati}, local Riccati-based pose observers have been proposed relying on the system's linear and angular velocities and the bearing measurements of some known landmark points. Another solution with global asymptotic stability (GAS) has been proposed in\cite{moeini2016global}, which considers a non-geometric pose estimation problem using biased body-frame measurements of the system's linear and angular velocities as well as body-frame measurements of landmarks. The  achieved global asymptotic stability results are due to the fact that the estimates are not confined to live in $SE(3)$ for all times.  

Recently, a class of nonlinear observers on Lie groups including $SO(3)$ and $SE(3)$ have made their appearances in the literature. Invariant observers which take into account the topological properties of the motion space were developed in \cite{mahony2008nonlinear,bonnabel2009non,lageman2010gradient}. Motivated by the work of \cite{mahony2008nonlinear} on $SO(3)$, complementary observers on $SE(3)$ were proposed in \cite{baldwin2007complementary,hua2011observer}. In practice, measurements of group velocity (translational and rotational velocities) are often corrupted by an unknown bias. Pose estimation using biased velocity measurements were considered in  \cite{vasconcelos2010nonlinear,khosravian2015observers,hua2015gradient}. A nice feature of  \cite{hua2015gradient} is the fact that the observer incorporates (naturally) both inertial vector measurements (\textit{e.g.,} from IMU) and landmark measurements (\textit{e.g.,} from a vision system). The observers proposed in \cite{baldwin2007complementary,hua2011observer,vasconcelos2010nonlinear,khosravian2015observers,hua2015gradient} are shown to guarantee almost global asymptotic stability (AGAS), \textit{i.e.,} the estimated pose converges to the actual one from almost all initial conditions except from a set of Lebesgue measure zero. This is the strongest result one can aim at when considering continuous time-invariant state observers on $SO(3)$ or $SE(3)$.   

Recently, the topological obstruction to global asymptotic stability on $SO(3)$ using continuous time-invariant controllers (observers) has been successfully addressed via hybrid techniques such as \cite{mayhew2011hybrid,wu2015globally,berkane2017construction,berkane2017CDC,berkane2017hybrid2}. To this end, a synergistic hybrid technique was introduced in \cite{mayhew2011hybrid}. Motivated by this approach, globally asymptotically stable hybrid attitude observers on $SO(3)$ have been proposed in \cite{wu2015globally}  and globally exponentially stable hybrid attitude observers on $SO(3)$ have been proposed in \cite{berkane2017hybrid2}. 

In this paper we propose an generic approach for hybrid observers design on $SE(3)$ leading to global exponential stability. To the best of our knowledge, there is no work in the literature achieving such results on $SE(3)$.  Moreover, we propose some explicit and practically implementable versions of the proposed scheme, using biased group-velocity measurements (constant bias), inertial vectors and landmark measurements. Interestingly, the proposed observer leads to a decoupled rotational error dynamics from the translational error dynamics, which guarantees nice robustness and performance properties. 

The remainder of this paper is organized as follows. Section \ref{sec:backgroud} introduces some preliminary notions that will be used throughout out the paper. In Section \ref{sec:problem_statemente}, the problem of pose estimation on $SE(3)$ is formulated. In Section \ref{sec:global_exponential}, a hybrid approach for pose and velocity-bias estimation is proposed. An estimation scheme leading to decoupled rotational error dynamics from the translational error dynamics is provided in Section \ref{sec: fully_decoupled}. Section \ref{sec:simulation} presents some simulation results showing the performance of the proposed estimation schemes.

\section{Background and Preliminaries} \label{sec:backgroud}
\subsection{Notations and mathematical preliminaries}
The sets of real, nonnegative real and natural number are denoted as $\mathbb{R}$, $\mathbb{R}_{\geq 0}$ and $\mathbb{N}$, respectively. We denote by $\mathbb{R}^n$ the $n$-dimensional Euclidean space and $\mathbb{S}^n$ the set of $n$-dimensional unit vectors. Given two matrices, $A,B\in \mathbb{R}^{m\times n}$, their Euclidean inner product is defined as $\langle\langle A,B\rangle\rangle := \tr(A\T B)$. The Euclidean norm of a vector $x\in \mathbb{R}^n$ is defined as $\|x\| := \sqrt{x\T x}$, and the Frobenius norm of a matrix $X\in \mathbb{R}^{n\times m}$ is given by $\|X\|_F := \sqrt{\langle \langle X, X\rangle\rangle}$. The $n$-by-$n$ identity matrix is denoted by $I_n$. For a matrix $A\in \mathbb{R}^{n\times n}$, we define $\mathcal{E}(A)$ as the set of all eigenvectors of $A$ and $\mathbb{E}(A)\subseteq \mathcal{E}(A)$ as the eigenbasis set of $A$. Let $\lambda^A_i$ be the $i$-th eigenvalue of $A$, and $\lambda^A_{\min}$ and $\lambda^A_{\max}$ be the minimum and maximum eigenvalue of $A$, respectively.

Let $SO(3): = \{R\in \mathbb{R}^{3\times 3}| R\T R=I_3, \det(R)=1\}$ be the 3-dimensional \textit{Special Orthogonal group} $SO(3)$. Consider the $3$-dimensional \textit{Special Euclidean group}, defined as
\[
SE(3): = \left\{g=\mathcal{T}(R,p) \in  \mathbb{R}^{4\times 4} \left|  
R\in SO(3),p\in \mathbb{R}^3\right. \right\},
\]
with the map $\mathcal{T}:SO(3)\times \mathbb{R}^3 \to SE(3)$ given by
\[
\mathcal{T}(R,p) :=\begin{bmatrix}
R & p \\
0  & 1
\end{bmatrix}.
\]
On any Lie group the tangent space at the group identity has the structure of a \textit{Lie algebra}.
Let $\mathfrak{so}(3):=\{\Omega\in \mathbb{R}^{3\times 3}| \Omega\T=-\Omega\}$ be the Lie algebra of $SO(3)$. The Lie algebra of  $SE(3)$, denoted by $\mathfrak{se}(3)$, is given by
\begin{align*}
\mathfrak{se}(3) &:= \left\{X \in \mathbb{R}^{4\times 4}| X= \begin{bmatrix}
\Omega & v \\
0  & 0
\end{bmatrix}, \Omega \in \mathfrak{so}(3), v\in \mathbb{R}^3\right\}.
\end{align*}
Let $\times$ be the vector cross-product on $\mathbb{R}^3$ and define the map $(\cdot)^\times: \mathbb{R}^3 \mapsto \mathfrak{so}(3)$ such that $x \times y = x^\times y$, for any $x,y\in \mathbb{R}^3$.
A wedge map $(\cdot)^\wedge: \mathbb{R}^6 \mapsto \mathfrak{se}(3)$ is defined as
\[
\xi^\wedge := \begin{bmatrix}
\omega^\times & v \\
0 & 0
\end{bmatrix},\quad  \xi := \begin{bmatrix}
\omega \\
v
\end{bmatrix}.
\]
The \textit{tangent space} of the group $SE(3)$, is identified by
$
T_g SE(3) : = \{gX |\ g\in SE(3), X\in \mathfrak{se}(3)\}.
$
Let $\langle\cdot,\cdot\rangle_g: T_g SE(3) \times T_g SE(3) \to \mathbb{R}$ be a \textit{Riemannian metric} on $SE(3)$, such that
\begin{align*}
\langle gX,gY\rangle_g := \langle\langle X,Y\rangle\rangle, \quad g\in SE(3), X,Y \in \mathfrak{se}(3).
\end{align*}
Given a differentiable smooth function $f: SE(3) \to \mathbb{R}$, the $\textit{gradient}$ of $f$ , denoted $\nabla_g f \in T_g SE(3) $, relative to the Riemannian metric $\langle\cdot,\cdot\rangle_g$ is uniquely defined by
$$
df\cdot gX = \langle \nabla_g f ,gX\rangle_g = \langle\langle g^{-1}\nabla_g f ,X\rangle\rangle,  
$$
For all $g\in SE(3), X \in \mathfrak{se}(3)$. A point $g\in SE(3)$ is called \textit{critical point} of $f$ if the gradient of $f$ at $g$ is zero (\textit{i.e.}, $\nabla_g f =0$). The set of all critical points of $f$ on $SE(3)$ is denoted by $\mathcal{C} f := \{g \in SE(3) | \nabla_g f =0\} \subseteq SE(3)$. For any $g\in SE(3)$, we define $|g|_I$ as the distance with respect to $I_4$, which is given by $|g|_I := \|I_4-g\|_F$. The map $\mathcal{R}_a :\mathbb{R}\times \mathbb{S}^2 \to SO(3)$ known as the angle-axis parametrization of $SO(3)$ is defined as
$
\mathcal{R}_a(\theta,u) := I_3 + \sin\theta u^\times + (1-\cos\theta) (u^\times)^2.
$

For a matrix $A\in \mathbb{R}^{3\times 3}$, we denote by $\mathbb{P}_a(A)$ as the anti-symmetric projection of A, such that $\mathbb{P}_a(A) = (A-A\T)/2$. Let  $\mathbb{P}:\mathbb{R}^{4\times 4} \rightarrow \mathfrak{se}(3)$ denote the projection of $\mathbb{A}$ on the Lie algebra  $\mathfrak{se}(3)$, such that, for all $X\in \mathfrak{se}(3)$, $ \mathbb{A} \in \mathbb{R}^{4\times 4}$ one has
$ \langle\langle\mathbb{A}, X\rangle\rangle=\langle\langle X,\mathbb{P}( \mathbb{A} )\rangle\rangle = \langle\langle \mathbb{P}( \mathbb{A} ),X\rangle\rangle.
$
For all $A\in \mathbb{R}^{3\times 3}, b,c\T \in
\mathbb{R}^3$ and $d\in \mathbb{R}$, one has
\begin{equation}
\mathbb{P}\left(\begin{bmatrix}
A & b \\
c & d
\end{bmatrix}\right) := \begin{bmatrix}
\mathbb{P}_a(A) & b\\
0_{1 \times 3} & 0
\end{bmatrix}.
\end{equation}
Let $\text{vex}: \se(3) \to \mathbb{R}^6$ denote the inverse isomorphism of the map $(\cdot)^\wedge$, such that $\text{vex}(\xi^\wedge) = \xi$ and $(\text{vex}(X))^\wedge = X$, for all $\xi\in \mathbb{R}^6$ and $X\in \se(3)$. For a matrix $\mathbb{A} = \begin{bmatrix}
A & b\\
c & d
\end{bmatrix} $, we also define the following maps
\begin{equation}
\psi_b(\mathbb{A} ):= \text{vex} (\mathbb{P}(\mathbb{A}))=  \begin{bmatrix}
\psi_a(A) \\
b
\end{bmatrix},  \psi_a(A) := \frac{1}{2}\begin{bmatrix}
a_{32} - a_{23} \\
a_{13} - a_{31} \\
a_{21} - a_{12}
\end{bmatrix} \label{eqn:definiton_psi},
\end{equation}
with $A=[a_{ij}]$. 
It is verified that for all $\mathbb{A}\in \mathbb{R}^{4\times 4}, y\in \mathbb{R}^6$,
\begin{align*}
\langle\langle \mathbb{A}, y^\wedge  \rangle\rangle = 2\psi_b(\mathbb{A})\T \Theta y \quad \text{with }  \Theta :=  \text{diag}(\begin{bmatrix}
 I_3  & \frac{1}{2}I_3
\end{bmatrix} ) 
\end{align*}


Given a rigid body with configuration $g\in SE(3)$, the \textit{adjoint map} $\Ad{g}(\cdot): SE(3) \times \se(3) \to \se(3)$ is given by
$
\Ad{g}(X)   := g X  g^{-1}   \text{~with } g\in SE(3), X \in \se(3).
$
The matrix representation of the adjoint map on $\se(3)$ is defined as
\begin{equation}
\Ad{g}:= \begin{bmatrix}
R & 0\\
p^\times R & R
\end{bmatrix} \in \mathbb{R}^{6\times 6},
\end{equation}
such that $g x^\wedge g^{-1} = (\Ad{g}x )^\wedge$, for all $g\in SE(3), x\in \mathbb{R}^6$. One verifies that $\Ad{g_1} \Ad{g_2} = \Ad{g_1 g_2}$, for all $g_1, g_2\in SE(3)$. Define $\Ad{g}^*(\cdot)$ as the Hermitian adjoint of $\Ad{g}(\cdot)$ with respect to the inner product $\langle\langle\cdot, \cdot\rangle\rangle$ on $\se(3)$ associated with the right-invariant Riemannian metric, such that
$
 \langle \langle Y, \Ad{g} (X) \rangle\rangle = \langle  \langle \Ad{g}^* (Y), X \rangle \rangle,
$
 for all $X, Y\in \se(3), g\in SE(3)$.  
The matrix representation of the map $\Ad{g}^*(\cdot)$ is given by
\begin{align}
\Ad{g}^* := \begin{bmatrix}
R\T & -R\T p^\times \\
0 & R\T
\end{bmatrix}. 
\end{align}
For all $X\in \se(3)$ and $g\in SE(3)$, one has $\psi_b(g\T X g^{-\top}) = \Theta^{-1}\Ad{g}^*\Theta\psi_b(X)$. 
For the sake of simplicity, let us define the map $\psi:\mathbb{R}^{4\times 4} \to \mathbb{R}^6$ such that $\psi(\mathbb{A})= \Theta \psi_b(\mathbb{A})$, for all $\mathbb{A}\in \mathbb{R}^{4\times 4}$. The following identities are used throughout this paper:
\begin{align}
&\langle\langle \mathbb{A}, y^\wedge  \rangle\rangle  = 2\psi(\mathbb{A})\T   y  \label{eqn:psi_property}. \\
&\psi(g\T X g^{-\top})  = \Ad{g}^*\psi(X)
\end{align}

For any two vectors $r ,b \in \mathbb{R}^4$, we define the following wedge product (exterior product)  $\wedge$ as
\begin{equation}
b \wedge r :
= \begin{bmatrix}
b_v\times r_v \\
b_s r_v -
r_s b_v
\end{bmatrix} \in \mathbb{R}^6, \label{eqn:wedge_product}
\end{equation}
where $r = (r_{v}\T,r_{s})\T, b = (b_{v}\T,b_{s})\T$ with $r_v, b_v \in \mathbb{R}^3$ and $r_s, b_s \in \mathbb{R}$. For all $r,b \in \mathbb{R}^4, g\in SE(3)$, one can easily verify that $ r \wedge r=0, b\wedge r= - r\wedge b $, and
\begin{align} 
&((g b) \wedge (g r))^\wedge 
=(\Ad{g^{-1}}^* (b\wedge r))^\wedge = g^{-\top} (b\wedge r)^\wedge g^{-1} \label{eqn:property_1}\\
&\psi ((I_4 -g)r r\T  )  =  \frac{1}{2}(gr) \wedge r \label{eqn:property_2}.
\end{align}
Let $\mathcal{M}_0$ denote the sub-manifold of $\mathbb{R}^{4\times 4}$, defined as
	\[
	\mathcal{M}_0:= \left\{M  \left| M=\begin{bmatrix}
	M_1 & m_2 \\
	0 & 0
	\end{bmatrix},M_1 \in \mathbb{R}^{3\times 3}, m_2 \in \mathbb{R}^3 \right. \right\}.
	\]
	Then, for all $g\in SE(3), M, \bar{M}\in \mathcal{M}_0$, the following properties hold
	\begin{align}
	&\mathbb{P}(gM)  = \mathbb{P}(g^{-\top} M),   \label{eqn:property1} \\
	& \tr(g\T g M \bar{M}\T)    =  \tr(M\bar{M}\T). \label{eqn:property2}
	\end{align} 
Throughout the paper, we will also make use of the following matrix decomposition:
\begin{equation}
\begin{bmatrix}
A &b \\
b\T & d
\end{bmatrix}=\begin{bmatrix}
I_3 &bd^{-1} \\
0 & 1
\end{bmatrix} \begin{bmatrix}
A - bd^{-1}b\T & 0 \\
0 & d
\end{bmatrix}\begin{bmatrix}
I_3 & 0 \\
b\T d^{-1} & 1
\end{bmatrix}.  \label{eqn:A_dcomposition}
\end{equation}

\subsection{Hybrid Systems Framework}
Define a \textit{hybrid time domain} as a subset $E \subset \mathbb{R}_{\geq 0} \times \mathbb{N}$ in the form
\[
E = \bigcup_{j=0}^{J-1} ([t_j,t_{j+1}] \times \{j\})
\]
for some finite sequence $0=t_0 \leq t_1 \leq \cdots \leq t_J$, with the ``last'' interval possibly of the form $([t_{J-1}, t_J)\times J)$ or $([t_{J-1}, +\infty)\times J)$. On each hybrid time domain there is a natural ordering of points : $(t,j)\preceq (t',j')$ if $t\leq t'$ and $j\leq j'$.

Given a manifold $\mathcal{M}$, we consider the following hybrid system \cite{goebel2009hybrid}:
\begin{align}
\mathcal{H}: ~ \begin{cases}
\dot{x}~~ \in F(x),& \quad x \in \mathcal{F}   \\
x^{+} \in G(x),& \quad x \in \mathcal{J}
\end{cases}
\end{align}
where the \textit{flow map} $F: \mathcal{M} \to T\mathcal{M}$ describes the continuous flow of $x$ on the \textit{flow set} $\mathcal{F} \subset \mathcal{M}$; the \textit{jump map} $G: \mathcal{M} \to T\mathcal{M}$ describes the discrete flow of $x$ on the \textit{jump set} $\mathcal{J} \subset \mathcal{M}$. A hybrid arc is a function $x: \dom{x} \to \mathbb{R}^n$, where $\dom{x}$ is a hybrid time domain and, for each fixed $j$, $t \mapsto x(t,j)$ is a locally absolutely continuous function on the interval $I_j = \{t:(t,j) \in \dom{x}\}$. For more details on dynamic hybrid systems, we refer the the reader to \cite{goebel2009hybrid,goebel2012hybrid} and references therein. 


\section{Problem Formulation and Preliminary Results} \label{sec:problem_statemente}
Let $\mathcal{I}$ be an inertial frame and $\mathcal{B}$ be a body-fixed frame. Let $p\in \mathbb{R}^3$ denote the rigid body position expressed in the inertial frame $\mathcal{I}$, and $R\in SO(3)$ the rigid body attitude describing the rotation of frame $\mathcal{B}$ with respect to frame $\mathcal{I}$.
We consider the problem of pose estimation of the rigid body, \textit{i.e.,} position $p$ and attitude $R$. The pose of the rigid body can be represented by
$
g  =\mathcal{T}(R,p)\in SE(3).
$
This representation is commonly known as the homogeneous representation. Let $\omega\in \mathbb{R}^3$ denote the angular velocity of the body-fixed frame $\mathcal{B}$ with respect to the inertial frame $\mathcal{I}$, expressed in frame $\mathcal{B}$. Let $v \in \mathbb{R}^3$ be the translational velocity, expressed in frame $\mathcal{B}$. The pose is governed by the following dynamics:
\begin{equation}
\dot{g} = g \xi^\wedge,  \label{eqn:kenimatic_g}
\end{equation}
where $\xi := (\omega\T,v\T)\T \in \mathbb{R}^6$.
Note that system (\ref{eqn:kenimatic_g}) is left invariant in the sense that it preserves the Lie group invariance properties with respect to constant translation and constant rotation of the body-fixed frame $\mathcal{B}$. Let the group velocity be piecewise-continuous, and consider the following biased group velocity measurement:
\begin{equation}
\xi_y := \xi + b_a . \label{eqn:velocity_bias}
\end{equation}
where $b_a := (b_\omega\T,b_v\T)\T \in \mathbb{R}^6$ with $b_\omega,b_v\in \mathbb{R}^3$ denotes the unknown constant velocity biases.
Moreover, a family of $n$ constant homogeneous vectors $r_i \in \mathbb{R}^4, i=1,2,\cdots,n$, known in the inertial frame $\mathcal{I}$, are assumed to be measured in the  frame $\mathcal{B}$ as
\begin{equation}
b_i =h(g,r_i):= g^{-1}r_i, \quad i=1,2,\cdots,n.  \label{eqn:output_measurement}
\end{equation}
Assume that, among the $n$ inertial elements, there are $n_1$ feature points (or landmarks) $p_i^\mathcal{I}, i=1,2,\cdots, n_1$, and $n_2=n-n_1$ inertial vectors $v_i^\mathcal{I}, i=1,2,\cdots,n_2$, \ie,
\begin{equation}
r_i = \begin{bmatrix}
p_i^\mathcal{I} \\
1
\end{bmatrix},  i=1,\cdots,n_1 \quad
r_{j+n_1} = \begin{bmatrix}
v_{j}^\mathcal{I} \\
0
\end{bmatrix},  j=1,,\cdots,n_2
\label{eqn:definition_r_i}
\end{equation}

Define the following modified inertial vectors and weighted geometric landmark center:
\begin{equation}
\textstyle \bar{v}_i^\mathcal{I} := p_i^{\mathcal{I}}  - p_c^\mathcal{I},\quad p^{\mathcal{I}}_c  :=\sum_{i=1}^{n_1} \alpha_i p^{\mathcal{I}}_i
\label{eqn:p_c}
\end{equation}
with $\alpha_i > 0$ for all $i=1,2,\cdots,n_1$ and $\sum_{i=1}^{n_1} \alpha_i =1$.

\begin{assumption}\label{assump:1}
	Among the $n$ measurements, at least one landmark point is measured, and at least two vectors from the set $V^{\mathcal{I}} := \{\bar{v}_1^\mathcal{I},\cdots,\bar{v}_{n_1}^\mathcal{I},v_1^\mathcal{I}, \cdots, v_{n-n_1}^\mathcal{I}\}$ are non-collinear.
\end{assumption}
\begin{remark}	
	From Assumption \ref{assump:1} one verifies that $n_1\geq 1$ and $n\geq 3$. This assumption is standard in estimation problems in $SE(3)$, \textit{e.g.,} \cite{vasconcelos2010nonlinear,hua2011observer,hua2015gradient}, which is satisfied in the following particular cases:
	\begin{itemize}
		\item Three different landmark points are measured such that the corresponding $\bar{v}_i^{\mathcal{I}}$, $i=1,2,3$, are non-collinear.
		\item One landmark point and two non-collinear inertial vectors are measured.
		\item Two different landmark points and one inertial vector are measured such that the corresponding $v_1^\mathcal{I}$ and $\bar{v}_i^\mathcal{I},i=1,2$ are non-collinear.
	\end{itemize}
\end{remark}

\begin{assumption}\label{assump:2}
	The pose $g$ and group velocity $\xi$ of the rigid body are uniformly bounded.
\end{assumption}

Our objective is to design a globally exponentially stable hybrid pose and velocity-bias estimation scheme that provides estimates $\hat{g}$ and $\hat{b}_a$ of $g$ and $b_a$, respectively, using the available measurements satisfying Assumption \ref{assump:1} and Assumption \ref{assump:2}.

\section{Gradient-based Hybrid Observer Design} \label{sec:global_exponential}
In this paper, we make use of the framework of hybrid dynamical systems presented in\cite{goebel2009hybrid,goebel2012hybrid}.
Consider a positive-valued continuously differentiable function $\mathcal{U}:SE(3) \to \mathbb{R}_{\geq 0}$. The function $\mathcal{U}(g)$ is said to be a potential function on $SE(3)$ if $\mathcal{U}(g)\geq 0$ for all $g\in SE(3)$ and $\mathcal{U}(g)=0$ if and only if $g=I_4$. For all $g\in SE(3)$, $\nabla_g \mathcal{U}(g)$ denotes the gradient of $\mathcal{U}$ with respect to $g$. Let $ \Psi_{\mathcal{U}}$ denote the set of critical points and $\mathcal{X}_{\mathcal{U}} := \Psi_{\mathcal{U}}/\{I_4\}$ be the set of undesired critical points \footnote{As shown in \cite{koditschek1989application}, no smooth vector field on Lie groups, which are not homeomorphic to $\mathbb{R}^n$, can have a global attractor. Therefore, any smooth potential function on $SE(3)$(or $SO(3)$), has at least four critical points.}.

\subsection{Generic hybrid pose and velocity-bias estimation filter}
Let $\hat{g}:=\mathcal{T}(\hat{R},\hat{p})$ and $\hat{b}_a$ denote, respectively, the estimates of the rigid body pose and velocity bias. Define the pose estimation error $\tilde{g}:=\mathcal{T}(\tilde{R},\tilde{p})=g\hat{g}^{-1}$ and bias estimation error $\tilde{b}_a := \hat{b}_a - b_a$. Given a nonempty finite set $\mathbb{Q}\subset SE(3)$, we propose the following generic hybrid pose and velocity-bias estimation scheme relying on a generic potential function on $SE(3)$:
\begin{align}
&\underbrace{\begin{array}{ll}
	\dot{\hat{g}}~~  = \hat{g} (\xi_y - \hat{b}_a+ k_{\beta}\beta )^\wedge  	\\
	\dot{\hat{b}}_a~  =   -\Gamma \sigma_b
	\end{array}} _{(\hat{g},\hat{b}_a)~\in~ \mathcal{F}_o }
\underbrace{\begin{array}{ll}
	\hat{g}^{+} ~ = g_q^{-1} \hat{g},  g_q \in \gamma({\hat{g}})      \\[0.1cm]
	\hat{b}_a^{+} ~ = \hat{b}_a
	\end{array}  } _{(\hat{g},\hat{b}_a)~\in~ \mathcal{J}_o } ,  \label{eqn:observer_design} \\
&~~~\beta  ~~:= \Ad{\hat{g}^{-1}} \psi({\tilde{g}}^{-1}\nabla_{{\tilde{g}}} \mathcal{U}({\tilde{g}}) ) , \label{eqn:observer_design_beta}\\
&~~~\sigma_b  ~:=   \Ad{{\hat{g}}}^* \psi({\tilde{g}}^{-1}\nabla_{{\tilde{g}}} \mathcal{U}({\tilde{g}},)) ,\label{eqn:observer_design_sigma}
\end{align}
where $\hat{g}(0) \in SE(3),  \hat{b}_a(0) \in \mathbb{R}^6$, $\Gamma := \text{diag}(k_{\omega} I_3, k_v I_3) \in \mathbb{R}^{6\times 6}$ and $k_\omega,k_v, k_{\beta}> 0$. The set-valued map $\gamma: SE(3) \rightrightarrows \mathbb{Q}$ is defined as $\gamma(\hat{g}) := \{g_q\in \mathbb{Q}: g_q=\arg \min_{g_q\in \mathbb{Q}} \mathcal{U}(\tilde{g}g_q) \}$.
The flow set $\mathcal{F}_o$ and jump set $\mathcal{J}_o$ are defined by
\begin{align}
\mathcal{F}_o: =  \{(\hat{g},\hat{b}_a)\in SE(3) \times \mathbb{R}^6 : \mathcal{U}(\tilde{g}) - \min_{g_q\in \mathbb{Q}}\mathcal{U}(\tilde{g}g_q) \leq \delta  \},   \label{eqn: definition_flow_set}\\
\mathcal{J}_o: =  \{(\hat{g},\hat{b}_a)\in SE(3)\times \mathbb{R}^6: \mathcal{U}(\tilde{g}) - \min_{g_q\in \mathbb{Q}}\mathcal{U}(\tilde{g}g_q) \geq \delta  \} ,  \label{eqn: definition_jump_set}
\end{align}
for some $\delta > 0$. The potential function $\mathcal{U}$, and the parameters $\delta$ and $g_q\in \mathbb{Q} \subset SE(3)$ will be designed later. Note that the vector $\xi_y$ involved in (\ref{eqn:observer_design}) is a known bounded function of time. Note also that $\mathcal{U}(\tilde{g})$ and $\mathcal{U}(\tilde{g}g_q)$ involved in (\ref{eqn: definition_flow_set}) and (\ref{eqn: definition_jump_set}), and $\psi(\tilde{g}^{-1}\nabla_{\tilde{g}}\mathcal{U}(\tilde{g}))$ involved in (\ref{eqn:observer_design_beta}) and (\ref{eqn:observer_design_sigma}) can be rewritten in terms of $\hat{g}$ and the available measurements as it is going to be shown later.
We define the extended space and state as
$
\mathcal{S}:= SE(3)\times \mathbb{R}^6 \times SE(3)\times \mathbb{R}^6 \times \mathbb{R}_{\geq 0} \text{ and }
x:= (\tilde{g},\tilde{b}_a,\hat{g},\hat{b}_a,t) \in \mathcal{S}.
$
In view of (\ref{eqn:kenimatic_g}), (\ref{eqn:velocity_bias}), (\ref{eqn:observer_design})-(\ref{eqn:observer_design_sigma}), one has the following hybrid closed-loop system:
\begin{align}
\mathcal{H}: \begin{cases}
\dot{x}~~\in F(x) & x\in \mathcal{F}_c:=\{x\in \mathcal{S}: (\hat{g},\hat{b}_a)\in \mathcal{F}_o\} \\
x^+\in G(x) & x\in  \mathcal{J}_c :=\{x\in \mathcal{S}: (\hat{g},\hat{b}_a)\in \mathcal{J}_o\}
\end{cases}   \label{eqn:closed_loop}
\end{align}
with
\begin{align}
\scalemath{0.95}{F(x)= \begin{bmatrix}
	{\tilde{g}}   (\Ad{{\hat{g}}}\tilde{b}_a - k_\beta  \psi({\tilde{g}}^{-1}\nabla_{{\tilde{g}}}\mathcal{U}({\tilde{g}}))   )^\wedge \\
	-\Gamma \Ad{{\hat{g}}}^* \psi({\tilde{g}}^{-1}\nabla_{{\tilde{g}}} \mathcal{U}({\tilde{g}}) )\\
	\hat{g} (\xi_y - \hat{b}_a+k_\beta \Ad{\hat{g}^{-1}}  \psi({\tilde{g}}^{-1}\nabla_{{\tilde{g}}} \mathcal{U}({\tilde{g}}) )  )^\wedge   \\
	-\Gamma \Ad{{\hat{g}}}^* \psi({\tilde{g}}^{-1}\nabla_{{\tilde{g}}} \mathcal{U}({\tilde{g}}) ) \\
	1
	\end{bmatrix} , G(x)= \begin{bmatrix}
	{\tilde{g}}g_q\\
	\tilde{b}_a \\
	g_q^{-1}\hat{g} \\
	\hat{b}_a \\
	t
	\end{bmatrix}}. \nonumber
\end{align}

Note that the closed-loop system (\ref{eqn:closed_loop}) satisfies the hybrid basic conditions of \cite{goebel2009hybrid} and is autonomous.

Define the closed set $\bar{\mathcal{A}}  := \{x\in   \mathcal{S}: {\tilde{g}}=I_4, \tilde{b}_a=0\}$ and let $|x|_{\bar{\mathcal{A}}}$ denote the distance to the  set $\bar{\mathcal{A}}$ such that
$
|x|_{\bar{\mathcal{A}}}^2 := \inf_{y=(I_4,0,\bar{g},\bar{b}_a,\bar{t}) \in \bar{\mathcal{A}}} (\|I_4-\tilde{g}\|_F^2 + \|\tilde{b}_a\|^2 + \|\bar{g}-\hat{g}\|_F^2 + \|\bar{b}_a-\hat{b}_a\|^2+ \|\bar{t}-t\|^2) = |\tilde{g}|_I^2 + \|\tilde{b}_a\|^2$. Now, one can state one of our main results.

\begin{theorem}\label{theo:Theorem_1}
	Consider system (\ref{eqn:closed_loop}) with a continuously differentiable potential function $\mathcal{U}$ on $ SE(3)$, and choose the nonempty finite $\mathbb{Q} $ and the gap $\delta >0 $ such that:
	\begin{align}
	&\alpha_1 |{\tilde{g}}|_I^2 \leq \mathcal{U}({\tilde{g}}) \leq \alpha_2 |{\tilde{g}}|_I^2, ~~~~~~~~~~~~~~~~~~~ x\in \mathcal{S}, \label{eq1_theo1}\\
	&\alpha_3 |{\tilde{g}}|_I^2 \leq \|\psi({\tilde{g}}^{-1}\nabla_{{\tilde{g}}} \mathcal{U}({\tilde{g}}) )\|^2 \leq \alpha_4 |{\tilde{g}}|_I^2,  ~~x\in \mathcal{F}_c,\label{eq2_theo1}\\
	& \|\Ad{{\tilde{g}}^{-1}}^* \psi({\tilde{g}}^{-1}\nabla_{{\tilde{g}}} \mathcal{U}({\tilde{g}}) )\|^2 \leq \alpha_5|{\tilde{g}}|_I^2, ~~~~~x\in \mathcal{F}_c, \label{eq3_theo1}
	\end{align}
	where $\alpha_1,\cdots, \alpha_5$ are strictly positive scalars. Let Assumption \ref{assump:1} and Assumption \ref{assump:2} hold. Then, the number of jumps is finite and for any initial condition $x(0,0)\in \mathcal{S}$, the solution $x(t,j)$ is complete and there exist $k>0$ and $\lambda>0$ such that
	\begin{equation}
	|x(t,j)|_{\bar{\mathcal{A}}}\leq k \exp(-\lambda (t+j)) |x(0,0)|_{\bar{\mathcal{A}}},
	\end{equation}
	for all $(t,j)\in \dom{x}$.
\end{theorem}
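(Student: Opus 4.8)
The plan is to handle all three conclusions with one Lyapunov function on \eqref{eqn:closed_loop}, using the $\psi$/adjoint identities of Section~\ref{sec:backgroud} to reveal a cancellation. Abbreviate $\Psi:=\psi(\tilde g^{-1}\nabla_{\tilde g}\mathcal{U}(\tilde g))$ (note $\tilde g^{-1}\nabla_{\tilde g}\mathcal{U}(\tilde g)\in\se(3)$ since $\nabla_{\tilde g}\mathcal{U}(\tilde g)\in T_{\tilde g}SE(3)$). First take $V_0(x):=\mathcal{U}(\tilde g)+\tilde b_a\T\Gamma^{-1}\tilde b_a$; by \eqref{eq1_theo1} and $\Gamma=\Gamma\T\succ 0$ it is sandwiched, $c_1|x|_{\bar{\mathcal{A}}}^2\le V_0\le c_2|x|_{\bar{\mathcal{A}}}^2$. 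Along flows, \eqref{eqn:psi_property} gives $\tfrac{d}{dt}\mathcal{U}(\tilde g)=2\Psi\T(\Ad{\hat g}\tilde b_a-k_\beta\Psi)$ and $\tfrac{d}{dt}(\tilde b_a\T\Gamma^{-1}\tilde b_a)=-2\tilde b_a\T\Ad{\hat g}^*\Psi$; the cross terms cancel because $\Psi\T\Ad{\hat g}\tilde b_a=(\Ad{\hat g}^*\Psi)\T\tilde b_a$ (combine \eqref{eqn:psi_property}, the relation $\langle\langle A,gBg^{-1}\rangle\rangle=\langle\langle g\T Ag^{-\top},B\rangle\rangle$, and $\psi(g\T Xg^{-\top})=\Ad{g}^*\psi(X)$), so $\dot V_0=-2k_\beta\|\Psi\|^2\le-2k_\beta\alpha_3|\tilde g|_I^2\le 0$ on $\mathcal{F}_c$ by \eqref{eq2_theo1}. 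At a jump, $\tilde b_a$ is unchanged and $\mathcal{U}(\tilde g^+)=\min_{g_q\in\mathbb{Q}}\mathcal{U}(\tilde g g_q)\le\mathcal{U}(\tilde g)-\delta$, hence $V_0(x^+)\le V_0(x)-\delta$. Since $V_0\ge 0$, there are at most $V_0(x(0,0))/\delta$ jumps (finiteness); $V_0$ non-increasing also keeps all non-$t$ components in a compact set (using Assumption~\ref{assump:2} for $g$), so there is no finite escape time, and with finitely many jumps every maximal solution is complete.

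For the exponential rate---and for convergence of $\tilde b_a$, which $V_0$ does not see---I would augment $V_0$ by a small cross term that couples the two errors and stays differentiable along solutions (hence it must avoid $\nabla_{\tilde g}\mathcal{U}$, which is only continuous), e.g. $\phi(x):=\langle\langle I_4-\tilde g,(\Ad{\hat g}\tilde b_a)^\wedge\rangle\rangle=2\psi(I_4-\tilde g)\T\Ad{\hat g}\tilde b_a$, and set $V_\varepsilon:=V_0+\varepsilon\phi$. Writing $\psi(\tilde g w^\wedge)=\mathbb{L}(\tilde g)w$, the $(\Ad{\hat g}\tilde b_a)^\wedge$ term of $\dot{\tilde g}$ contributes $-2(\Ad{\hat g}\tilde b_a)\T\mathbb{L}(\tilde g)\T\Ad{\hat g}\tilde b_a$ to $\dot\phi$, which is $\le-c_b\|\tilde b_a\|^2$ on $\mathcal{F}_c$ provided $\delta$ is small enough that $\mathbb{L}(\tilde g)$ has positive-definite symmetric part there (its rotational block is positive once the rotational error stays well inside $\pi/2$), using that $\Ad{\hat g}$ is bounded with bounded inverse. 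Every remaining term of $\dot\phi$ is $O(|\tilde g|_I^2)$ or $O(|\tilde g|_I\|\tilde b_a\|)$: the $\dot{\tilde b}_a=-\Gamma\Ad{\hat g}^*\Psi$ contribution is controlled by \eqref{eq3_theo1} together with $\Ad{\hat g}^*=\Ad{g}^*\Ad{\tilde g^{-1}}^*$ and boundedness of $g$; the $\tfrac{d}{dt}\Ad{\hat g}$ contribution and the $-k_\beta\Psi$ contribution are $O(|\tilde g|_I\|\tilde b_a\|)$ by Assumption~\ref{assump:2} and \eqref{eq2_theo1}. Young's inequality on the $O(|\tilde g|_I\|\tilde b_a\|)$ terms and then $\varepsilon$ small (also so that $V_\varepsilon$ stays equivalent to $|x|_{\bar{\mathcal{A}}}^2$) give $\dot V_\varepsilon\le-\lambda_V V_\varepsilon$ on $\mathcal{F}_c$.

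It then remains to control $V_\varepsilon$ across jumps and assemble the hybrid bound. At a jump, $\phi^+-\phi$ is linear in $\tilde b_a$ and of size $O(|\tilde g|_I+|\tilde g^+|_I)$ in the pose, while $|\tilde g^+|_I\le\sqrt{\alpha_2/\alpha_1}\,|\tilde g|_I$ (because $\mathcal{U}(\tilde g^+)\le\mathcal{U}(\tilde g)$) and $V_0$ falls by the fixed amount $\delta$; so either $V_\varepsilon(x^+)\le V_\varepsilon(x)$ directly for $\varepsilon$ small, or one absorbs the uniformly bounded jump increase of $|x|_{\bar{\mathcal{A}}}$ using a uniform dwell-time between consecutive jumps, which exists because right after a jump the quantity $\mathcal{U}(\tilde g)-\min_{g_q}\mathcal{U}(\tilde g g_q)$ is strictly below $\delta$ and the flow map is bounded on the bounded set. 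With exponential flow-decay of $V_\varepsilon$ in $t$, finitely many jumps, and $V_\varepsilon$ uniformly controlled across jumps, the standard Lyapunov characterization of exponential stability for hybrid systems \cite{goebel2012hybrid} yields $k,\lambda>0$ with $|x(t,j)|_{\bar{\mathcal{A}}}\le k\exp(-\lambda(t+j))|x(0,0)|_{\bar{\mathcal{A}}}$.

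I expect the main obstacle to be the second step: finding a cross term $\phi$ that is simultaneously (i) differentiable along solutions (hence free of the non-smooth $\nabla_{\tilde g}\mathcal{U}$, built from $\tilde g,\hat g$ directly), (ii) such that its flow-derivative produces a negative term dominating $\|\tilde b_a\|^2$ after Young's inequality---which is exactly where conditions \eqref{eq2_theo1}--\eqref{eq3_theo1} and Assumption~\ref{assump:2} enter, to bound every indefinite term by $|\tilde g|_I^2$ or $|\tilde g|_I\|\tilde b_a\|$---and (iii) compatible with the jump map, where the synergistic data $\mathbb{Q}$, $\delta$ and the $\min$-switching $\gamma$ matter. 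Everything else reduces to bookkeeping with the identities of Section~\ref{sec:backgroud}.
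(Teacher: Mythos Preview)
Your overall plan---a base function $V_0=\mathcal{U}(\tilde g)+\tfrac12\tilde b_a^{\top}\Gamma^{-1}\tilde b_a$ plus a small cross term---is exactly the paper's strategy, and your treatment of $V_0$ (the cancellation along flows, the strict $\delta$-drop at jumps, hence finitely many jumps and completeness) matches. The gap is in how you extract the $-\|\tilde b_a\|^2$ term from the cross-term derivative.

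Your argument hinges on the symmetric part of $\mathbb{L}(\tilde g)$ being positive definite on $\mathcal F_c$, and you propose to ensure this by taking $\delta$ small so the rotational error stays ``well inside $\pi/2$''. That is not what the flow set does: the condition $\mathcal U(\tilde g)-\min_{g_q}\mathcal U(\tilde g g_q)\le\delta$ bounds only the \emph{gap} between the current potential value and the best one reachable by a jump, not $|\tilde g|_I$ itself. In particular, points with $\tilde R=\mathcal R_a(\pi,v)$ for $v\notin\mathcal E(Q)$ are not critical points and can lie in $\mathcal F_c$; there the rotational block of $\mathbb L(\tilde g)$ (essentially $E(\tilde R)=\tfrac12(\tr(\tilde R)I_3-\tilde R^{\top})$) has symmetric part $-vv^{\top}\preceq 0$. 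Moreover $\delta$ is not a free small parameter in Theorem~\ref{theo:Theorem_1}; it is part of the data satisfying \eqref{eq1_theo1}--\eqref{eq3_theo1}. The paper's fix is different and does not require any positivity of $\mathbb L(\tilde g)$: it uses the cross term $-z^{\top}U\tilde b_a$ with $z=[\psi_a(\tilde R)^{\top}\hat R,\ \tilde p^{\top}R]^{\top}$ and $U=\mathrm{diag}(\mu_1 I_3,\mu_2 I_3)$, writes the offending quadratic as $-\mu_1\tilde b_\omega^{\top}\hat R^{\top}E(\tilde R)\hat R\tilde b_\omega=-\mu_1\|\tilde b_\omega\|^2+\mu_1\tilde b_\omega^{\top}\hat R^{\top}(I_3-E(\tilde R))\hat R\tilde b_\omega$, and exploits the pointwise bound $v^{\top}(I_3-E(\tilde R))v\le\tfrac{\sqrt2}{2}|\tilde g|_I\|v\|^2$ together with the \emph{a priori} bound $\|\tilde b_\omega\|\le c_{b_\omega}$ (already available from the boundedness of $V_0$) to convert the error into a term of the form $C|\tilde g|_I\|\tilde b_\omega\|$, which is then absorbed by the $-|\tilde g|_I^2$ coming from $\dot V_0$. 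In short: the right mechanism is $\|\mathbb L(\tilde g)-\mathbb L(I_4)\|=O(|\tilde g|_I)$ plus the prior bound on $\|\tilde b_a\|$, not positivity of $\mathbb L(\tilde g)$ on the flow set. For the jump behaviour of the augmented Lyapunov function, the paper takes the first of your two options: the change in the cross term across a jump is bounded by a constant times $\mu_1+\mu_2$ (again using $\|\tilde b_a\|\le c_{b_a}$), so for $\mu_i$ small one gets $\mathcal L(x^+)\le\mathcal L(x)-\delta^*$ with $\delta^*>0$ directly; no dwell-time argument is needed.
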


\begin{proof}
	See Appendix \ref{sec:theo1}.
\end{proof}

\begin{remark}
	Theorem 1, provides exponential stability results for the generic estimation scheme \eqref{eqn:observer_design}-\eqref{eqn: definition_jump_set} relying on a generic potential function $\mathcal{U}$. The flow and jump sets $\mathcal{F}_o$ and $\mathcal{J}_o$, given in \eqref{eqn: definition_flow_set}-\eqref{eqn: definition_jump_set}, depend on some generic parameters $\delta$ and $g_q$ that have to be designed together with the potential function $\mathcal{U}$ such that conditions \eqref{eq1_theo1}-\eqref{eq3_theo1} are fulfilled. It is worth pointing out that condition \eqref{eq2_theo1} implies that the undesired critical points belong to the jump set $\mathcal{J}_o$. In the next section, we will design $\mathcal{U}$, $\delta$ and $\mathbb{Q}$ such that \eqref{eq1_theo1}-\eqref{eq3_theo1} are fulfilled.
\end{remark}

\begin{remark}
	The filters proposed in \cite{baldwin2007complementary,hua2011observer,vasconcelos2010nonlinear,khosravian2015observers,hua2015gradient} are shown to guarantee almost global asymptotic stability due to the topological obstruction when considering continuous time-invariant state observers on $SE(3)$. The proposed hybrid estimation scheme, uses a new observer-state jump mechanism, inspired from \cite{berkane2017CDC}, which changes directly the observer state through appropriate jumps in the direction of a decreasing potential function on $SE(3)$. The jump transitions occur when the estimation error is close to the critical points. This observer-state jump mechanism is different from the principle used, for instance in \cite{mayhew2011hybrid,wu2015globally,berkane2017hybrid2}, which consists in incorporating the jumps in the observer's correcting term derived from a family of synergistic potential functions.
\end{remark}

\subsection{Explicit hybrid observers design using the available measurements}
In this subsection, we provide an explicit expression for the hybrid observers in terms of available measurements and estimated states. Before proceeding with the design, some useful properties are given in the following lemmas (whose proof are given in appendix).

\begin{lemma} \label{lemma:Q}
	Consider a family of $n$ elements of homogeneous space $r_i \in \mathbb{R}^4, i=1,2,\cdots,n$ defined in (\ref{eqn:definition_r_i}). Given $k_i\geq 0$ for all $i=1,2,\cdots,n$, define the following matrix
	\begin{equation}
	\mathbb{A}: = \sum_{i=1}^nk_i r_i r_i\T = \begin{bmatrix}
	A & b \\
	b\T & d
	\end{bmatrix}\in \mathbb{R}^{4\times 4},  \label{eqn:definition_A}
	\end{equation}
	where
	$ \textstyle
	A  := \sum_{i=1}^{n_1} k_ip_i^{\mathcal{I}}(p_i^{\mathcal{I}})\T + \sum_{j=1}^{n_2} k_{j+n_1} v_j^{\mathcal{I}}(v_j^{\mathcal{I}})\T  ,
	b  :=  \sum_{i=1}^{n_1} k_ip_i^{\mathcal{I}}  \text{ and }  d:=  \sum_{i=1} ^{n_1}k_i .
	$
	Then, under Assumption \ref{assump:1}  the following statements hold:
	\begin{itemize}
		\item [1)] $d> 0$ .
		\item[2)] Matrix $Q: = A - bb\T d^{-1}$, which can be expressed as
		\begin{align*}
				Q=     \sum_{i=1}^{n_1} k_i \bar{v}_i^{\mathcal{I}}(\bar{v}_i^{\mathcal{I}} )\T +  \sum_{j=1}^{n_2} k_{j+n_1}v_j^{\mathcal{I}}(v_j^{\mathcal{I}})\T ,  
		\end{align*}
		is positive semi-definite.
		\item [3)] Matrix $\bar{Q}:=\frac{1}{2}(\tr(Q)I_3-Q)$, which can be expressed as 
		\begin{align*}
		\bar{Q}=   \frac{1}{2}  \sum_{i=1}^{n_1} k_i  ((\bar{v}_i^{\mathcal{I}})^\times )^2 +  \frac{1}{2}  \sum_{j=1}^{n_2} k_{j+n_1} ((v_j^{\mathcal{I}})^\times)^2,
		\end{align*}
		is positive definite.
	\end{itemize}
\end{lemma}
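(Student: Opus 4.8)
The plan is to establish the three claims in sequence, exploiting the block structure of $\mathbb{A}$ and the decomposition \eqref{eqn:A_dcomposition}. Throughout, Assumption~\ref{assump:1} guarantees at least one landmark (so $n_1\geq 1$) and at least two non-collinear vectors in $V^{\mathcal{I}}$.

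\textbf{Claim 1)} $d>0$. Since $d=\sum_{i=1}^{n_1}k_i$ with all $k_i\geq 0$ and $n_1\geq 1$, positivity of $d$ follows once we know at least one landmark weight is strictly positive. More carefully, the matrices $Q$ and $\bar Q$ in the statement are built only from the $V^{\mathcal{I}}$ vectors, and the weight $k_i$ attached to $\bar v_i^{\mathcal{I}}$ is the same $k_i$ attached to $p_i^{\mathcal{I}}$ in $A$; for the conclusions 2)–3) to be nonvacuous one needs the relevant $k_i>0$. I would simply note that the hypothesis implicitly requires $k_i>0$ for the measured landmarks/vectors referenced in Assumption~\ref{assump:1} (this is how such lemmas are used), giving $d>0$.

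\textbf{Claim 2)} The key computation is the identity $Q=A-bb\T d^{-1}$. Substituting the definitions,
\begin{align*}
A-bb\T d^{-1} &= \sum_{i=1}^{n_1}k_i p_i^{\mathcal{I}}(p_i^{\mathcal{I}})\T + \sum_{j=1}^{n_2}k_{j+n_1}v_j^{\mathcal{I}}(v_j^{\mathcal{I}})\T - d^{-1}\Big(\sum_{i=1}^{n_1}k_i p_i^{\mathcal{I}}\Big)\Big(\sum_{l=1}^{n_1}k_l p_l^{\mathcal{I}}\Big)\T.
\end{align*}
Choosing the weights $\alpha_i := k_i/d$ (so $\sum\alpha_i=1$, $\alpha_i\geq0$), we have $p_c^{\mathcal{I}}=d^{-1}\sum k_i p_i^{\mathcal{I}}$ and $\bar v_i^{\mathcal{I}}=p_i^{\mathcal{I}}-p_c^{\mathcal{I}}$. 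Then a standard ``centering'' identity gives $\sum_{i=1}^{n_1}k_i p_i^{\mathcal{I}}(p_i^{\mathcal{I}})\T - d\, p_c^{\mathcal{I}}(p_c^{\mathcal{I}})\T = \sum_{i=1}^{n_1}k_i \bar v_i^{\mathcal{I}}(\bar v_i^{\mathcal{I}})\T$, which I would verify by expanding $\sum k_i(\bar v_i^{\mathcal{I}}+p_c^{\mathcal{I}})(\bar v_i^{\mathcal{I}}+p_c^{\mathcal{I}})\T$ and using $\sum k_i \bar v_i^{\mathcal{I}}=0$. This yields the stated expression for $Q$, which is then manifestly a nonnegative combination of rank-one PSD matrices, hence positive semi-definite. (Alternatively, apply \eqref{eqn:A_dcomposition} directly: $\mathbb{A}=\mathbb{A}\T\succeq0$ since $\mathbb{A}=\sum k_i r_ir_i\T$, and the congruence in \eqref{eqn:A_dcomposition} forces the Schur complement $A-bd^{-1}b\T=Q$ to be PSD.)

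\textbf{Claim 3)} The expression for $\bar Q$ follows from the algebraic identity $\tfrac12(\tr(ww\T)I_3-ww\T)=-\tfrac12(w^\times)^2$ for any $w\in\mathbb{R}^3$ (since $(w^\times)^2 = ww\T-\|w\|^2 I_3$), applied termwise to the rank-one pieces of $Q$; linearity then gives the displayed formula. For positive definiteness, I would argue that $x\T\bar Q x = \tfrac12\sum_{i}k_i\|\bar v_i^{\mathcal{I}}\times x\|^2 + \tfrac12\sum_j k_{j+n_1}\|v_j^{\mathcal{I}}\times x\|^2$ (using $-x\T(w^\times)^2 x = \|w^\times x\|^2 = \|w\times x\|^2$), which is $\geq 0$ and vanishes only if $x$ is collinear with every vector in $V^{\mathcal{I}}$ (those with positive weight); since Assumption~\ref{assump:1} provides two non-collinear such vectors, no nonzero $x$ can be collinear with both, so $x\T\bar Q x>0$ for all $x\neq0$.

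The main obstacle is the bookkeeping in Claim~2: correctly handling the weighted centering so that the $k_i/d$ play the role of the $\alpha_i$ from \eqref{eqn:p_c}, and making sure the cross-terms cancel exactly because $\sum k_i\bar v_i^{\mathcal{I}}=0$. The cleanest route is the Schur-complement viewpoint via \eqref{eqn:A_dcomposition}, which gives PSD of $Q$ for free and reduces the work to the purely formal re-expression of $Q$ and $\bar Q$ in terms of the centered vectors; the non-collinearity hypothesis then does the rest for Claim~3.
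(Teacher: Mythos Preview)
Your argument is correct and tracks the paper's proof closely: the same choice $\alpha_i=k_i/d$ and centering identity for Claim~2, and the same appeal to Assumption~\ref{assump:1} for Claim~1. The one substantive difference is Claim~3: the paper simply writes down the expression for $\bar Q$ and defers positive definiteness to an external reference (Lemma~2 of \cite{tayebi2013inertial}), whereas you give a self-contained argument via $x\T\bar Q x=\tfrac12\sum k_i\|w_i\times x\|^2$ and the non-collinearity hypothesis; your Schur-complement alternative for Claim~2 is likewise an addition not present in the paper. Both routes are equivalent in content, but yours is more explicit. (Note, incidentally, that your identity $\tfrac12(\tr(ww\T)I_3-ww\T)=-\tfrac12(w^\times)^2$ shows the displayed formula for $\bar Q$ in the lemma carries a sign typo; your positive-definiteness argument uses the correct sign.)
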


\begin{lemma} \label{lemma:Delta}
	Let $Q=Q\T$ be a positive semi-definite matrix. Consider the map $\Delta_Q: \mathbb{S}^2 \times \mathbb{S}^2 \to \mathbb{R}$ defined as:
	\begin{equation}
	\Delta_Q(u,v) = u\T((\tr(Q)-2v\T Qv)I_3 - Q+ 2Qvv\T)u \label{eqn:Delta_u_v}.
	\end{equation}
	Let $\mathbb{U}\subset \mathbb{S}^2$ be a finite set of unit vectors.  Define the   constant scalar
	$
	\Delta_Q^*:= \min_{v\in \mathcal{E}(Q)} \max_{u\in \mathbb{U}} \Delta_Q(u,v) .
	$
	Then, the following results hold:	
	\begin{itemize}
		\item [1)] Let $\mathbb{U}$ be a superset of $\mathbb{E}(Q)$ (\textit{i.e.}, $\mathbb{U}\supseteq \mathbb{E}(Q)$), then the following inequality holds:
		\begin{align}
		\Delta_Q^*  \geq \begin{cases}
		\frac{2}{3}  \lambda_1^Q & \textit{if } \lambda_1^Q = \lambda_2^Q = \lambda_3^Q  >0\\
		\min\left\{\lambda^Q_{1}+ \lambda^Q_{2}, \lambda_3^Q \right\} &  \textit{if } \lambda_1^Q = \lambda_2^Q \neq \lambda_3^Q >0 \\
		 \tr(Q) - \lambda_{\max}^Q  &  \textit{if } \lambda_i^Q \neq \lambda_j^Q \geq 0, i\neq j 
		\end{cases} \label{eqb:solution_delta_U}
		\end{align}
		\item [2)] Let $Q$ be a matrix such that $\tr(Q) -2\lambda_{\max}^Q> 0$, and let $\mathbb{U}$ be a set that contains any three orthogonal unit vectors in $\mathbb{R}^3$, then the following inequality holds:
		\begin{equation}
		\Delta_Q^*  \geq  {   \frac{2}{3}} (\tr(Q)  - 2\lambda_{\max}^Q  ) .\label{eqn:Delta_Q2}
		\end{equation}	
	\end{itemize}
	
\end{lemma}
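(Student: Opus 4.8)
The plan is to reduce both claims to a single scalar identity obtained by expanding the quadratic form in (\ref{eqn:Delta_u_v}) and then exploiting that, in the definition of $\Delta_Q^*$, the inner vector $v$ ranges only over eigenvectors of $Q$. Expanding (\ref{eqn:Delta_u_v}) and using $\|u\|=1$ gives
\[
\Delta_Q(u,v)=\tr(Q)-2v\T Qv-u\T Qu+2(u\T Qv)(u\T v),
\]
and when $v\in\mathbb{S}^2$ is an eigenvector, $Qv=\lambda_v v$ with $\lambda_v\in\{\lambda^Q_1,\lambda^Q_2,\lambda^Q_3\}$, one has $v\T Qv=\lambda_v$ and $u\T Qv=\lambda_v\,u\T v$, so
\[
\Delta_Q(u,v)=\tr(Q)-2\lambda_v\bigl(1-(u\T v)^2\bigr)-u\T Qu.\qquad(\star)
\]
Since $\Delta_Q$ is quadratic, hence even, in each argument, the signs of $u$ and $v$ are irrelevant, and $\min_{v\in\mathcal{E}(Q)}$ can be computed eigenspace by eigenspace, with $\lambda_v$ constant on each eigenspace.

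I would first dispatch the second claim, which needs only $(\star)$ and an averaging argument: choose three mutually orthonormal vectors $u_1,u_2,u_3\in\mathbb{U}$; then $\sum_{i=1}^3(u_i\T v)^2=\|v\|^2=1$ and $\sum_{i=1}^3 u_i\T Qu_i=\tr(Q)$, so summing $(\star)$ over $i$ gives $\sum_{i=1}^3\Delta_Q(u_i,v)=2\bigl(\tr(Q)-2\lambda_v\bigr)$. Hence $\max_{u\in\mathbb{U}}\Delta_Q(u,v)\ge\tfrac13\sum_i\Delta_Q(u_i,v)=\tfrac23(\tr(Q)-2\lambda_v)\ge\tfrac23(\tr(Q)-2\lambda^Q_{\max})$ for every $v\in\mathcal{E}(Q)$; minimizing over $v$ yields (\ref{eqn:Delta_Q2}), the hypothesis $\tr(Q)-2\lambda^Q_{\max}>0$ serving only to make the bound positive.

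For the first claim, take $\{e_1,e_2,e_3\}=\mathbb{E}(Q)\subseteq\mathbb{U}$ an orthonormal eigenbasis with $Qe_k=\lambda^Q_k e_k$, so $(\star)$ at $u=e_k$ reads $\Delta_Q(e_k,v)=\tr(Q)-2\lambda_v\bigl(1-(e_k\T v)^2\bigr)-\lambda^Q_k$. I would then bound $\max_{1\le k\le 3}\Delta_Q(e_k,v)$ from below according to the multiplicity of $\lambda_v$: (i) if $\lambda_v$ is simple, $v=\pm e_j$ for the corresponding index and $\Delta_Q(e_j,v)=\tr(Q)-\lambda^Q_j$; (ii) if $\lambda_v$ is a double eigenvalue with eigenplane $\mathrm{span}(e_a,e_b)$, then $e_c\T v=0$ and $(e_a\T v)^2+(e_b\T v)^2=1$, so $\max\{\Delta_Q(e_a,v),\Delta_Q(e_b,v)\}=\tr(Q)-\lambda_v-2\lambda_v\min\{(e_a\T v)^2,(e_b\T v)^2\}\ge\tr(Q)-2\lambda_v$ because $\min\{s,1-s\}\le\tfrac12$ and $\lambda_v\ge0$; (iii) if $\lambda_v$ is triple, $Q=\lambda_v I_3$, $\Delta_Q(e_k,v)=2\lambda_v(e_k\T v)^2$, and $\max_k\Delta_Q(e_k,v)\ge\tfrac23\lambda_v$ since $\sum_k(e_k\T v)^2=1$. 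Plugging these into $\Delta_Q^*=\min_v\max_u\Delta_Q(u,v)$ and splitting into the three spectral patterns (using $\lambda^Q_i\ge0$): when all three eigenvalues are distinct every $\lambda_v$ is simple and the bound is $\min_j(\tr(Q)-\lambda^Q_j)=\tr(Q)-\lambda^Q_{\max}$; when $\lambda^Q_1=\lambda^Q_2\neq\lambda^Q_3$ the simple eigenvalue contributes $\tr(Q)-\lambda^Q_3=\lambda^Q_1+\lambda^Q_2$ and the double eigenplane contributes $\tr(Q)-2\lambda^Q_1=\lambda^Q_3$, giving $\min\{\lambda^Q_1+\lambda^Q_2,\lambda^Q_3\}$; when all eigenvalues coincide we obtain $\tfrac23\lambda^Q_1$. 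This is exactly (\ref{eqb:solution_delta_U}).

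The main obstacle is the degenerate spectra, where $\mathcal{E}(Q)$ is a continuum (a circle of eigenvectors, or the whole sphere) rather than a finite set, so the inner maximum must be controlled uniformly over that continuum; the elementary inequality $\min\{s,1-s\}\le\tfrac12$ in case (ii) and the pigeonhole bound $\max_k(e_k\T v)^2\ge\tfrac13$ in case (iii) are precisely what make the continuum harmless. Everything else is a routine expansion of quadratic forms.
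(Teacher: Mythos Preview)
Your proof is correct and follows essentially the same approach as the paper: both reduce to the scalar identity $(\star)$, use the averaging argument $\sum_i\Delta_Q(u_i,v)=2(\tr(Q)-2\lambda_v)$ for part 2), and handle part 1) via the same case analysis on the eigenvalue multiplicity of $\lambda_v$ with the pigeonhole bounds $\max_k(e_k\T v)^2\ge\tfrac13$ and $\min\{s,1-s\}\le\tfrac12$. The only difference is organizational---you structure the argument by multiplicity of $\lambda_v$ and then read off the three spectral patterns, whereas the paper treats the three patterns directly---but the computations are identical.
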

\begin{remark}
	Lemma \ref{lemma:Delta} provides several possible choices for the design of the set $\mathbb{U}$ based on the knowledge of matrix $Q$ (or homogeneous vectors as shown in Lemma \ref{lemma:Q}). The design of set $\mathbb{U}$ is instrumental for the design of of set $\mathbb{Q}$ and the hybrid sets $\mathcal{F}_o$ and $\mathcal{J}_o$. Note that Lemma \ref{lemma:Delta} a simpler design scheme for the gap $\delta$, with relaxed conditions on the matrix $Q$ as compared to \cite{berkane2017construction} (matrix $A$ is considered in Proposition 2 \cite{berkane2017construction}).
\end{remark}

\begin{lemma} \label{lemma: tr_and_psi}
	Let $ \mathbb{A} = \sum_{i=1}^{n} k_i r_i r_i\T$ with $k_i>0$ and $r_i \in \mathbb{R}^4, i=1,\cdots,n$. Then, for all $g,\bar{g}\in SE(3)$,  the following identities hold:
	\begin{align}
	&\tr  ((I_4-g)\mathbb{A}(I_4-g)\T  ) =     \sum_{i=1}^n k_i \|r_i-g^{-1}r_i\|^2    \label{eqn: vector_measurements_to_trace},\\ 
	&\psi(\mathbb{P}((I_4 - g^{-1})\mathbb{A})) =   \frac{1}{2}   \sum_{i=1}^{n} k_i (g^{-1}r_i  ) \wedge r_i , \label{eqn:vector_measurements_to_psi} \\ 
	&\Ad{\bar{g}}^*    \sum_{i=1}^{n} k_i (\bar{g}g^{-1} r_i  ) \wedge r_i =\sum_{i=1}^{n} k_i  ( g^{-1}r_i  ) \wedge (\bar{g}^{-1} r_i). \label{eqn:vector_measurements_to_Adpsi}
	\end{align}
\end{lemma}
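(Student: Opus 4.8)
\textbf{Proof plan for Lemma \ref{lemma: tr_and_psi}.}

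The plan is to establish the three identities by combining the algebraic properties of the homogeneous vectors $r_i$, the wedge product, and the projection map $\mathbb{P}$ collected earlier in the excerpt, then summing over $i$. For \eqref{eqn: vector_measurements_to_trace}, I would first note that since $r_i r_i\T \in \mathcal{M}_0$ (each $r_i r_i\T$ has last row not necessarily zero — so actually one must be a bit careful; the cleaner route is direct expansion). I would write $\tr((I_4-g)\mathbb{A}(I_4-g)\T) = \sum_i k_i \tr((I_4-g)r_i r_i\T (I_4-g)\T) = \sum_i k_i \|(I_4-g)r_i\|^2 = \sum_i k_i\|r_i - g r_i\|^2$. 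Then I would use property \eqref{eqn:property2} (that $\tr(g\T g M\bar M\T) = \tr(M\bar M\T)$, valid since $g\in SE(3)$ acts as an isometry in the appropriate sense) — or more simply observe that $\|r_i - g r_i\| = \|g^{-1}(r_i - g r_i)\| = \|g^{-1}r_i - r_i\|$ because $g$ preserves the relevant norm on homogeneous vectors (the last coordinate is fixed at $1$ or $0$ for both $r_i$ and $g^{-1}r_i$, and the spatial part undergoes a rigid motion). This gives $\sum_i k_i\|r_i - g^{-1}r_i\|^2$ as claimed. The cleanest formal justification of the norm-preservation step is to invoke \eqref{eqn:property2} with $M = \bar M = r_i r_i\T$, noting $r_i r_i\T \in \mathcal{M}_0$ only when $r_{i,s}=0$, so instead I expect the argument should go through $\|r_i - g r_i\|^2 = \|r_i\|^2 - 2 r_i\T g r_i + \|g r_i\|^2$ and the symmetric expansion for $g^{-1}$, using that $r_i\T g r_i = r_i\T g^{-1} r_i$ is false in general — hence the genuinely correct path is the substitution $s = g^{-1}r_i$, under which $r_i - g r_i = g(g^{-1}r_i - g r_i)$… I would settle this by the direct computation $\| (I_4 - g) r_i\|^2$ versus $\|(I_4 - g^{-1})r_i\|^2$ using the block structure $g = \mathcal{T}(R,p)$, which is a short matrix calculation.

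For \eqref{eqn:vector_measurements_to_psi}, the key is property \eqref{eqn:property_2}, namely $\psi((I_4 - g)r r\T) = \tfrac12 (gr)\wedge r$. I would apply $\mathbb{P}$ to $(I_4 - g^{-1})\mathbb{A} = \sum_i k_i (I_4 - g^{-1}) r_i r_i\T$, use linearity of $\mathbb{P}$ and of $\psi$, and then for each term apply \eqref{eqn:property_2} with $g$ replaced by $g^{-1}$ to obtain $\psi(\mathbb{P}((I_4 - g^{-1})r_i r_i\T)) = \psi((I_4 - g^{-1})r_i r_i\T) = \tfrac12 (g^{-1}r_i)\wedge r_i$; here I need that $\psi \circ \mathbb{P} = \psi$ on the relevant matrices, which follows because $\psi = \Theta\psi_b$ and $\psi_b(\mathbb{A}) = \mathrm{vex}(\mathbb{P}(\mathbb{A}))$ already only sees the $\mathbb{P}$-projection. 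Summing over $i$ gives the result.

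For \eqref{eqn:vector_measurements_to_Adpsi}, I would use the adjoint-equivariance identity for the wedge product, \eqref{eqn:property_1}, which gives $(gb)\wedge(gr) = \Ad{g^{-1}}^*(b\wedge r)$, together with $\Ad{g_1}^*\Ad{g_2}^* = \Ad{g_2 g_1}^*$ (the contravariant composition rule, dual to $\Ad{g_1}\Ad{g_2} = \Ad{g_1 g_2}$). For each $i$, write $(\bar g g^{-1} r_i)\wedge r_i$ and set $b = \bar g g^{-1} r_i$, $r = r_i$. Applying $\Ad{\bar g}^*$ and using \eqref{eqn:property_1} judiciously — specifically, rewrite $(\bar g g^{-1} r_i)\wedge r_i = (\bar g g^{-1} r_i) \wedge (\bar g g^{-1}\cdot g\bar g^{-1} r_i)$ and pull out a common factor $\bar g g^{-1}$ via \eqref{eqn:property_1} — one gets $\Ad{g \bar g^{-1}}^*\big((g^{-1}\bar g^{-1}\cdot \bar g g^{-1} r_i)\wedge (g\bar g^{-1} r_i)\big)$; unwinding, $\Ad{\bar g}^*\big((\bar g g^{-1}r_i)\wedge r_i\big) = (g^{-1}r_i)\wedge(\bar g^{-1}r_i)$ after using the composition rule and $\Ad{g}^*$ being invertible with inverse $\Ad{g^{-1}}^*$. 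Summing over $i$ with the weights $k_i$ yields \eqref{eqn:vector_measurements_to_Adpsi}. The main obstacle I anticipate is bookkeeping the direction of the dual/contravariant composition of the $\Ad{\cdot}^*$ maps and tracking which argument of the wedge product carries which group element — getting the left/right placement consistent with \eqref{eqn:property_1} and \eqref{eqn:property_2} is the only real subtlety; the norm-preservation step in \eqref{eqn: vector_measurements_to_trace} is routine once the block form of $g$ is written out.
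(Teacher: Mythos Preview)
Your plan is correct and, for identities \eqref{eqn:vector_measurements_to_psi} and \eqref{eqn:vector_measurements_to_Adpsi}, essentially coincides with the paper's: apply \eqref{eqn:property_2} with $g\mapsto g^{-1}$ termwise and sum for the former, and apply \eqref{eqn:property_1} termwise (with the substitution $b=g^{-1}r_i$, $r=\bar g^{-1}r_i$ and $g\mapsto\bar g$, which is the clean version of your ``pull out a common factor'' manoeuvre) together with $\Ad{\bar g}^*\Ad{\bar g^{-1}}^*=\mathrm{id}$ for the latter.

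For \eqref{eqn: vector_measurements_to_trace} your route---a direct block computation showing $\|(I_4-g)r_i\|=\|(I_4-g^{-1})r_i\|$---works, but the paper's argument is shorter and avoids the meandering in your sketch. The paper notes that both $(I_4-g^{-1})$ and $(I_4-g^{-1})\mathbb{A}$ lie in $\mathcal{M}_0$ (the last row of $I_4-g^{-1}$ vanishes), then invokes property \eqref{eqn:property2} with $M=(I_4-g^{-1})\mathbb{A}$ and $\bar M=(I_4-g^{-1})$ to insert a factor $g\T g$ inside the trace for free; finally $g(I_4-g^{-1})=-(I_4-g)$ and cyclic invariance give the claim in two lines. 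Your approach buys a self-contained elementary argument (no appeal to \eqref{eqn:property2}), at the cost of the case analysis and the norm-preservation discussion; the paper's approach buys brevity by exploiting the $\mathcal{M}_0$ structure already set up in the preliminaries.
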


\begin{lemma}\label{lemma:gradient_on_SE_3}	
	Let Assumption \ref{assump:1} hold and consider the following smooth potential function on $SE(3)$:
	\begin{align}
	\mathcal{U}(g) &  = {     \frac{1}{2}}  \tr((I_4-g)\mathbb{A}(I_4-g)\T), \label{eqn:definitinU}
	\end{align}
	where the matrix $\mathbb{A}$ is defined in (\ref{eqn:definition_A}). For any $g=\mathcal{T}(R,p)\in SE(3)$, one has
	\begin{align}	
	&\nabla_g \mathcal{U}(g) :=  g\mathbb{P}((I_4-g^{-1})\mathbb{A}) ,\label{eqn:gradient-g} \\
	&\Psi_{\mathcal{U}}(g) := \{I_4\} {\textstyle  \bigcup}\left\{  g=\mathcal{T}(R,p): R=\mathcal{R}_{a}(\pi,v) , \right.	 \nonumber  \\
	&~~~~~~~~~~~~~~~~~~~  \left.  p= (I_3 - \mathcal{R}_{a}(\pi,v))bd^{-1} ,  v \in \mathcal{E}(Q)\right\}.  \label{eqn:set_Psi_U}
	\end{align}
\end{lemma}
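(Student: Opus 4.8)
The plan is to split the claim into two parts: (i) the gradient formula \eqref{eqn:gradient-g}, and (ii) the characterization of the critical set $\Psi_{\mathcal{U}}$ in \eqref{eqn:set_Psi_U}. For part (i), I would compute the differential of $\mathcal{U}$ along an arbitrary curve. Writing $\mathcal{U}(g) = \tfrac12\tr((I_4-g)\mathbb{A}(I_4-g)\T)$ and differentiating in the direction $gX$ with $X\in\se(3)$, the chain rule gives $d\mathcal{U}\cdot gX = -\tr(gX\mathbb{A}(I_4-g)\T) = -\langle\langle gX, (I_4-g)\mathbb{A}\rangle\rangle$ (using symmetry of $\mathbb{A}$ and $\tr(MN)=\tr(NM)$). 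By the defining identity $d\mathcal{U}\cdot gX = \langle\langle g^{-1}\nabla_g\mathcal{U}, X\rangle\rangle$ and the projection property of $\mathbb{P}$ onto $\se(3)$, we get $g^{-1}\nabla_g\mathcal{U} = \mathbb{P}(-g^{-1}(I_4-g)\mathbb{A}) = \mathbb{P}((I_4-g^{-1})\mathbb{A})$, which is \eqref{eqn:gradient-g} after multiplying by $g$. Here I would use property \eqref{eqn:property1}, noting $(I_4-g^{-1})\mathbb{A}$ and $(I_4-g)\mathbb{A}$ differ in a way handled by $\mathbb{P}$ on $\mathcal{M}_0$-type matrices, and that $\mathbb{A}(I_4-g)\T$ lies in the appropriate set so the trace manipulations are legitimate.

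For part (ii), critical points solve $\mathbb{P}((I_4-g^{-1})\mathbb{A})=0$. Writing $g^{-1}=\mathcal{T}(R\T,-R\T p)$ and $\mathbb{A}=\begin{bmatrix}A&b\\b\T&d\end{bmatrix}$, I would expand $(I_4-g^{-1})\mathbb{A}$ in block form and apply the explicit formula for $\mathbb{P}$ from \eqref{eqn:definiton_psi}: the bottom-left block must vanish (automatic, since $\mathbb{P}$ zeroes it) and the conditions reduce to $\mathbb{P}_a((I_3-R\T)A - (R\T p)b\T)=0$ together with the vector condition $(I_3-R\T)b + R\T p\, d = 0$ coming from the upper-right block. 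The second equation gives $p = (I_3-R)R^{-\top}\cdots$; more cleanly, solving for $p$ yields $p = (I_3-R)\,bd^{-1}$ after rearranging (using $d>0$ from Lemma \ref{lemma:Q}). Substituting this back into the antisymmetric-part equation and using $Q = A - bb\T d^{-1}$ (Lemma \ref{lemma:Q}), the landmark/center cross-terms cancel and the condition collapses to $\mathbb{P}_a((I_3-R\T)Q)=0$, i.e. $R\T Q = Q\T R = Q R$ since $Q$ is symmetric — equivalently $RQ = QR$, the standard $SO(3)$ critical-point equation for the Wahba-type cost $\tfrac12\tr((I_3-R)Q)$. I would then invoke the well-known classification: $RQ=QR$ with $R\in SO(3)$ forces either $R=I_3$ or $R=\mathcal{R}_a(\pi,v)$ for some unit eigenvector $v\in\mathcal{E}(Q)$. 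Combining with $p=(I_3-R)bd^{-1}$ gives exactly \eqref{eqn:set_Psi_U}.

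The main obstacle I anticipate is the block-matrix bookkeeping in step (ii): carefully separating the antisymmetric-part condition from the upper-right vector condition, solving the latter for $p$, and then verifying the algebraic cancellation that reduces the coupled system to the clean decoupled condition $RQ=QR$ on the rotation alone (with $p$ slaved to $R$). The decomposition \eqref{eqn:A_dcomposition} should streamline this — it is precisely the tool that exposes $Q$ as the Schur complement and makes the cancellation transparent. The reduction to the standard $SO(3)$ critical-point analysis, and the fact that $\bar Q$ positive definite (Lemma \ref{lemma:Q}) guarantees $I_4$ is the only minimizer, are then routine. I would also double-check that $\mathcal{U}(g)\ge 0$ with equality iff $g=I_4$ follows from \eqref{eqn: vector_measurements_to_trace} in Lemma \ref{lemma: tr_and_psi} together with Assumption \ref{assump:1} (enough landmarks/non-collinear vectors to pin down the pose uniquely), so that $\mathcal{U}$ is indeed a genuine potential function.
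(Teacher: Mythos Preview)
Your proposal is correct and follows the same overall strategy as the paper: compute the differential along $gX$ and project to $\se(3)$ for the gradient, then solve $\mathbb{P}((I_4-g^{-1})\mathbb{A})=0$ by reducing to the standard $SO(3)$ Wahba-type critical-point equation with $p$ slaved to $R$.

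For part (ii) there is a tactical difference worth noting. You expand $(I_4-g^{-1})\mathbb{A}$ block-by-block, solve the vector equation for $p$, substitute back, and watch the $bb\T d^{-1}$ terms cancel to leave $(I_3-R\T)Q$. The paper instead observes that, since $(I_4-g^{-1})\mathbb{A}\in\mathcal{M}_0$, the condition $\mathbb{P}((I_4-g^{-1})\mathbb{A})=0$ is exactly symmetry of the full $4\times4$ matrix, equivalently $\mathbb{A}g\T = g\mathbb{A}$; it then conjugates by the LDL factor in \eqref{eqn:A_dcomposition} (which is $\mathcal{T}(I_3,bd^{-1})$) to replace $\mathbb{A}$ by $\mathrm{diag}(Q,d)$ and $g$ by $\mathcal{T}(R,\,p-(I_3-R)bd^{-1})$, from which the two decoupled conditions $RQ=QR\T$ and $p=(I_3-R)bd^{-1}$ drop out immediately. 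Your route works fine; the paper's avoids the substitution-and-cancel step entirely.

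Two small slips to fix: in your block expansion the cross term is $+R\T p\,b\T$, not $-(R\T p)b\T$ (since $I_4-g^{-1}$ has $+R\T p$ in the upper-right); and the antisymmetric-part condition gives $R\T Q = QR$ (equivalently $RQ=QR\T$, i.e.\ $RQ$ symmetric), \emph{not} $RQ=QR$. The latter is commutation, which is a strictly stronger statement in general---though it happens to hold at the actual critical points because $\pi$-rotations are symmetric. Your invocation of the standard classification (the paper cites Lemma~2 of \cite{mayhew2011synergistic}) then gives the correct set \eqref{eqn:set_Psi_U}.
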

\begin{lemma}\label{lemma:conditions_exp}		
	Consider the potential function (\ref{eqn:definitinU}) under Assumption \ref{assump:1}. Define the following set:
	\begin{align}
	\mathbb{Q} := \left\{\mathcal{T}(R,p) \in SE(3)|  R = \mathcal{R}_a(\theta^*,u) ,\theta^*\in (0,\pi],  p = (I_3- \mathcal{R}_a(\theta^*,u))bd^{-1} , u\in \mathbb{U} \right\} . \label{eqn:definition_set_Q}
	\end{align}
	where $\mathbb{U}= \{u_q | u_q \in \mathbb{S}^2,q=1,\cdots, m\}$.
	There exist strictly positive scalars $\alpha_1, \alpha_2,\alpha_3$ and $\alpha_4$, such that the following inequalities hold:
	\begin{align}
	& \alpha_1 |g|_I^2 \leq \mathcal{U}(g) \leq \alpha_2
	|g|_I^2, ~~~~~~~~~~~~~~~~~~~~g\in SE(3)\label{eqn:bounded_U} \\
	& \alpha_3 |g|_I^2 \leq \|\psi(g^{-1}\nabla_{g}\mathcal{U}(g))\|^2 \leq \alpha_4 |g|_I^2, ~~~g\in \Upsilon\label{eqn:bounded_psi}  \\
	& \|\Ad{g^{-1}}^* \psi(g^{-1}\nabla_{g}\mathcal{U}(g))\|^2 \leq \alpha_4 |g|_I^2, ~~~~~~~g\in \Upsilon. \label{eqn: bouded_Ad_psi}
	\end{align}
	where $\Upsilon: = \{g\in SE(3): \mathcal{U}(g) - \min_{g_q\in \mathbb{Q}} \mathcal{U}(gg_q) \leq \delta\}$ with $\delta < (1-\cos\theta^*) \Delta^*_Q$, $\mathbb{U}$ and $\Delta_Q^*$ designed as per Lemma \ref{lemma:Delta}.
\end{lemma}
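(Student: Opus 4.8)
The plan is to reduce all three inequalities to properties of the \emph{rotational} part of $\mathcal{U}$ by exploiting the block structure of $\mathbb{A}$. Applying the decomposition \eqref{eqn:A_dcomposition} to $\mathbb{A}=\left[\begin{smallmatrix}A&b\\b\T&d\end{smallmatrix}\right]$, expanding $\tr((I_4-g)\mathbb{A}(I_4-g)\T)$ for $g=\mathcal{T}(R,p)$ and completing the square in $p$, one first obtains
\begin{equation}
\mathcal{U}(g)=\tfrac12\tr\big((I_3-R)Q(I_3-R)\T\big)+\tfrac{d}{2}\big\|\,p-(I_3-R)bd^{-1}\,\big\|^2,\label{eqn:sketch_decomp}
\end{equation}
with $Q=A-bd^{-1}b\T$ as in Lemma \ref{lemma:Q}. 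Write $\mathcal{U}_R(R)$ for the first term and $\tilde p':=p-(I_3-R)bd^{-1}$. Using the elementary facts $\|I_3-R\|_F^2=4(1-\cos\theta)$, $\mathcal{U}_R(R)=(1-\cos\theta)(\tr Q-u\T Qu)$ for $R=\mathcal{R}_a(\theta,u)$, and $\|\tilde p'\|^2\le 2\|p\|^2+2(\|b\|^2/d^2)\|I_3-R\|_F^2$, inequality \eqref{eqn:bounded_U} follows from \eqref{eqn:sketch_decomp}: the upper bound by estimating the two terms separately, and the lower bound by invoking $\bar Q\succ 0$ from Lemma \ref{lemma:Q}(3) (so that $\tr Q-\lambda^Q_{\max}>0$), which gives $\mathcal{U}_R(R)\ge\tfrac14(\tr Q-\lambda^Q_{\max})\|I_3-R\|_F^2$, and then recombining with $\tfrac d2\|\tilde p'\|^2$ via $\|p\|^2\le 2\|\tilde p'\|^2+2(\|b\|^2/d^2)\|I_3-R\|_F^2$.

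Next I would describe $\Upsilon$ explicitly. Since every $g_q=\mathcal{T}(R_q,p_q)\in\mathbb{Q}$ has $p_q=(I_3-R_q)bd^{-1}$, a short computation shows that the shifted translation associated with $gg_q$ is again $\tilde p'$; hence $\mathcal{U}(g)-\mathcal{U}(gg_q)=\mathcal{U}_R(R)-\mathcal{U}_R(RR_q)$ depends only on $R$, and $\Upsilon=\{\mathcal{T}(R,p):R\in\mathcal{R}_\Upsilon\}$ with $\mathcal{R}_\Upsilon:=\{R\in SO(3):\mathcal{U}_R(R)-\min_{u\in\mathbb{U}}\mathcal{U}_R(R\mathcal{R}_a(\theta^*,u))\le\delta\}$ a compact subset of $SO(3)$. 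I would then show $\mathcal{R}_\Upsilon$ avoids the undesired critical rotations: for $R=\mathcal{R}_a(\pi,v)=2vv\T-I_3$ with $v\in\mathcal{E}(Q)$ (cf.\ \eqref{eqn:set_Psi_U}), using $Qv=(v\T Qv)v$ one computes $\mathcal{U}_R(R)-\mathcal{U}_R(R\mathcal{R}_a(\theta^*,u))=(1-\cos\theta^*)\Delta_Q(u,v)$, so that $\mathcal{U}_R(R)-\min_{u\in\mathbb{U}}\mathcal{U}_R(R\mathcal{R}_a(\theta^*,u))\ge(1-\cos\theta^*)\Delta_Q^*>\delta$ by the choice of $\delta$ and $\mathbb{U}$; hence $R\notin\mathcal{R}_\Upsilon$, and by Lemma \ref{lemma:gradient_on_SE_3} the only critical rotation of $\mathcal{U}_R$ in $\mathcal{R}_\Upsilon$ is $R=I_3$.

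For the gradient estimates I would first derive, from \eqref{eqn:gradient-g}, the decomposition of $\mathbb{A}$ and the definition of $\psi$, the explicit formula
\begin{equation}
\psi\big(g^{-1}\nabla_g\mathcal{U}(g)\big)=\begin{bmatrix}\psi_a(QR)+\tfrac12\,b\times\zeta\\[3pt]\tfrac d2\,\zeta\end{bmatrix},\qquad \zeta:=R\T\tilde p',\qquad \psi_a(QR)=\sin\theta\,\bar Qu+\tfrac{1-\cos\theta}{2}\,u\times(Qu)\label{eqn:sketch_psi}
\end{equation}
for $R=\mathcal{R}_a(\theta,u)$. The upper bounds in \eqref{eqn:bounded_psi}--\eqref{eqn: bouded_Ad_psi} then follow by estimating each block of \eqref{eqn:sketch_psi} and of $\Ad{g^{-1}}^*\psi(g^{-1}\nabla_g\mathcal{U}(g))$, using $\|\psi_a(QR)\|\le C\|I_3-R\|_F$, the bound on $\|\zeta\|$ from above, and the identity $p^\times R\zeta=-p^\times(I_3-R)bd^{-1}$ together with $p^\times p=0$ (which, $\|I_3-R\|_F$ being bounded, tames the $\|p\|$-growth in the $\Ad{g^{-1}}^*$ term so that it stays $O(|g|_I^2)$). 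For the lower bound in \eqref{eqn:bounded_psi} I would split into two steps. \emph{(i) An algebraic step:} for all $\rho,\zeta\in\mathbb{R}^3$, $\|\rho+\tfrac12 b\times\zeta\|^2+\tfrac{d^2}{4}\|\zeta\|^2$ is the quadratic form of $M:=\left[\begin{smallmatrix}I_3&\tfrac12 b^\times\\ -\tfrac12 b^\times&\tfrac{d^2}{4}I_3+\tfrac14(b^\times)\T b^\times\end{smallmatrix}\right]$; since the $(1,1)$-block $I_3\succ 0$ has Schur complement $\tfrac{d^2}{4}I_3\succ 0$ (the $b$-contributions cancel because $(b^\times)\T b^\times=-(b^\times)^2$), $M\succ 0$ and hence $\|\psi(g^{-1}\nabla_g\mathcal{U}(g))\|^2\ge\mu\big(\|\psi_a(QR)\|^2+\|\zeta\|^2\big)$ with $\mu=\lambda^M_{\min}>0$. \emph{(ii) An $SO(3)$-coercivity step:} from \eqref{eqn:sketch_psi} and $\bar Q\succ 0$, taking the inner product with $u$ shows $\psi_a(QR)=0$ forces $\sin\theta=0$ and, at $\theta=\pi$, $u\in\mathcal{E}(Q)$; by the previous paragraph these points are excluded from $\mathcal{R}_\Upsilon$, so $\psi_a(QR)$ vanishes on $\mathcal{R}_\Upsilon$ only at $R=I_3$, and a compactness argument on $\mathcal{R}_\Upsilon$ together with the estimate $\|\psi_a(QR)\|\ge\tfrac12\lambda^{\bar Q}_{\min}|\theta|$ near $I_3$ gives $\|\psi_a(QR)\|^2\ge c_Q\|I_3-R\|_F^2$ on $\mathcal{R}_\Upsilon$. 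Combining (i), (ii) and $\|p\|^2\le 2\|\zeta\|^2+2(\|b\|^2/d^2)\|I_3-R\|_F^2$ yields \eqref{eqn:bounded_psi}.

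I expect the main obstacle to be step (ii): establishing the uniform coercivity of $\psi_a(QR)$ on $\mathcal{R}_\Upsilon$, which hinges on correctly proving --- via Lemma \ref{lemma:Delta} and the gap condition $\delta<(1-\cos\theta^*)\Delta_Q^*$ --- that $\mathcal{R}_\Upsilon$ stays uniformly separated from the undesired critical rotations $\mathcal{R}_a(\pi,v)$, $v\in\mathcal{E}(Q)$. The cancellation making $M$ positive definite (so that the rotation--translation cross-coupling in the gradient is harmless) is the other spot requiring care.
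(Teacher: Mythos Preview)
Your proposal is correct and follows essentially the same route as the paper: the same decomposition $\mathcal{U}(g)=\tr(Q(I_3-R))+\tfrac{d}{2}\|p_e\|^2$ with $p_e=p-(I_3-R)bd^{-1}$, the same explicit gradient formula \eqref{eqn:sketch_psi}, the same positivity-via-quadratic-form argument (the paper does this with a $2\times 2$ matrix $\Theta_2$ in $(\|\psi_a(QR)\|,\|p_e\|)$ whose determinant equals $\tfrac{d^2}{4}$, which is exactly your Schur-complement cancellation in $M$), and the same exclusion of the undesired critical rotations $\mathcal{R}_a(\pi,v)$, $v\in\mathcal{E}(Q)$, from $\Upsilon$ via the computation $\mathcal{U}(g)-\mathcal{U}(gg_q)=(1-\cos\theta^*)\Delta_Q(u,v)$. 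The one tactical difference is step (ii): the paper invokes an explicit identity $\|\psi_a(QR)\|^2=\vartheta(Q,R)\tr(\underline{Q}(I_3-R))$ with $\vartheta(Q,R)=1-\tfrac18\|I_3-R\|_F^2\cos^2\langle u,\bar Qu\rangle$ (from \cite{berkane2017hybrid}) and bounds $\vartheta$ from below on $\Upsilon$, whereas you obtain the same coercivity by a compactness argument on $\mathcal{R}_\Upsilon$ combined with a local estimate at $I_3$; your version is self-contained but less explicit about the constant $c_Q$.
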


In view of (\ref{eqn:output_measurement}), (\ref{eqn: vector_measurements_to_trace}) and (\ref{eqn:definitinU}), let us introduce the following potential function which can be written in terms of the homogeneous output measurements:
\begin{equation}
\mathcal{U}_1({\tilde{g}}) :=  \frac{1}{2}\tr((I_4-{\tilde{g}}) {\mathbb{A}}(I_4-{\tilde{g}})\T) =  \frac{1}{2}   \sum_{i=1}^n k_i\|{r}_i - \hat{g}b_i\|^2,  \label{eqn:definitinU1}
\end{equation}
where the matrix $\mathbb{A}$ is given in (\ref{eqn:definition_A}). Making use of (\ref{eqn:vector_measurements_to_psi}), (\ref{eqn:vector_measurements_to_Adpsi}) in Lemma \ref{lemma: tr_and_psi} and (\ref{eqn:gradient-g}) in Lemma \ref{lemma:gradient_on_SE_3}, one has the following identities:
\begin{align}
\psi ({\tilde{g}}^{-1}\nabla_{{\tilde{g}}} \mathcal{U}_1({\tilde{g}}) )  &= \frac{1}{2}    \textstyle \sum_{i=1}^{n} k_i ({\hat{g}}b_i  ) \wedge r_i, \label{eqn:psi_tilde_g} \\
\Ad{\hat{g}}^* \psi ({\tilde{g}}^{-1}\nabla_{{\tilde{g}}} \mathcal{U}_1({\tilde{g}}) )  &= \frac{1}{2}    \textstyle \sum_{i=1}^{n} k_i  b_i  \wedge ({\hat{g}}^{-1}r_i)
\end{align}

\begin{proposition}\label{pro:1}
	Consider the following hybrid state observer:
	\begin{align}
	&\underbrace{\begin{array}{ll}
		\dot{\hat{g}}~~  = \hat{g} (\xi_y - \hat{b}_a+ k_{\beta}\beta )^\wedge  	\\
		\dot{\hat{b}}_a~  =   -\Gamma \sigma_b
		\end{array}} _{(\hat{g},\hat{b}_a)~\in~ \mathcal{F}_o }
	\underbrace{\begin{array}{ll}
		\hat{g}^{+} ~ = g_q^{-1} \hat{g},  g_q \in \gamma({\hat{g}})      \\[0.1cm]
		\hat{b}_a^{+} ~ = \hat{b}_a
		\end{array}  } _{(\hat{g},\hat{b}_a)~\in~ \mathcal{J}_o } ,  \label{eqn:observer_design1} \\
	&  \beta
	= \frac{1}{2} \Ad{\hat{g}^{-1}}  \textstyle \sum_{i=1}^{n} k_i (\hat{g}b_i  ) \wedge (r_i),   \label{eqn:observer_design_beta1} \\
	&  \sigma_b
	= \frac{1}{2}    \textstyle \sum_{i=1}^{n} k_i  b_i  \wedge ({\hat{g}}^{-1}r_i). \label{eqn:observer_design_sigma1}
	\end{align}  Choose the set $\mathbb{Q}$ designed as per Lemma \ref{lemma:conditions_exp}. Let Assumption \ref{assump:1} and Assumption \ref{assump:2} hold. Then, the results of Theorem \ref{theo:Theorem_1} hold.
\end{proposition}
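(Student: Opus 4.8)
The plan is to show that Proposition \ref{pro:1} is nothing but a concrete instantiation of Theorem \ref{theo:Theorem_1} with the particular potential function $\mathcal{U}_1$ defined in \eqref{eqn:definitinU1}, so the entire task reduces to verifying that the three structural hypotheses \eqref{eq1_theo1}--\eqref{eq3_theo1} of Theorem \ref{theo:Theorem_1} are met by this choice, and that the correction terms $\beta$ and $\sigma_b$ in \eqref{eqn:observer_design_beta1}--\eqref{eqn:observer_design_sigma1} coincide with the generic expressions \eqref{eqn:observer_design_beta}--\eqref{eqn:observer_design_sigma}. First I would invoke Lemma \ref{lemma:Q} to record that, under Assumption \ref{assump:1}, the matrix $\mathbb{A}=\sum_{i=1}^n k_i r_i r_i\T$ splits (via the decomposition \eqref{eqn:A_dcomposition}) into a block with $d>0$, a positive semidefinite $Q=A-bb\T d^{-1}$ and a positive definite $\bar{Q}=\tfrac12(\tr(Q)I_3-Q)$; these are exactly the ingredients that make $\mathcal{U}_1$ a genuine potential function with the desired critical set, as catalogued in Lemma \ref{lemma:gradient_on_SE_3}.

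Next I would match the observer equations. Using \eqref{eqn: vector_measurements_to_trace} from Lemma \ref{lemma: tr_and_psi} and \eqref{eqn:output_measurement} with $b_i=g^{-1}r_i$, one has $\mathcal{U}_1(\tilde{g})=\tfrac12\tr((I_4-\tilde{g})\mathbb{A}(I_4-\tilde{g})\T)=\tfrac12\sum_i k_i\|r_i-\hat{g}b_i\|^2$, which is computable from measurements and $\hat{g}$ alone; similarly the gradient identity \eqref{eqn:gradient-g} together with \eqref{eqn:vector_measurements_to_psi} gives $\psi(\tilde{g}^{-1}\nabla_{\tilde{g}}\mathcal{U}_1(\tilde{g}))=\tfrac12\sum_i k_i(\hat{g}b_i)\wedge r_i$, and \eqref{eqn:vector_measurements_to_Adpsi} converts $\Ad{\hat{g}}^*$ of this into $\tfrac12\sum_i k_i b_i\wedge(\hat{g}^{-1}r_i)$. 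Substituting these into \eqref{eqn:observer_design_beta} and \eqref{eqn:observer_design_sigma} reproduces precisely \eqref{eqn:observer_design_beta1} and \eqref{eqn:observer_design_sigma1}, and likewise the flow/jump sets \eqref{eqn: definition_flow_set}--\eqref{eqn: definition_jump_set} become computable through the measurement form of $\mathcal{U}_1(\tilde{g})$ and $\mathcal{U}_1(\tilde{g}g_q)$. So \eqref{eqn:observer_design1} is literally system \eqref{eqn:observer_design} specialized to $\mathcal{U}=\mathcal{U}_1$.

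It then remains to check \eqref{eq1_theo1}--\eqref{eq3_theo1}. Here I would appeal directly to Lemma \ref{lemma:conditions_exp}: with $\mathbb{Q}$ chosen as in \eqref{eqn:definition_set_Q} and $\delta<(1-\cos\theta^*)\Delta_Q^*$ with $\mathbb{U}$, $\Delta_Q^*$ designed per Lemma \ref{lemma:Delta}, the inequalities \eqref{eqn:bounded_U}--\eqref{eqn: bouded_Ad_psi} hold on the relevant set $\Upsilon$. Noting that $\Upsilon$ is exactly the $\tilde{g}$-projection of the flow set $\mathcal{F}_c$, and that $|x|_{\bar{\mathcal{A}}}^2=|\tilde{g}|_I^2+\|\tilde{b}_a\|^2$ as already established, the bounds \eqref{eqn:bounded_U}--\eqref{eqn: bouded_Ad_psi} translate verbatim into \eqref{eq1_theo1}--\eqref{eq3_theo1} (with $\alpha_5=\alpha_4$ if one wishes). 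With Assumptions \ref{assump:1}--\ref{assump:2} in force, Theorem \ref{theo:Theorem_1} then applies and delivers completeness of solutions, finiteness of the jump count, and the exponential bound $|x(t,j)|_{\bar{\mathcal{A}}}\le k\exp(-\lambda(t+j))|x(0,0)|_{\bar{\mathcal{A}}}$.

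The proof is therefore essentially a bookkeeping argument: the real mathematical content has been front-loaded into Lemmas \ref{lemma:Q}--\ref{lemma:conditions_exp}. The main obstacle — the only place where something could go wrong — is confirming that condition \eqref{eq3_theo1}, which involves $\Ad{\tilde{g}^{-1}}^*$ (a term that grows with $\|\tilde{p}\|$), is actually covered by Lemma \ref{lemma:conditions_exp}'s inequality \eqref{eqn: bouded_Ad_psi} uniformly over $\Upsilon$; one should make sure the set $\Upsilon$ (equivalently the sublevel structure enforced by the gap $\delta$) keeps $\tilde{g}$ in a region where this adjoint factor is controlled, which is precisely what the choice $\delta<(1-\cos\theta^*)\Delta_Q^*$ guarantees via the block decomposition \eqref{eqn:A_dcomposition}. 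I would state this matching explicitly and then simply cite Theorem \ref{theo:Theorem_1}.
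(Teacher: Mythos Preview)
Your proposal is correct and follows essentially the same route as the paper: instantiate the generic scheme \eqref{eqn:observer_design}--\eqref{eqn: definition_jump_set} with $\mathcal{U}=\mathcal{U}_1$, use Lemmas \ref{lemma: tr_and_psi} and \ref{lemma:gradient_on_SE_3} to identify $\beta,\sigma_b$ with the measurement expressions \eqref{eqn:observer_design_beta1}--\eqref{eqn:observer_design_sigma1}, and then invoke Lemma \ref{lemma:conditions_exp} to supply the bounds \eqref{eq1_theo1}--\eqref{eq3_theo1} so that Theorem \ref{theo:Theorem_1} applies.

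The one noteworthy difference is that the paper's proof inserts an explicit computation you leave implicit: for $\tilde{g}\in\mathcal{X}_{\mathcal{U}_1}$ (so $\tilde{R}=\mathcal{R}_a(\pi,v)$, $v\in\mathcal{E}(Q)$, $\tilde{p}=(I_3-\tilde{R})bd^{-1}$) and $g_q\in\mathbb{Q}$ as in \eqref{eqn:definition_set_Q}, it evaluates $\mathcal{U}_1(\tilde{g}g_q)=\mathcal{U}_1(\tilde{g})-(1-\cos\theta^*)\Delta_Q(u_q,v)$ directly, and hence $\mathcal{U}_1(\tilde{g})-\min_{g_q}\mathcal{U}_1(\tilde{g}g_q)\ge(1-\cos\theta^*)\Delta_Q^*>\delta$, placing the undesired critical points in $\mathcal{J}_c$. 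You instead treat this as absorbed into Lemma \ref{lemma:conditions_exp} (since the lower bound in \eqref{eqn:bounded_psi} on $\Upsilon$ already forces $\mathcal{X}_{\mathcal{U}_1}\cap\Upsilon=\varnothing$), which is logically fine. The paper's explicit calculation has the advantage of making transparent \emph{why} the threshold $\delta<(1-\cos\theta^*)\Delta_Q^*$ is the right one and where Lemma \ref{lemma:Delta} enters; your version is slightly more black-box but no less valid.
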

\begin{proof}
	See Appendix \ref{sec:pro1}
\end{proof}

\begin{remark}
	In view of (\ref{eqn:kenimatic_g}), (\ref{eqn:observer_design}), (\ref{eqn:observer_design_beta1}) and (\ref{eqn:observer_design_sigma1}), the rotational and translational error dynamics in the flow $\mathcal{F}_c$ are given by
	\begin{align}
	\dot{\tilde{R}}~ & = \tilde{R}(-k_\beta (\psi_a(Q\tilde{R})+ {\textstyle \frac{1}{2}} b^\times \tilde{R}\T \tilde{p}_e)  +  (\hat{R}\tilde{b}_{\omega}))^{\times} ,\label{eqn:closed_tilde_R} \\
	\dot{\tilde{p}}~ &= -{     \frac{1}{2}} k_\beta  d\tilde{p}_e + \tilde{R}(\hat{p}^{\times}\hat{R} \tilde{b}_{\omega} + \hat{R}\tilde{b}_v), \label{eqn:closed_tilde_p}
	\end{align}
	where $\tilde{p}_e :=\tilde{p} - (I_3 - \tilde{R} ) bd^{-1}$.
	The error dynamics (\ref{eqn:closed_tilde_R})-(\ref{eqn:closed_tilde_p}) have the same form as Eq. (23) in \cite{hua2011observer}, in the velocity-bias-free case. Note that the dynamics of $\tilde{R}$ and $\tilde{p}$ are coupled as long as $b = dp_c^\mathcal{I}\neq 0$. Therefore, it is expected that noisy or erroneous position measurements would affect the attitude estimation. This motivated us to re-design the estimation scheme in a way that leads to a decoupled rotational error dynamics from the translational error dynamics.
\end{remark}

\section{Decoupling the Rotational Error Dynamics from the Translational Error Dynamics} \label{sec: fully_decoupled}

Define an auxiliary configuration $g_c: = \mathcal{T}(I_3, p_c^\mathcal{I})$ with $p_c^\mathcal{I} =\alpha_i p^{\mathcal{I}}_i$ and $\alpha_i := k_i/\sum_{i=1}^{n_1} k_i $. Consider the modified inertial elements of the homogeneous space $\bar{r}_i$, defined as
$
\bar{r}_i := h(g_c,r_i) =g_c^{-1}r_i, i=1,\cdots,n.
$
Define the modified inertial landmarks as $\bar{p}_i^\mathcal{I} :=  p_i^\mathcal{I} - p_c$.  It is clear that $\sum_{i=1}^{n_1} \alpha_i \bar{p}_i^\mathcal{I} = 0$, which implies that the centroid of the weighted modified landmarks coincides with the origin (see Fig. \ref{fig:digram}). This property is instrumental in achieving decoupled rotational error dynamics from the translational error dynamics. Note that in \cite{vasconcelos2010nonlinear} this property has been achieved through the choice of the parameters $\alpha_i$ assuming that the landmark points are linearly dependent. Our approach does not put such restrictions on the landmarks and the parameters $\alpha_i$.

\begin{figure}[thpb]
	\centering
	\begin{tikzpicture}[xscale=0.56, yscale=0.42]
	\large
	\draw[rotate around={-25:(10 -0.8*1.3,8+0.5*1.3) },fill=black!10] (10 -0.8*1.3,8+0.5*1.3)  ellipse (0.45*1.6 and 0.4*1.6);
	\draw[rotate around={35:(10 -0.8*1.0,8- 0.7*1.0) },fill=black!10] (10 -0.8*1.0,8- 0.7*1.0)  ellipse (0.45*1.2 and 0.4*1.2);
	\draw[rotate around={-25:(10+0.8*1.3,8- 0.5*1.3)},fill=black!10] (10+0.8*1.3,8- 0.5*1.3)  ellipse (0.45*1.3 and 0.4*1.3);
	\draw[rotate around={35:(10 +0.8*1.0,8+ 0.7*1.0) },fill=black!10] (10+0.8*1.0,8+0.7*1.0)  ellipse (0.45*1.6 and 0.4*1.6);
	\draw [line width=3] (10 -0.8*1.3,8+0.5*1.3)  -- (10+0.8*1.3,8- 0.5*1.3);
	\draw [line width=3] (10 -0.8*1.0,8-0.7*1.0)  --  (10+0.8*1.0,8+ 0.7*1.0);
	
	\draw[fill] (7,-1)  circle [radius=0.15] ;
	\draw[fill] (11,-2.5)  circle [radius=0.15] ;
	\draw[fill] (12,0.5) circle [radius=0.15] ;	
	
	\draw [arrow, gray, line width=2.0] (0,0) -- (7*0.98,-1*0.99);	
	\draw [arrow, gray, line width=2.0] (0,0) -- (11*0.98,-2.5*0.99);	
	\draw [arrow, gray, line width=2.0] (0,0) -- (12*0.98,0.5*0.99);
	\node at (6,-0.3)  {$p_1^{\mathcal{I}}$};
	\node at (5,-1.8) {$p_2^{\mathcal{I}}$};
	\node at (5 ,0.8) {$p_3^{\mathcal{I}}$};
	
	\draw [arrow, cyan, line width=2.0] (10,8) -- (7 ,-1*0.88) node at (7 + 2*0.5,-1 +9*0.5)  {$p_1^{\mathcal{B}}$};
	\draw [arrow,  cyan, line width=2.0] (10,8) -- (11,-2.5*0.95) node at (9.8+ 0.5*0.5,-2.5 +10.5*0.5) {$p_2^{\mathcal{B}}$};
	\draw [arrow,  cyan, line width=2.0] (10,8) -- (12 ,0.5*0.98) node at (12.7 -2*0.5,-0.5 +8.5*0.5) {$p_3^{\mathcal{B}}$};	
	
	\draw[fill,black!50] (9.8,-0.5)  circle [radius=0.15] node at (9.3,-0.8) {$p_c^{\mathcal{I}}$};
	\draw [arrow,dashed,black!20,line width=2.0] (9.8,-0.5) -- (7*1.01,-1*0.99);
	\node at (8.0 + 1*0.1,-1 +0.4)  {$\bar{p}_1^{\mathcal{I}}$};
	\draw [arrow,dashed, black!20, line width=2.0] (9.8,-0.5) -- (11 ,-2.5*0.95);
	\node at (9.8 + 1*0.5,-2.0 +1*0.8) {$\bar{p}_2^{\mathcal{I}}$};
	\draw [arrow,dashed, black!20,line width=2.0] (9.8,-0.5) -- (12*0.99,0.5*0.99);
	\node at (11.5-1*0.4,1 -1*0.9) {$\bar{p}_3^{\mathcal{I}}$};
	
	\draw[fill] (0,0) circle [radius=0.15] node at (-0,0.8) {\large $\{\mathcal{I}\}$} ;
	\draw [arrow, thick] (0,0) -- (1*2.5,0*2.5) node at (1*2.8,0*2.8)  {$e_1$};
	\draw [arrow, thick] (0,0) -- (0*2.4,-1*2.4) node at (0*2.5,-1*2.5) {$e_3$};
	\draw [arrow, thick] (0,0) -- (-0.8*2.5,-0.6*2.5) node at (-0.8*2.8,-0.6*2.8)  {$e_2$};
	
	\draw[fill] (10,8) circle [radius=0.15] node at (9.9 ,8+1.5) {\large $\{\mathcal{B}\}$};
	\draw [arrow, thick] (10,8) -- (10-0.8*2.5,8+0.5*2.5) node at (10-0.8*2.8,8+0.5*2.8) {$e_2$};
	\draw [arrow, thick] (10,8) -- (10-0.8*2.5,8- 0.7*2.5) node at (10-0.8*2.8,8- 0.7*2.8) {$e_1$};
	\draw [arrow, thick] (10,8) -- (10+0  *2.5,8-   1*2.5)  node at (10+0*2.8, 8-1*2.8) {$e_3$};	
	
	\draw[fill=black!50] (9.8,-0.5)  circle [radius=0.15] node at (9.8,0.5) { $\{\mathcal{I}'\}$} ;
	\draw [arrow, thick] (9.8,-0.5) -- (9.8+1*2.5,-0.5+0*2.5) node at (9.8+1*2.8,-0.5+0*2.8)  {$e_1'$};
	\draw [arrow, thick] (9.8,-0.5) -- (9.8+0*2.4,-0.5-1*2.4) node at (9.8+0*2.5,-0.5-1*2.5) {$e_3'$};
	\draw [arrow, thick] (9.8,-0.5)  -- (9.8+-0.8*2.5,-0.5-0.6*2.5) node at (9.8-0.8*2.8,-0.5-0.6*2.8)  {$e_2'$};
	\end{tikzpicture}
	\caption{The landmarks coordinates in the inertial frame and body frame are represented with Solid lines. The landmarks coordinates in the auxiliary frame are represented with dotted lines.}
	\label{fig:digram}
\end{figure}
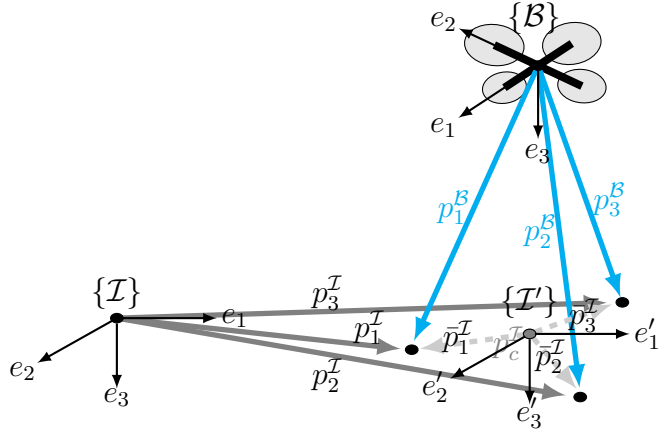
	
Define the modified pose and pose estimate as $\underline{g} :=\mathcal{T}(\underline{ {R}},\underline{ {p}})= {g}_c^{-1}g$ and $\underline{\hat{g}} := \mathcal{T}(\underline{\hat{R}},\underline{\hat{p}})={g}_c^{-1}\hat{g}$. One verifies that $b_i  = h(g_c^{-1}g,\bar{r}_i) = \underline{g}^{-1}\bar{r}_i$. Define the new pose estimation error $\underline{\tilde{g}}=\mathcal{T}(\underline{\tilde{R}},\underline{\tilde{p}}): =\underline{g} \underline{\hat{g}}^{-1} = {g}_c^{-1}g\hat{g}^{-1}{g}_c$ with $\underline{\tilde{R}}=\tilde{R}$ and $\underline{\tilde{p}}=\tilde{p}-(I-\tilde{R})p_c$. It is clear that that $\tilde{g}$ tends to $I_4$ if $\underline{\tilde{g}}$ tends to $I_4$. Let us introduce the following potential function:
\begin{equation}
\mathcal{U}_2(\underline{\tilde{g}}):=  \frac{1}{2}\tr((I_4-\underline{\tilde{g}}) \bar{\mathbb{A}}(I_4-\underline{\tilde{g}})\T) = \frac{1}{2}   \sum_{i=1}^n k_i\|\bar{r}_i - \underline{\hat{g}}b_i\|^2,  \label{eqn:definitinU2}
\end{equation}
where $\bar{\mathbb{A}}: = \sum_{i=1}k_i \bar{r}_i\bar{r}_i\T=\text{diag}(Q,d)$. In view of (\ref{eqn:property2}), (\ref{eqn:definitinU1}) and (\ref{eqn:definitinU2}), one can show that, for any $\tilde{g}\in SE(3)$
\begin{align*}
\mathcal{U}_2(\underline{\tilde{g}})
&=  \frac{1}{2}\tr((I_4-g_c^{-1}{\tilde{g}}g_c) g_c^{-1}\mathbb{A}g_c^{-\top} (I_4-g_c^{-1}{\tilde{g}}g_c)\T) \\
&=  \frac{1}{2}\tr(g_c^{-1}(I_4- {\tilde{g}})  \mathbb{A}  (I_4- {\tilde{g}} )\T g_c^{-\top} )  \\
&=  \frac{1}{2}\tr((I_4- {\tilde{g}})  \mathbb{A}  (I_4- {\tilde{g}} )\T ) 
\end{align*}
which implies
$\mathcal{U}_2(\underline{\tilde{g}})=  \mathcal{U}_1(\tilde{g})$. In the sequel, we will make use of $\mathcal{U}_2(\underline{\tilde{g}})$ and $\mathcal{U}_1(\tilde{g})$ equivalently. Making use of (\ref{eqn:vector_measurements_to_psi}), (\ref{eqn:vector_measurements_to_Adpsi})  in Lemma \ref{lemma: tr_and_psi} and (\ref{eqn:gradient-g}) in Lemma \ref{lemma:gradient_on_SE_3}, one can also show that
\begin{align}
\psi (\underline{\tilde{g}}^{-1}\nabla_{\underline{\tilde{g}}} \mathcal{U}_{2}(\underline{\tilde{g}}) )  &=   \frac{1}{2}    \sum_{i=1}^{n} k_i (g_c^{-1}\hat{g}b_i  ) \wedge (g_c^{-1}r_i),\label{eqn:psi_underline_tilde_g} \\
\Ad{g_c^{-1}\hat{g}}^*   \psi (\underline{\tilde{g}}^{-1}\nabla_{\underline{\tilde{g}}} \mathcal{U}_{2}(\underline{\tilde{g}}) )  &=   \frac{1}{2}    \sum_{i=1}^{n} k_i b_i  \wedge (\hat{g}^{-1}r_i).
\end{align}

Define the extended state $x':=(\underline{\tilde{g}},\tilde{b},\hat{g},\hat{b}_a,t)\in \mathcal{S}$ and the closed set $\bar{\mathcal{A}}'  := \{x'\in   \mathcal{S}: \underline{\tilde{g}}=I_4, \tilde{b}_a=0\}$. Let $|x'|_{\bar{\mathcal{A}}'}$ denote the distance to the set $\bar{\mathcal{A}}'$ such that
$
|x'|_{\bar{\mathcal{A}}'}^2 := \inf_{y=(I_4,0,\bar{g},\bar{b}_a,\bar{t}) \in \bar{\mathcal{A}}'} (\|I_4-\underline{\tilde{g}}\|_F^2 + \|\tilde{b}_a\|^2 + \|\bar{g}-\hat{g}\|_F^2 + \|\bar{b}_a-\hat{b}_a\|^2+ \|\bar{t}-t\|^2)=|\underline{\tilde{g}}|_I^2 + \|\tilde{b}_a\|^2$.

\begin{proposition}\label{pro:2}
	Consider the following hybrid state observer:
	\begin{align}
	&\underbrace{\begin{array}{l}
		\dot{\hat{g}} ~ = \hat{g} (\xi_y - \hat{b}_a+ k_{\beta}\beta )^\wedge  	\\
		\dot{\hat{b}}_a  =  -\Gamma \sigma_b
		\end{array}} _{({\hat{g}},\hat{b}_a)~\in~ \mathcal{F}_o}
	\underbrace{\begin{array}{l}
		\hat{g}^{+}  = g_q^{-1} \hat{g},  g_q \in \gamma({\hat{g}})      \\[0.1cm]
		\hat{b}_a^{+}  = \hat{b}_a
		\end{array}  } _{({\hat{g}},\hat{b}_a)~\in~ \mathcal{J}_o } ,  \label{eqn:observer_design2} \\
	& ~~\beta~=   \frac{1}{2} \Ad{\hat{g}^{-1}g_c}     \sum_{i=1}^{n} k_i (g_c^{-1}\hat{g}b_i  ) \wedge (g_c^{-1}r_i)  \label{eqn:observer_design_beta2} \\
	& ~~\sigma_b=   \frac{1}{2}    \sum_{i=1}^{n} k_i b_i  \wedge (\hat{g}^{-1}r_i). \label{eqn:observer_design_sigma2}
	\end{align}
	Choose the set $\mathbb{Q}$ designed as per Lemma \ref{lemma:conditions_exp}. Let Assumption \ref{assump:1} and Assumption \ref{assump:2} hold. Then, the number of jumps is finite and for any initial condition $x'(0,0)\in \mathcal{S}$, the solution $x'(t,j)$ is complete and there exist $\bar{k}>0$ and $\bar{\lambda}>0$ such that
	\begin{equation}
	|x'(t,j)|_{\bar{\mathcal{A}}'}\leq \bar{k} \exp(-\bar{\lambda} (t+j)) |x'(0,0)|_{\bar{\mathcal{A}}'},
	\end{equation}
for all $(t,j)\in \dom{x'}$.
\end{proposition}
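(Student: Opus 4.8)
The plan is to reduce Proposition \ref{pro:2} to Theorem \ref{theo:Theorem_1} by exhibiting the observer \eqref{eqn:observer_design2}--\eqref{eqn:observer_design_sigma2} as an instance of the generic scheme \eqref{eqn:observer_design}--\eqref{eqn:observer_design_sigma} written in the \emph{shifted} coordinates $\underline{g}=g_c^{-1}g$, $\underline{\hat g}=g_c^{-1}\hat g$. The first step is to note that since $g_c$ is a fixed element of $SE(3)$, the map $(\tilde g,\tilde b_a,\hat g,\hat b_a,t)\mapsto(\underline{\tilde g},\tilde b_a,\underline{\hat g},\hat b_a,t)$ is a diffeomorphism of $\mathcal{S}$ that carries $\bar{\mathcal{A}}$ onto $\bar{\mathcal{A}}'$ and satisfies $c_1|x|_{\bar{\mathcal{A}}}\le|x'|_{\bar{\mathcal{A}}'}\le c_2|x|_{\bar{\mathcal{A}}}$ for constants $c_1,c_2>0$ depending only on $\|g_c\|$; this is immediate from $\underline{\tilde g}=g_c^{-1}\tilde g g_c$, property \eqref{eqn:property2}, and equivalence of norms on the compact set where $\tilde g$ ranges (Assumption \ref{assump:2}). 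Hence exponential decay of $|x'|_{\bar{\mathcal{A}}'}$ is equivalent to exponential decay of $|x|_{\bar{\mathcal{A}}}$.

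Next I would verify that, in the $\underline{\,\cdot\,}$ coordinates, \eqref{eqn:observer_design2} is exactly the generic filter \eqref{eqn:observer_design} driven by the potential function $\mathcal{U}_2$ and the modified inertial elements $\bar r_i=g_c^{-1}r_i$. Concretely: from $\dot{\hat g}=\hat g(\xi_y-\hat b_a+k_\beta\beta)^\wedge$ one gets $\dot{\underline{\hat g}}=g_c^{-1}\dot{\hat g}=\underline{\hat g}(\xi_y-\hat b_a+k_\beta\beta)^\wedge$, so the generic correction term in these coordinates must be $\underline{\beta}:=\Ad{\underline{\hat g}^{-1}}\psi(\underline{\tilde g}^{-1}\nabla_{\underline{\tilde g}}\mathcal{U}_2(\underline{\tilde g}))$. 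Using $\underline{\hat g}^{-1}=\hat g^{-1}g_c$ together with \eqref{eqn:psi_underline_tilde_g}, one computes $\underline{\beta}=\Ad{\hat g^{-1}g_c}\,\tfrac12\sum_i k_i(g_c^{-1}\hat g b_i)\wedge(g_c^{-1}r_i)$, which is precisely $\beta$ in \eqref{eqn:observer_design_beta2}. Similarly, the generic bias update requires $\sigma_b=\Ad{\underline{\hat g}}^*\psi(\underline{\tilde g}^{-1}\nabla_{\underline{\tilde g}}\mathcal{U}_2(\underline{\tilde g}))=\Ad{g_c^{-1}\hat g}^*\psi(\underline{\tilde g}^{-1}\nabla_{\underline{\tilde g}}\mathcal{U}_2(\underline{\tilde g}))$, which by the displayed identity just before the proposition equals $\tfrac12\sum_i k_i b_i\wedge(\hat g^{-1}r_i)$, matching \eqref{eqn:observer_design_sigma2}. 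I would also check that the jump law $\hat g^+=g_q^{-1}\hat g$ transforms consistently, i.e.\ $\underline{\hat g}^+=g_q^{-1}\underline{\hat g}$ and $\underline{\tilde g}^+=\underline{\tilde g}\,g_q$, so that the flow/jump sets expressed via $\mathcal{U}_2(\underline{\tilde g})-\min_{g_q}\mathcal{U}_2(\underline{\tilde g}g_q)$ coincide with $\mathcal{F}_o,\mathcal{J}_o$ as written (here one uses $\mathcal{U}_2(\underline{\tilde g})=\mathcal{U}_1(\tilde g)$ and the analogous shift-invariance $\mathcal{U}_2(\underline{\tilde g}g_q)=\mathcal{U}_1(\tilde g\, \underline{g_q})$, so the set $\mathbb{Q}$ for the shifted problem is $g_c^{-1}\mathbb{Q}g_c$ — and I would remark that since $g_c=\mathcal{T}(I_3,p_c^\mathcal{I})$ and $\bar{\mathbb{A}}=\mathrm{diag}(Q,d)$, the construction of Lemma \ref{lemma:conditions_exp} applied to $\bar{\mathbb{A}}$ reproduces exactly this conjugated set, so no new design is needed).

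It then remains to discharge the hypotheses \eqref{eq1_theo1}--\eqref{eq3_theo1} of Theorem \ref{theo:Theorem_1} for the pair $(\mathcal{U}_2,\bar{\mathbb{A}})$. This is where Lemma \ref{lemma:gradient_on_SE_3} and Lemma \ref{lemma:conditions_exp} are invoked directly: $\bar{\mathbb{A}}=\sum_i k_i\bar r_i\bar r_i^\top$ is of the same form as $\mathbb{A}$ with inertial data $\{\bar v_i^\mathcal{I},v_j^\mathcal{I}\}$, its associated matrices $Q,\bar Q$ are unchanged (indeed $\bar{\mathbb{A}}=\mathrm{diag}(Q,d)$ has $b=0$), so Assumption \ref{assump:1} still holds and Lemma \ref{lemma:conditions_exp} yields \eqref{eqn:bounded_U}--\eqref{eqn:bouded_Ad_psi} with $\Upsilon$ defined through the conjugated $\mathbb{Q}$; restricting to $\underline{\tilde g}$ gives exactly \eqref{eq1_theo1}--\eqref{eq3_theo1}. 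Finally, invoking Theorem \ref{theo:Theorem_1} for the shifted system produces finitely many jumps and $|x'(t,j)|_{\bar{\mathcal{A}}'}\le k\exp(-\lambda(t+j))|x'(0,0)|_{\bar{\mathcal{A}}'}$, and pulling back through the bi-Lipschitz change of coordinates absorbs the constants $c_1,c_2$ into $\bar k$, leaving $\bar\lambda=\lambda$. The main obstacle I anticipate is purely bookkeeping rather than conceptual: carefully tracking how $\Ad{\cdot}$ and $\Ad{\cdot}^*$ interact with left-multiplication by the fixed $g_c$ (using $\Ad{g_1}\Ad{g_2}=\Ad{g_1g_2}$ and the identity $\psi(g^\top X g^{-\top})=\Ad{g}^*\psi(X)$) so that \eqref{eqn:observer_design_beta2}--\eqref{eqn:observer_design_sigma2} are shown to be \emph{literally} the generic terms in the new coordinates, and confirming that the jump set in the $\underline{\,\cdot\,}$ coordinates really is the one written in \eqref{eqn:observer_design2} — i.e.\ that the minimization over $g_q\in\mathbb{Q}$ transports correctly under conjugation.
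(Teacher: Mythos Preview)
Your approach is correct and is exactly what the paper intends: its entire proof reads ``The proof is similar as Proposition \ref{pro:1},'' and your proposal spells out that similarity by recognizing the observer \eqref{eqn:observer_design2}--\eqref{eqn:observer_design_sigma2} as the generic scheme \eqref{eqn:observer_design}--\eqref{eqn:observer_design_sigma} applied to the potential $\mathcal{U}_2$ and the shifted data $\bar r_i$, then invoking Lemma \ref{lemma:conditions_exp} (with $\bar{\mathbb{A}}=\mathrm{diag}(Q,d)$, hence $b=0$) and Theorem \ref{theo:Theorem_1}. One small bookkeeping slip: the jump does not transform to $\underline{\hat g}^+=g_q^{-1}\underline{\hat g}$ but rather to $\underline{\hat g}^+=\bar g_q^{-1}\underline{\hat g}$ with $\bar g_q:=g_c^{-1}g_q g_c$ (and correspondingly $\underline{\tilde g}^+=\underline{\tilde g}\,\bar g_q$, as the paper records in \eqref{eqn:closed_loop4}); you effectively acknowledge this when you pass to the conjugated set $g_c^{-1}\mathbb{Q}g_c$, and your observation that Lemma \ref{lemma:conditions_exp} applied to $\bar{\mathbb{A}}$ reproduces exactly this conjugated set (pure rotations, since $p_q=(I_3-R_q)\cdot 0=0$) is the right way to close the loop. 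The bi-Lipschitz norm-comparison step is harmless but unnecessary, since Theorem \ref{theo:Theorem_1} in the shifted coordinates already delivers the bound directly in terms of $|\underline{\tilde g}|_I^2+\|\tilde b_a\|^2=|x'|_{\bar{\mathcal{A}}'}^2$.
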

\begin{proof}
	The proof is similar as Proposition \ref{pro:1}.
\end{proof}

\begin{remark}
		Interestingly, in view of (\ref{eqn:vector_measurements_to_psi})-(\ref{eqn:vector_measurements_to_Adpsi}),  one obtains the following expressions of $\beta$ as follows:
		\begin{align}
		&\frac{1}{2} \Ad{\hat{g}^{-1}g_c}    \sum_{i=1}^{n} k_i (g_c^{-1}\hat{g}b_i  ) \wedge (g_c^{-1}r_i)  = \frac{1}{2} \Ad{\hat{g}^{-1}}\Ad{g_c}  \Ad{g_c}^* \sum_{i=1}^{n} k_i ( \hat{g}b_i  ) \wedge ( r_i)    \label{eqn:new_beta}
		\end{align}
	It is worth pointing that the sole difference between this new observer and the observer in Proposition 1 lies in the definition terms $\beta$ (see (\ref{eqn:observer_design_beta1}) and \ref{eqn:new_beta}).
	In view of (\ref{eqn:kenimatic_g}), (\ref{eqn:observer_design2})-(\ref{eqn:observer_design_sigma2}), the error dynamics can be written as
	\begin{align}
	\dot{\underline{\tilde{R}}}~ & = \underline{\tilde{R}}(-k_\beta\psi_a(Q\underline{\tilde{R}}) +  (\underline{\hat{R}}\tilde{b}_{\omega}))^{\times},  \label{eqn:dynamic_tilde_R}\\
	\dot{\underline{\tilde{p}}} ~&=  - {  \frac{1}{2}}k_\beta d \underline{\tilde{p}}+ \underline{\tilde{R}}(  \underline{\hat{p}}^{\times}\underline{\hat{R}} \tilde{b}_{\omega} + \underline{\hat{R}}\tilde{b}_v), \label{eqn:dynamic_tilde_p} \\
	\dot{\tilde{b}}_\omega & =   -k_\omega \left(\underline{\hat{R}}\T \psi_a(Q\underline{\tilde{R}})   -{  \frac{1}{2}}d \underline{\hat{R}}\T \underline{\hat{p}}^\times  \underline{\tilde{R}}\T\underline{\tilde{p}}\right)  \label{eqn:dot_tilde_b_omega}\\
	\dot{\tilde{b}}_v & =  	-{  \frac{1}{2}}k_v d\underline{\hat{R}}\T\underline{\tilde{R}}\T \underline{\tilde{p}}  \label{eqn:dynamic_tilde_b_a}.
	\end{align}
	It is clear that, in the velocity bias-free case, in contrast to \eqref{eqn:closed_tilde_R}, the dynamics of $\underline{\tilde{R}}$ does not depend on $\underline{\tilde{p}}$ as shown in (\ref{eqn:dynamic_tilde_R}), and $\underline{\tilde{p}}$ enjoys exponential stability when $\tilde{b}_a = 0$ as it can be seen from (\ref{eqn:dynamic_tilde_p}). However, when the velocity bias is considered, the rotational error dynamics is affected by the estimated position $\hat{p}$ involved in the dynamics of $\tilde{b}_\omega$ in (\ref{eqn:dot_tilde_b_omega} ).
	In order to achieve the decoupled property, in the case where velocity bias is not neglected, the following modified estimation scheme is proposed.
	
\end{remark}

Let us consider the following modified estimation scheme:
\begin{align}
&\underbrace{\begin{array}{ll}
	\dot{\hat{g}}   ~= \hat{g} (\xi_y - \hat{b}_a+ k_{\beta}\beta )^\wedge \\
	\dot{\hat{b}}_a = -\Gamma \sigma_b
	\end{array}}_{({\hat{g}},\hat{b}_a)~\in~ \mathcal{F}_o}
\underbrace{\begin{array}{ll}
	\hat{g}^{+}   ~= g_q^{-1} \hat{g},\    g_q \in \gamma(\hat{g})      \\[0.1cm]
	\hat{b}_a^{+}  ~ = \hat{b}_a
	\end{array} }_{({\hat{g}},\hat{b}_a~\in~ \mathcal{J}_o} ,  \label{eqn:observer_design3} \\
& ~~\beta~= \frac{1}{2}\Ad{\hat{g}^{-1}g_c}  \textstyle \sum_{i=1}^{n} k_i (g_c^{-1}\hat{g}b_i  ) \wedge (g_c^{-1}r_i), \label{eqn:observer_design_beta3}\\
&   ~~\sigma_b=\frac{1}{2} \Lambda\T  \textstyle \sum_{i=1}^{n} k_i (g_c^{-1}\hat{g}b_i  ) \wedge (g_c^{-1}r_i).
\label{eqn:observer_design_sigma3}
\end{align}
where $\Lambda :=\text{diag}(\underline{\hat{R}}, \underline{\hat{R}})$, $ \hat{g}(0) \in SE(3), \hat{b}_\omega(0),\hat{b}_v(0)\in \mathbb{R}^3$,  and $k_\omega,k_v, k_{\beta}$ and $k_b$ are strictly positive scalars.  In view of (\ref{eqn:kenimatic_g}), (\ref{eqn:observer_design3})-(\ref{eqn:observer_design_sigma3}) one can write the closed-loop system as an autonomous hybrid system.
\begin{align}
&\mathcal{H}': \begin{cases}
\dot{x}'~~\in F(x') & x'\in \mathcal{F}_c':=\{x'\in \mathcal{S}: (\hat{g},\hat{b}_a)\in \mathcal{F}_o\}  \\
x'^+\in G(x') & x'\in \mathcal{J}_c':=\{x'\in \mathcal{S}: (\hat{g},\hat{b}_a)\in \mathcal{J}_o\}
\end{cases}   \label{eqn:closed_loop4}
\end{align}
with
\begin{align}
	F(x')= \begin{bmatrix}
	\underline{\tilde{g}}   (\Ad{\underline{\hat{g}}}\tilde{b}_a - k_\beta  \psi(\underline{\tilde{g}}^{-1}\nabla_{\underline{\tilde{g}}}\mathcal{U}_2(\underline{\tilde{g}}))   )^\wedge \\
	-\Gamma \Lambda\T \psi(\underline{\tilde{g}}^{-1}\nabla_{\underline{\tilde{g}}} \mathcal{U}_2(\underline{\tilde{g}}) )\\
	\underline{\hat{g}} (\xi_y - \hat{b}_a+k_\beta \Ad{\underline{\hat{g}}^{-1}}  \psi(\underline{\tilde{g}}^{-1}\nabla_{\underline{\tilde{g}}} \mathcal{U}_2(\underline{\tilde{g}}) )  )^\wedge   \\
	-\Gamma \Lambda\T \psi(\underline{\tilde{g}}^{-1}\nabla_{\underline{\tilde{g}}} \mathcal{U}_2(\underline{\tilde{g}}) )\\
	1
	\end{bmatrix}, G(x') = \begin{bmatrix}
	\underline{\tilde{g}}\bar{g}_q \\[0.05cm]
	\tilde{b}_a \\
	g_q^{-1}\hat{g} \\
	\hat{b}_a \\
	t
	\end{bmatrix}.   \nonumber
\end{align}
where $\bar{g}_q:=g_c^{-1}g_q g_c$. Note that the closed-loop system (\ref{eqn:closed_loop4}) also satisfies the hybrid basic conditions of \cite{goebel2009hybrid}. Now, one can state the following theorem:
\begin{theorem}\label{theo:Theorem_2}
	Consider the closed-loop system (\ref{eqn:closed_loop4}) with potential function $\mathcal{U}_2$ in (\ref{eqn:definitinU2}). Choose the set $\mathbb{Q}$ designed as per Lemma \ref{lemma:conditions_exp}. Let Assumption \ref{assump:1} and Assumption \ref{assump:2} hold. Then, the number of jumps is finite and for any initial condition $x'(0,0)\in \mathcal{S}$, the solution $x'(t,j)$ is complete and there exist $k'>0$ and $\lambda'>0$ such that
	\begin{equation}
	|x'(t,j)|_{\bar{\mathcal{A}}'}\leq k' \exp(-\lambda' (t+j)) |x'(0,0)|_{\bar{\mathcal{A}}'},
	\end{equation}
	for all $(t,j)\in \dom{x'}$.
\end{theorem}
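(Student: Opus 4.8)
The plan is to show that Theorem \ref{theo:Theorem_2} reduces, after an appropriate reinterpretation of the correcting terms, to the setting of Theorem \ref{theo:Theorem_1} applied to the modified pose error $\underline{\tilde g}$ and the potential function $\mathcal{U}_2$. First I would verify the hypotheses \eqref{eq1_theo1}--\eqref{eq3_theo1} of Theorem \ref{theo:Theorem_1} for $\mathcal{U}_2$: since $\bar{\mathbb{A}}=\mathrm{diag}(Q,d)$ with $Q=\bar Q$-positive-definite and $d>0$ by Lemma \ref{lemma:Q}, the quadratic form $\mathcal{U}_2(\underline{\tilde g})=\tfrac12\tr((I_4-\underline{\tilde g})\bar{\mathbb{A}}(I_4-\underline{\tilde g})\T)$ is of the form treated in Lemma \ref{lemma:conditions_exp}, so \eqref{eqn:bounded_U}--\eqref{eqn: bouded_Ad_psi} give exactly \eqref{eq1_theo1}--\eqref{eq3_theo1} on the flow set $\mathcal{F}_c'$, provided $\delta<(1-\cos\theta^*)\Delta_Q^*$ with $\mathbb{U}$ and $\Delta_Q^*$ as in Lemma \ref{lemma:Delta}. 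I would also note that $\bar{g}_q=g_c^{-1}g_qg_c$ performs the jump $\underline{\tilde g}^+=\underline{\tilde g}\,\bar{g}_q$, and because $g_c=\mathcal{T}(I_3,p_c^{\mathcal{I}})$ is a pure translation while each $g_q\in\mathbb{Q}$ has the specific $(R,p)$ structure of Lemma \ref{lemma:conditions_exp}, one checks $\mathcal{U}_2(\underline{\tilde g}\bar{g}_q)=\mathcal{U}_1(\tilde g g_q)$, so the jump on $\underline{\tilde g}$ still decreases $\mathcal{U}_2$ by at least $\delta$ and the flow/jump sets in \eqref{eqn: definition_flow_set}--\eqref{eqn: definition_jump_set} are consistent when rewritten in $\underline{\tilde g}$.

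The crux is the computation of the closed-loop error dynamics showing that $F(x')$ above is the correct vector field and that the bias-error dynamics are still driven by a negative-gradient-like term. Starting from $\dot{\underline{\tilde g}}=\dot g_c^{-1}g\hat g^{-1}g_c+\cdots$ — but $g_c$ is constant, so $\dot{\underline{\tilde g}}=g_c^{-1}\dot g\hat g^{-1}g_c-g_c^{-1}g\hat g^{-1}\dot{\hat g}\hat g^{-1}g_c$ — and substituting \eqref{eqn:kenimatic_g}, \eqref{eqn:velocity_bias}, \eqref{eqn:observer_design3}, I would use \eqref{eqn:psi_underline_tilde_g} to identify $k_\beta\beta$ as $k_\beta\Ad{\underline{\hat g}^{-1}}\psi(\underline{\tilde g}^{-1}\nabla_{\underline{\tilde g}}\mathcal{U}_2(\underline{\tilde g}))$, giving the third line of $F(x')$; conjugating, the $\underline{\tilde g}$-line becomes $\underline{\tilde g}(\Ad{\underline{\hat g}}\tilde b_a-k_\beta\psi(\underline{\tilde g}^{-1}\nabla_{\underline{\tilde g}}\mathcal{U}_2(\underline{\tilde g})))^\wedge$ exactly as displayed. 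For the bias, the key point is that \eqref{eqn:observer_design_sigma3} uses $\Lambda\T=\mathrm{diag}(\underline{\hat R}\T,\underline{\hat R}\T)$ in place of $\Ad{\underline{\hat g}}^*$, which is precisely what kills the $\underline{\hat p}^\times$ coupling present in \eqref{eqn:dot_tilde_b_omega}; I would then extract the explicit scalar form \eqref{eqn:dynamic_tilde_R}--\eqref{eqn:dynamic_tilde_b_a} with the $\underline{\hat p}$-term replaced so that $\dot{\underline{\tilde R}}$, $\dot{\tilde b}_\omega$ form a self-contained rotational subsystem decoupled from $\underline{\tilde p},\tilde b_v$.

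With the reinterpreted dynamics in hand, I would run a Lyapunov/hybrid-invariance argument mirroring the proof of Theorem \ref{theo:Theorem_1} (Appendix \ref{sec:theo1}). Take $V(x')=\mathcal{U}_2(\underline{\tilde g})+\tfrac12\tilde b_a\T\Gamma^{-1}\tilde b_a$; along flows the $\psi(\underline{\tilde g}^{-1}\nabla\mathcal{U}_2)$-correcting term gives $\dot V\le -k_\beta\|\psi(\underline{\tilde g}^{-1}\nabla_{\underline{\tilde g}}\mathcal{U}_2)\|^2+(\text{cross terms in }\tilde b_a)$, and the cross terms cancel against the bias update by construction of $\sigma_b$ — here I must be slightly careful because $\Lambda\T\neq\Ad{\underline{\hat g}}^*$, so the cancellation is only of the leading quadratic part and one keeps a residual that is handled by the decoupled structure plus a cascade argument (rotational subsystem exponentially stable by \eqref{eq2_theo1}-type bounds and a strictification/cross-term Lyapunov function à la \cite{berkane2017hybrid2}, then the translational subsystem \eqref{eqn:dynamic_tilde_p},\eqref{eqn:dynamic_tilde_b_a} is linear-time-varying driven by an exponentially decaying input). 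Across jumps, $V$ strictly decreases by at least $\delta$, so only finitely many jumps occur; after the last jump the solution flows forever and the exponential estimate on $|x'|_{\bar{\mathcal{A}}'}$ follows from $\mathcal{U}_2$-to-$|\underline{\tilde g}|_I^2$ equivalence \eqref{eqn:bounded_U}. I expect the main obstacle to be precisely this last point: verifying that replacing $\Ad{\underline{\hat g}}^*$ by $\Lambda\T$ in $\sigma_b$ still yields a valid Lyapunov decrease — i.e., that the decoupled cascade genuinely closes the stability argument without an exact energy cancellation — and establishing uniform exponential decay of the resulting linear time-varying translational subsystem under the boundedness Assumption \ref{assump:2} (a persistency-of-excitation-free argument is available because the $-\tfrac12 k_\beta d\,\underline{\tilde p}$ term is strictly Hurwitz).
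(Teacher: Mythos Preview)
Your proposal is correct and follows essentially the same route as the paper: recognize that $\Lambda\T\neq\Ad{\underline{\hat g}}^*$ prevents a direct appeal to Theorem~\ref{theo:Theorem_1}, then exploit the decoupled rotational subsystem $(\underline{\tilde R},\tilde b_\omega)$ via a strictified Lyapunov function with a cross term, and treat the translational subsystem $(\underline{\tilde p},\tilde b_v)$ as a cascade driven by the exponentially decaying rotational error. The paper implements this by defining $\mathcal{L}_R$ and $\mathcal{L}_p$ separately (skipping your preliminary $V$ step) and combining them as $\mathcal{L}'=\mathcal{L}_R+\mathcal{L}_p$; one small simplification you may have missed is that $\bar g_q=g_c^{-1}g_qg_c=\mathcal{T}(R_q,0)$, so $\underline{\tilde p}^+=\underline{\tilde p}$ and the jump analysis for $\mathcal{L}_p$ is trivial.
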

\begin{proof}
	See Appendix \ref{sec: Theorem2}.
\end{proof}
\begin{remark}
	From (\ref{eqn:vector_measurements_to_Adpsi}), (\ref{eqn:observer_design_beta3}) and (\ref{eqn:observer_design_sigma3}), one can show that
	\begin{align}
	& \beta =  \frac{1}{2}  \Ad{\hat{g}^{-1}}\Ad{g_c}  \Ad{g_c}^*   \textstyle \sum_{i=1}^{n} k_i ( \hat{g}b_i  ) \wedge ( r_i)    \label{eqn:new_beta2}\\
	&\sigma_b =  \frac{1}{2} \Lambda\T \Ad{g_c} ^*  \textstyle  \sum_{i=1}^{n} k_i (\hat{g}b_i  ) \wedge (r_i)  . \label{eqn:new_sigma_b}
	\end{align}
	In view of (\ref{eqn:observer_design1})-(\ref{eqn:observer_design_sigma1}) and (\ref{eqn:observer_design3}), (\ref{eqn:new_beta2})-(\ref{eqn:new_sigma_b}), one can notice that the difference between the observer in Theorem \ref{theo:Theorem_2} and the observer in Proposition \ref{pro:1} is related to the terms $\beta$ and $\sigma_b$. It is worth pointing out that the observer in Theorem \ref{theo:Theorem_2}, without ``hybridation'' (\textit{i.e.,} in the flow set), is not gradient-based as in \cite{lageman2010gradient,hua2011observer,hua2015gradient,khosravian2015observers}.
\end{remark}
\begin{remark}
	In view of  (\ref{eqn:vector_measurements_to_Adpsi}), (\ref{eqn:gradient-g}) and (\ref{eqn:closed_loop4}), one has the following error dynamics in the flows:
	\begin{align}
	\dot{\underline{\tilde{R}}} ~& = \underline{\tilde{R}}  (-k_\beta\psi_a(Q\underline{\tilde{R}}) + \underline{\hat{R}}\tilde{b}_{\omega})^{\times} ,  \label{eqn:closed_tilde_R3}\\
	\dot{\underline{\tilde{p}}} ~&=  -{     \frac{1}{2}}k_\beta d\underline{\tilde{p}}+ \underline{\tilde{R}}(  \underline{\hat{p}}^{\times}\underline{\hat{R}} \tilde{b}_{\omega} + \underline{\hat{R}}\tilde{b}_v), \label{eqn:closed_tilde_p3} \\
	\dot{\tilde{b}}_\omega & =  -k_\omega   \underline{\hat{R}}\T \psi_a(Q\underline{\tilde{R}})  , \label{eqn:dynamic_tilde_b_omega3} \\
	\dot{\tilde{b}}_v & =  - {      \frac{1}{2}}k_v  d \underline{\hat{R}}\T  \underline{\tilde{R}} \T \underline{\tilde{p}}  . \label{eqn:dynamic_tilde_b_v3}
	\end{align}
	Using the facts $\underline{\tilde{R}}=\tilde{R}, \underline{\tilde{p}}=\tilde{p}-(I_3-\tilde{R})p_c^\mathcal{I}, \underline{\hat{R}}=\hat{R}$ and $\underline{\hat{p}}=\hat{p}-p_c^\mathcal{I}$, one can notice that the rotational error dynamics (\ref{eqn:closed_tilde_R3}) together with (\ref{eqn:dynamic_tilde_b_omega3}) do not depend on the translational estimation, which guarantees the aimed at decoupling property. 
\end{remark}
\section{Simulation} \label{sec:simulation}
In this section, some simulation results are presented to illustrate the performance of the proposed hybrid pose observers (observer in Proposition \ref{pro:1}, observer in Proposition \ref{pro:2} and  observer in Theorem \ref{theo:Theorem_2} and referred to,respectively, as H, HD1 and HD2). We refer to the smooth  non-hybrid observer (\textit{i.e.,} the observer in Proposition \ref{pro:1} without the switching mechanism) as S.

As commonly used in practical applications, to avoid the bias estimation drift, in the presence of measurement noise, we introduce the following projection mechanism \cite{ioannou1995robust}:
\begin{align*}
\proj_{\Delta}(\hat{b}, \Gamma \sigma)  := \begin{cases}
\Gamma \sigma, &\text{if } \hat{b} \in \Pi_\Delta \text{ or }  \nabla_{\hat{b}} \mathcal{P}\T \Gamma \sigma \leq 0  \\
\left(I - \varrho(\hat{b})\Gamma \frac{\nabla_{\hat{b}} \mathcal{P}\nabla_{\hat{b}} \mathcal{P}\T}{\nabla_{\hat{b}} \mathcal{P}\T\Gamma \nabla_{\hat{b}} \mathcal{P}}\right) \Gamma \sigma,  &  \text{otherwise}
\end{cases},
\end{align*}
where $\hat{b},\sigma \in \mathbb{R}^n, \Gamma \in \mathbb{R}^{n\times n}$, $\mathcal{P}(\hat{b}) := \|\hat{b}\| - \Delta$, $\Pi_\Delta = \{\hat{b}| \mathcal{P}(\hat{b}) \leq 0\}$, $\Pi_{\Delta,\epsilon} = \{\hat{b} | \mathcal{P}(\hat{b}) \leq \epsilon\}$ and $\varrho(\hat{b}) := \min\{1,  \mathcal{P}(\hat{b}) /\epsilon\}$ for some positive parameters $\Delta$ and $\epsilon$. Given $\|\hat{b}(0)\| < \Delta$,
one can verify that the projection map $\proj_{\Delta}$ satisfies the following properties:
\begin{itemize}
	\item [1)] $\|\hat{b}(t)\|\leq \Delta + \epsilon $, for all $t \geq 0$  ;
	\vspace*{0.1cm}
	\item [2)] $\tilde{b}\T \Gamma^{-1} \proj_{\Delta}(\hat{b},\Gamma \sigma )\leq \tilde{b}\T  \sigma$;
	\vspace*{0.1cm}
	\item[3)] $ \|\proj_{\Delta}(\hat{b},\Gamma \sigma )\|  \leq \|\Gamma \sigma\| $.
\end{itemize}

Consider the three inertial vectors
$v_1^\mathcal{I}=[0~0~1]\T, v_2^\mathcal{I}=[\frac{\sqrt{3}}{2}~\frac{1}{2}~0]\T, v_3^\mathcal{I} = [-\frac{1}{2}~\frac{\sqrt{3}}{2}~0]\T$ and one landmark  $p^\mathcal{I}=[\frac{\sqrt{2}}{2}~ \frac{\sqrt{2}}{2}~ 2]\T$ are available. The initial pose for all the observers is taken as the identity \ie,  $\hat{g}(0)=I_4$. The system's initial conditions are taken as follows: $R(0) = \mathcal{R}_a(\pi,v)$ with $v=[1~0~0]\T \in \mathcal{E}(Q)$ and $p(0)=[0~1~ 4]\T$. The system is driven by the following linear and angular velocities: $v(t)= 2[\cos(t) ~ \sin(t)~0]\T$, $\omega(t)=[-\sin(t) ~ \cos(t) ~ 0]\T$. For the hybrid design, we choose $\theta^* = 2\pi/3$, $\delta = 1$ and $\mathbb{U}=\mathbb{E}(Q)$. The gain parameters involved in all the observers are taken as follows: $k_i= 1, i=1,\cdots,4$, $k_{\beta} = 1, k_\omega=1, k_v = 1$.

 The simulation results are given in Fig. \ref{fig:simulation1} - Fig. \ref{fig:simulation2}, from which one can clearly see the improved performance of the decoupled hybrid observer as compared to the non-decoupled hybrid observer and non-hybrid observer.
 
 \begin{figure}[H]
 	\centering
 	\subfloat[Rotation and position estimation errors.]{\includegraphics[width=0.41\linewidth]{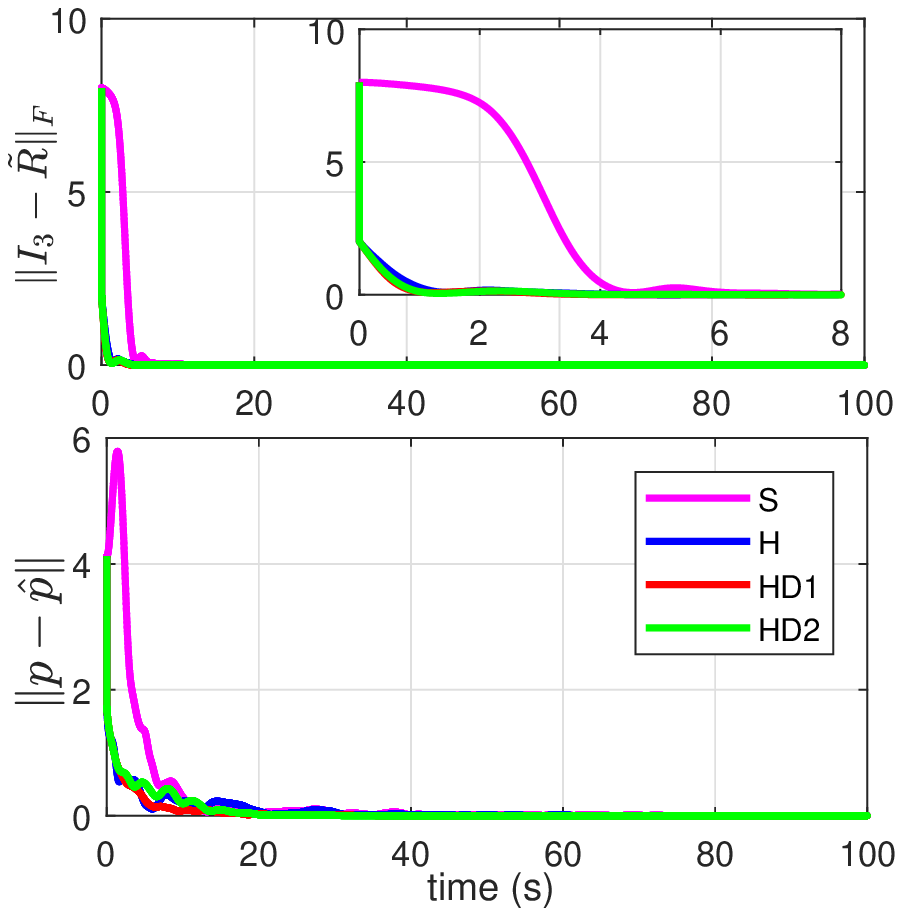}} 
 	\subfloat[Velocity-bias estimation errors.]{\includegraphics[width=0.41\linewidth]{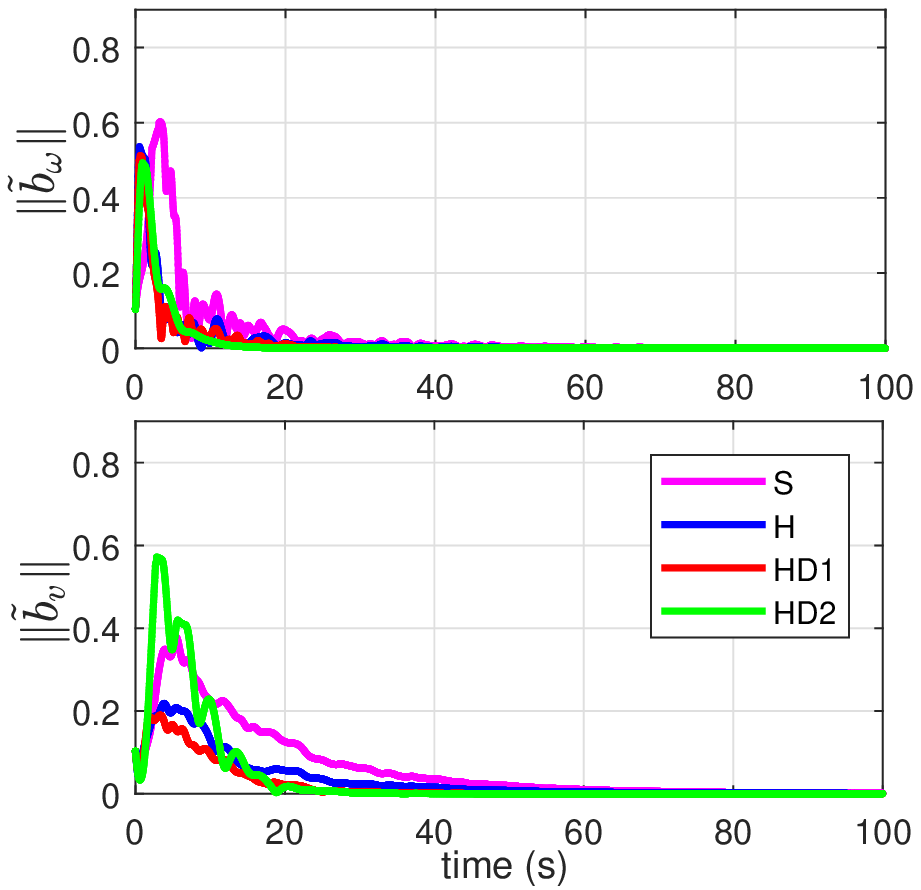}}
 	\caption{Simulation results with non-noisy output measurements and constant velocity-bias $b_{a} = [-0.02 ~ 0.02 ~~ 0.1~~0.2 ~ -0.1 ~ 0.01]\T$.}
 	\label{fig:simulation1}
 \end{figure}
 
 \begin{figure}[H]
 	\centering
 	\subfloat[Rotation and position estimation errors.]{\includegraphics[width=0.41\linewidth]{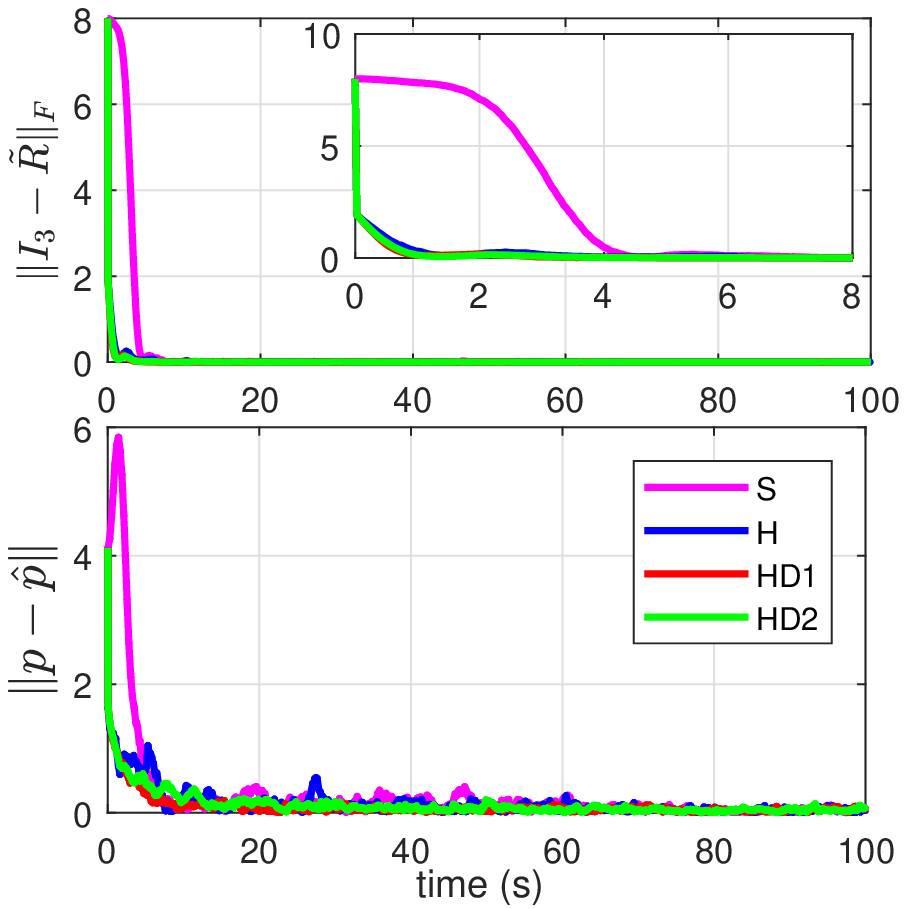}} 
 	\subfloat[Velocity-bias estimation errors.]{\includegraphics[width=0.41\linewidth]{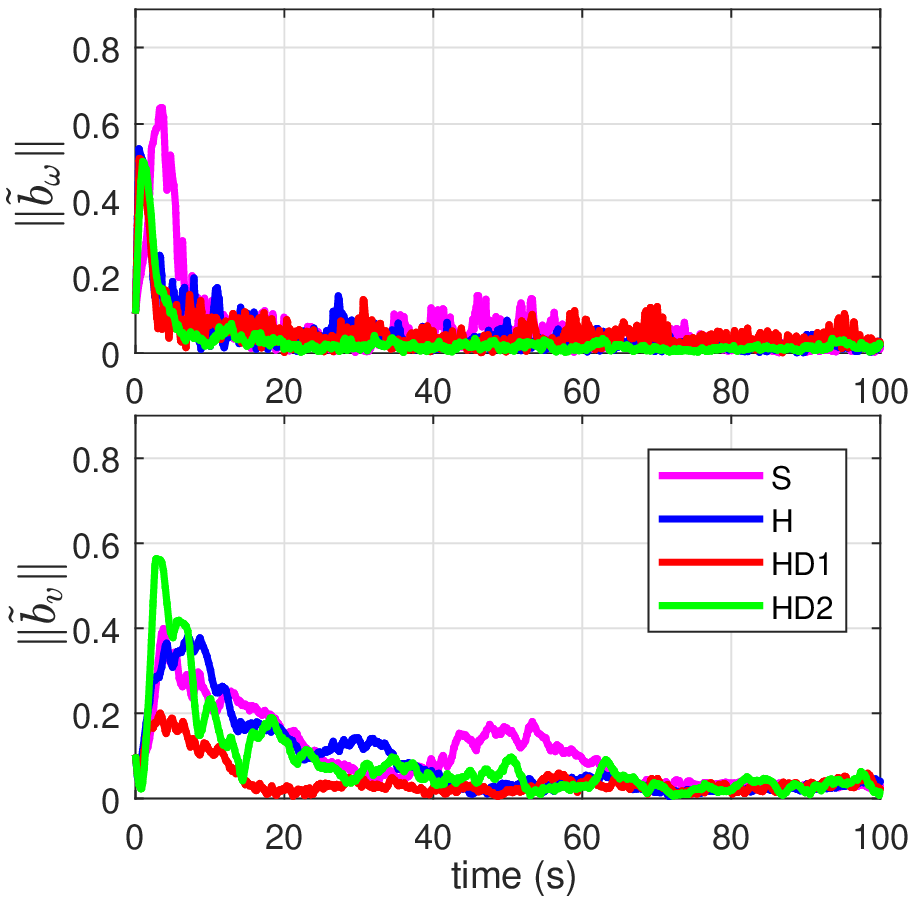}}
 	\caption{Simulation results with additive white Gaussian noise of variance 0.1 in the output measurements and time-varying velocity-bias $b_{a}(t) =\cos(0.02t) [-0.02 ~ 0.02 ~~ 0.1~~0.2 ~ -0.1 ~ 0.01]\T $.}
 	\label{fig:simulation2}
 \end{figure}

\section{Conclusion} \label{sec:conclusion}
A globally exponentially stable hybrid pose and velocity-bias estimation scheme evolving on $SE(3)\times \mathbb{R}^6$ has been proposed. The proposed observer is formulated in terms of homogeneous output measurements of known inertial vectors and landmark points. It relies on an observer-state jump mechanism designed to avoid the undesired critical point while ensuring a decrease of the potential function in the flow and jump sets. Moreover, an auxiliary coordinate transformation is introduced on the landmark measurements, and a modified observer, leading to a decoupled rotational error dynamics from the  translational dynamics, is proposed.

\bibliographystyle{ieeetran}
\bibliography{mybib}

\begin{thebibliography}{10}
\providecommand{\url}[1]{#1}
\csname url@samestyle\endcsname
\providecommand{\newblock}{\relax}
\providecommand{\bibinfo}[2]{#2}
\providecommand{\BIBentrySTDinterwordspacing}{\spaceskip=0pt\relax}
\providecommand{\BIBentryALTinterwordstretchfactor}{4}
\providecommand{\BIBentryALTinterwordspacing}{\spaceskip=\fontdimen2\font plus
\BIBentryALTinterwordstretchfactor\fontdimen3\font minus
  \fontdimen4\font\relax}
\providecommand{\BIBforeignlanguage}[2]{{%
\expandafter\ifx\csname l@#1\endcsname\relax
\typeout{** WARNING: IEEEtran.bst: No hyphenation pattern has been}%
\typeout{** loaded for the language `#1'. Using the pattern for}%
\typeout{** the default language instead.}%
\else
\language=\csname l@#1\endcsname
\fi
#2}}
\providecommand{\BIBdecl}{\relax}
\BIBdecl

\bibitem{wang2017globally}
M.~Wang and A.~Tayebi, ``Globally asymptotically stable hybrid observers design
  on {SE}(3),'' in \emph{Proceedings of the 56th IEEE Annual Conference on
  Decision and Control (CDC)}.\hskip 1em plus 0.5em minus 0.4em\relax IEEE,
  2017, pp. 3033--3038.

\bibitem{Shuster1981}
M.~D. Shuster and S.~D. Oh, ``Three-axis attitude determination from vector
  observations,'' \emph{Journal of Guidance and Control}, vol.~4, pp. 70--77,
  1981.

\bibitem{crassidis2007survey}
J.~L. Crassidis, F.~L. Markley, and Y.~Cheng, ``Survey of nonlinear attitude
  estimation methods,'' \emph{Journal of guidance, control, and dynamics},
  vol.~30, no.~1, pp. 12--28, 2007.

\bibitem{tayebi2006attitude}
A.~Tayebi and S.~McGilvray, ``Attitude stabilization of a {VTOL} quadrotor
  aircraft,'' \emph{IEEE Transactions on Control Systems Technology}, vol.~14,
  no.~3, pp. 562--571, 2006.

\bibitem{mahony2008nonlinear}
R.~Mahony, T.~Hamel, and J.-M. Pflimlin, ``Nonlinear complementary filters on
  the special orthogonal group,'' \emph{IEEE Transactions on automatic
  control}, vol.~53, no.~5, pp. 1203--1218, 2008.

\bibitem{rehbinder2003pose}
H.~Rehbinder and B.~K. Ghosh, ``Pose estimation using line-based dynamic vision
  and inertial sensors,'' \emph{IEEE Transactions on Automatic Control},
  vol.~48, no.~2, pp. 186--199, 2003.

\bibitem{tarek2017riccati}
T.~Hamel and C.~Samson, ``Riccati observers for the non-stationary {PnP}
  problem,'' \emph{IEEE Transactions on Automatic Control}, vol.~63, no.~3, pp.
  726--741, 2018.

\bibitem{moeini2016global}
A.~Moeini and M.~Namvar, ``Global attitude/position estimation using landmark
  and biased velocity measurements,'' \emph{IEEE Transactions on Aerospace and
  Electronic Systems}, vol.~52, no.~2, pp. 852--862, 2016.

\bibitem{bonnabel2009non}
S.~Bonnabel, P.~Martin, and P.~Rouchon, ``Non-linear symmetry-preserving
  observers on {Lie groups},'' \emph{IEEE Transactions on Automatic Control},
  vol.~54, no.~7, pp. 1709--1713, 2009.

\bibitem{lageman2010gradient}
C.~Lageman, J.~Trumpf, and R.~Mahony, ``Gradient-like observers for invariant
  dynamics on a {Lie group},'' \emph{IEEE Transactions on Automatic Control},
  vol.~55, no.~2, pp. 367--377, 2010.

\bibitem{baldwin2007complementary}
G.~Baldwin, R.~Mahony, J.~Trumpf, T.~Hamel, and T.~Cheviron, ``Complementary
  filter design on the special euclidean group {$SE(3)$},'' in
  \emph{Proceedings of the European Control Conference (ECC)}, 2007, pp.
  3763--3770.

\bibitem{hua2011observer}
M.-D. Hua, M.~Zamani, J.~Trumpf, R.~Mahony, and T.~Hamel, ``Observer design on
  the special euclidean group {$SE(3)$},'' in \emph{Proceedings of the 50th
  IEEE Conference on Decision and Control and European Control Conference
  (CDC-ECC)}, 2011, pp. 8169--8175.

\bibitem{vasconcelos2010nonlinear}
J.~F. Vasconcelos, R.~Cunha, C.~Silvestre, and P.~Oliveira, ``A nonlinear
  position and attitude observer on {$SE(3)$} using landmark measurements,''
  \emph{Systems \& Control Letters}, vol.~59, no.~3, pp. 155--166, 2010.

\bibitem{khosravian2015observers}
A.~Khosravian, J.~Trumpf, R.~Mahony, and C.~Lageman, ``Observers for invariant
  systems on {Lie groups} with biased input measurements and homogeneous
  outputs,'' \emph{Automatica}, vol.~55, pp. 19--26, 2015.

\bibitem{hua2015gradient}
M.-D. Hua, T.~Hamel, R.~Mahony, and J.~Trumpf, ``Gradient-like observer design
  on the special euclidean group {$SE(3)$} with system outputs on the real
  projective space,'' in \emph{Proceedings of the 54th IEEE Annual Conference
  on Decision and Control (CDC),}, 2015, pp. 2139--2145.

\bibitem{mayhew2011hybrid}
C.~G. Mayhew and A.~R. Teel, ``Hybrid control of rigid-body attitude with
  synergistic potential functions,'' in \emph{Proceedings of the American
  Control Conference (ACC)}, 2011, pp. 287--292.

\bibitem{wu2015globally}
T.-H. Wu, E.~Kaufman, and T.~Lee, ``Globally asymptotically stable attitude
  observer on {$SO(3)$},'' in \emph{Proceedings of the 54th IEEE Conference on
  Decision and Control (CDC)}, 2015, pp. 2164--2168.

\bibitem{berkane2017construction}
S.~Berkane and A.~Tayebi, ``Construction of synergistic potential functions on
  {$SO(3)$} with application to velocity-free hybrid attitude stabilization,''
  \emph{IEEE Transactions on Automatic Control}, vol.~62, no.~1, pp. 495--501,
  2017.

\bibitem{berkane2017CDC}
------, ``Attitude and gyro bias estimation using {GPS} and {IMU}
  measurements,'' in \emph{Proceedings of the 56th IEEE Conference on Decision
  and Control (CDC)}.\hskip 1em plus 0.5em minus 0.4em\relax IEEE, 2017, pp.
  2402--2407.

\bibitem{berkane2017hybrid2}
S.~Berkane, A.~Abdessameud, and A.~Tayebi, ``Hybrid attitude and gyro-bias
  observer design on {SO}(3),'' \emph{IEEE Transactions on Automatic Control},
  vol.~62, no.~11, pp. 6044--6050, 2017.

\bibitem{goebel2009hybrid}
R.~Goebel, R.~G. Sanfelice, and A.~R. Teel, ``Hybrid dynamical systems,''
  \emph{IEEE Control Systems}, vol.~29, no.~2, pp. 28--93, 2009.

\bibitem{goebel2012hybrid}
------, \emph{Hybrid Dynamical Systems: modeling, stability, and
  robustness}.\hskip 1em plus 0.5em minus 0.4em\relax Princeton University
  Press, 2012.

\bibitem{koditschek1989application}
D.~E. Koditschek, ``The application of total energy as a {Lyapunov} function
  for mechanical control systems,'' \emph{in Dynamics and Control of Multibody
  Systems, ser. Contemporary Mathematics, J. E. Marsden, P. S. Krishnaprasad,
  and J. C. Simo, Eds. Providence, RI: Amer. Math. Soc.}, vol.~97, pp.
  131--157, 1989.

\bibitem{ioannou1995robust}
P.~A. Ioannou and J.~Sun, \emph{Robust adaptive control}.\hskip 1em plus 0.5em
  minus 0.4em\relax Prentice-Hall Englewood Cliffs, NJ, 1995.

\bibitem{tayebi2013inertial}
A.~Tayebi, A.~Roberts, and A.~Benallegue, ``Inertial vector measurements based
  velocity-free attitude stabilization,'' \emph{IEEE Transactions on Automatic
  Control}, vol.~58, no.~11, pp. 2893--2898, 2013.

\bibitem{mayhew2011synergistic}
C.~G. Mayhew and A.~R. Teel, ``Synergistic potential functions for hybrid
  control of rigid-body attitude,'' in \emph{Proceedings of American Control
  Conference}, 2011, pp. 875--880.

\bibitem{berkane2017hybrid}
S.~Berkane, A.~Abdessameud, and A.~Tayebi, ``Hybrid global exponential
  stabilization on {$SO (3)$},'' \emph{Automatica}, vol.~81, pp. 279--285,
  2017.

\end{thebibliography}

\section{Appendices} \label{sec:appendix}

\subsection{Proof of Theorem \ref{theo:Theorem_1}}\label{sec:theo1}
	Let us consider the following real-valued function:
	\begin{equation}
	V(x) = \mathcal{U}(\tilde{g}) + \frac{1}{2}\tilde{b}_a \Gamma^{-1} \tilde{b}_a,
	\end{equation}
	Taking the time derivative of $V$, along the flows of trajectories of (\ref{eqn:closed_loop}), one can show that
	\begin{align}
	\dot{V}&=\left\langle \nabla_{\tilde{g}} \mathcal{U}(\tilde{g}),  \tilde{g}(\Ad{\hat{g}}(\tilde{b}_a - k_\beta  \beta ) )^\wedge \right\rangle_{\tilde{g}}  - \tilde{b}_a\T \Gamma^{-1}   \Gamma \sigma_b    \nonumber \\
	&\leq  \langle\langle \tilde{g}^{-1}\nabla_{\tilde{g}} \mathcal{U}(\tilde{g}),    (\Ad{\hat{g}}(\tilde{b}_a - k_\beta  \beta(\tilde{g}))  )^\wedge  \rangle\rangle -\tilde{b}_a\T  \sigma_b  \nonumber \\
	&\leq  -k_\beta \langle\langle \tilde{g}^{-1}\nabla_{\tilde{g}} \mathcal{U}(\tilde{g}),   (\psi(\tilde{g}^{-1} \nabla_{\tilde{g}} \mathcal{U}(\tilde{g})))^\wedge  \rangle\rangle  +  \langle\langle \Ad{\hat{g}}^*~ \tilde{g}^{-1}\nabla_{\tilde{g}} \mathcal{U}(\tilde{g}),    (\tilde{b}_a)^\wedge  \rangle\rangle -\tilde{b}_a\T  \sigma_b(\tilde{g}) \nonumber \\
	&\leq -k_\beta  \|\psi(\tilde{g}^{-1} \nabla_{\tilde{g}} \mathcal{U}(\tilde{g})) \|^2   + \tilde{b}_a\T \Ad{\hat{g}}^*  \psi(\tilde{g}^{-1} \nabla_{\tilde{g}} \mathcal{U}(\tilde{g})) - \tilde{b}_a\T  \sigma_b  \nonumber \\
	&=  -k_\beta  \| \psi(\tilde{g}^{-1} \nabla_{\tilde{g}} \mathcal{U}(\tilde{g})) \|^2 , \label{eqn:dot_V}
	\end{align}
	where we made use of (\ref{eqn:psi_property}) and $\Ad{\hat{g}} \Ad{\hat{g}^{-1}} = I_6$. Then, for any $ x\in \mathcal{F}_c$, $V$ is non-increasing  along the flows of (\ref{eqn:closed_loop}). Using the fact $\mathcal{F}_c\cup \mathcal{J}_c=\mathcal{S}$ and the definitions of $\mathcal{X}_{\mathcal{U}}$ and $\mathcal{J}_c$, from (\ref{eq2_theo1}) one has $\mathcal{X}_\mathcal{U} \times \mathbb{R}^6 \times SE(3)\times \mathbb{R}^6 \times \mathbb{R}_{\geq 0}\subseteq \mathcal{J}_c$. 
	
	For any $x\in \mathcal{J}_c$, one can show that
	\begin{align}
	V(x^+)  - V(x) & = \mathcal{U}(\tilde{g}^+) - \mathcal{U}(\tilde{g}) \nonumber \\
	& \leq \min_{g_q\in \mathbb{Q}}  \mathcal{U}(\tilde{g}g_q) - \mathcal{U}(\tilde{g}) \leq - \delta, \label{eqn:decrease_V}
	\end{align}
	which implies that $V$ is strictly decreasing over the jumps of (\ref{eqn:closed_loop}). Thus, in view of (\ref{eqn:dot_V}) and (\ref{eqn:decrease_V}), one can easily show that
	\begin{equation}
	0 < V(x(t,j)) \leq V(x(0,0)) - j \delta ,
	\end{equation}
	which leads to $0\leq j\leq J_{\max}: = \left\lceil{V(x(0,0))}/{\delta}~\right\rceil$ with $\lceil \cdot \rceil$ denoting the ceiling function. This implies that the number of jumps is finite.
	
	To show exponential stability, let us consider the following Lyapunov function candidate:
	\begin{align}
	\mathcal{L}(x)  =  V(x) - z\T U \tilde{b}_a,
	\end{align}
	where $U := \text{diag}(\mu_1I_3, \mu_2 I_3)$ with $ \mu_1 ,\mu_2 > 0$ and $z := [\psi_a(\tilde{R})\T \hat{R}, \tilde{p}\T R ]\T$. Let $e :=[|\tilde{g}|_I, \|\tilde{b}_\omega\|, \|\tilde{b}_v\|]\T$ and $e_i$ be the $i$-th elements of $e$. From \eqref{eq1_theo1}, one obtains
		\begin{align*}
		\mathcal{L}
		& \leq    \alpha_2 e_1^2  + \frac{1}{k_\omega}e_2^2 + \frac{1}{k_v}e_3^2 + e_1(\frac{\sqrt{2}\mu_1 }{2} e_2+ \mu_2 e_3) \\
		\mathcal{L}
		& \geq    \alpha_1 e_1^2  + \frac{1}{k_\omega}e_2^2 + \frac{1}{k_v}e_3^2 - e_1(\frac{\sqrt{2}\mu_1 }{2} e_2+ \mu_2  e_3),
		\end{align*}
		which implies
	\begin{align*}
	e\T \underbrace{\begin{bmatrix}
			\alpha_1 &   \frac{-\sqrt{2}}{4}\mu_1 &    \frac{-1}{2} \mu_2\\
			\frac{-\sqrt{2}}{4}\mu_1 & \frac{1}{k_\omega} &0 \\
			\frac{-1}{2} \mu_2 & 0 & \frac{1}{k_v}
			\end{bmatrix}}_{P_1} e\leq  \mathcal{L}(x) \leq e\T \underbrace{\begin{bmatrix}
			\alpha_2  &   \frac{\sqrt{2}}{4}\mu_1   &  \frac{1}{2} \mu_2\\
			\frac{\sqrt{2}}{4}\mu_1 & \frac{1}{k_\omega}  & 0 \\
			\frac{1}{2} \mu_2\ & 0 & \frac{1}{k_v}
			\end{bmatrix}}_{P_2}e.
	\end{align*}
	Using the fact
	$
	|x|_{\bar{\mathcal{A}}}^2
	=|\tilde{g}|_I^2 + \|\tilde{b}_a\|^2 = \|e\|^2,
	$
	one obtains the following inequalities:
	\begin{equation}
	\lambda_{\min}^{P_1} |x|_{\bar{\mathcal{A}}}^2 \leq \mathcal{L}(x) \leq  \lambda_{\max}^{P_2}  |x|_{\bar{\mathcal{A}}}^2.  \label{eqn:inequalities}
	\end{equation}
	Let $\psi_{\tilde{R}} := \psi_a(\tilde{R})$. One verifies that $\|\psi_{\tilde{R}}\|^2  \leq \frac{1}{2}\|I_3-\tilde{R}\|_F^2 \leq 4$ and $\|\psi_{\tilde{R}}\|^2  \leq \frac{1}{2} |\tilde{g}|_I^2$.  From (\ref{eqn:closed_loop}), one has
	\begin{align}
	\dot{\tilde{R}}~~ & = \tilde{R} (- k_\beta\psi_\omega +\hat{R}\tilde{b}_\omega )^\times , \label{eqn:dot_R}\\
	\dot{\tilde{p}}~~ & = \tilde{R}(-k_\beta\psi_v+\hat{p}^\times \hat{R} \tilde{b}_\omega + \hat{R}\tilde{b}_v ), \label{eqn:dot_pe} \\
	\dot{\psi}_{\tilde{R}} & = E(\tilde{R}) (- k_\beta\psi_\omega +\hat{R}\tilde{b}_\omega), \label{eqn:dot_psi_R}
	\end{align}
	where $E(\tilde{R}):=\frac{1}{2}(\tr(\tilde{R})-\tilde{R}\T)$. The arguments of $\psi$ have been omitted for simplicity, and $\psi_\omega, \psi_v$ are given by $\psi := [\psi_\omega\T, \psi_v\T]\T$.  From Lemma 2 in \cite{berkane2017hybrid2}, one has $\|E(\tilde{R})\|_F \leq \sqrt{3}$ and $v\T(I_3-E(\tilde{R}))v \leq \frac{1}{4}\|I_3-\tilde{R}\|_F^2 \|v\|^2\leq \frac{\sqrt{2}}{2}\|I_3-\tilde{R}\|_F \|v\|^2 \leq \frac{\sqrt{2}}{2} |\tilde{g}|_I \|v\|^2$ for all $v\in \mathbb{R}^3$. 	
	Define the constants $c_\omega := \sup_{t\geq 0} \|\omega(t)\|$ and $c_p: = \sup_{t\geq 0} \|p(t)\|$. Since $\tilde{b}_\omega$ is bounded both in the flow and jump sets, there exists a constant $c_{b_\omega}: = \sup_{(t,j)\succeq (0,0)}\|\tilde{b}_\omega(t,j)\|$. In view of (\ref{eqn:dot_R})-(\ref{eqn:dot_psi_R}), the time-derivative of the cross term $\mathfrak{X}: = z\T U \tilde{b}_a=-\mu_1 \tilde{b}_\omega\T \hat{R}\T \psi_{\tilde{R}}-\mu_2 \tilde{p}\T R \tilde{b}_v$ is obtained as
	\begin{align*}
	\dot{\mathfrak{X}} 
	&=  -\dot{z}\T U \tilde{b}_a - z\T U \dot{\tilde{b}}_a \\
	& = -\mu_1\tilde{b}_\omega\T (\dot{\hat{R}}\T \psi_{\tilde{R}} + \hat{R}\T \dot{\psi}_a(\tilde{R})) - \mu_2\tilde{b}_v\T (R\T \dot{\tilde{p}} + \dot{R}\T \tilde{p})   - z\T U \dot{\tilde{b}}_a  \\
	& =  \mu_1\tilde{b}_\omega\T (\omega  -\tilde{b}_\omega + k_\beta \psi_\omega)^\times \hat{R}\T \psi_{\tilde{R}}    -\mu_1 \tilde{b}_\omega \T \hat{R}\T E(\tilde{R}) (-k_\beta\psi_\omega +\hat{R}\tilde{b}_\omega ) + \mu_2\tilde{b}_v \T(\omega)^\times R\T \tilde{p}   \\
	& ~~~ - \mu_2\tilde{b}_v \T \hat{R}\T   (- k_\beta\psi_v  +\hat{p}^\times \hat{R} \tilde{b}_\omega + \hat{R}\tilde{b}_v )    + z\T U   \Gamma \Ad{g}^* \Ad{\tilde{g}^{-1}}^* \psi\|\\
	& \leq - \mu_1 \|\tilde{b}_\omega\|^2 - \mu_2\|\tilde{b}_v\|^2 +\mu_1 c_\omega \|\psi_{\tilde{R}}\|\|\tilde{b}_\omega\|  + \mu_1k_\beta  \|\tilde{b}_\omega\| \|\psi_\omega\|\|\psi_{\tilde{R}}\| \\
	& ~~~  +   \mu_1 \tilde{b}_\omega\T \hat{R}\T(I_3-E(\tilde{R})) \hat{R}\tilde{b}_\omega   + \mu_1 k_\beta \|\tilde{b}_\omega\| \|E(\tilde{R})\|_F \|\psi_\omega\|  + \mu_2 c_\omega  \|\tilde{b}_v\| \|\tilde{p}\|   \\
	& ~~~+ \mu_2 k_\beta \|\tilde{b}_v\| \|\psi_v\|+ \mu_2 c_{b_\omega}\|\tilde{b}_v\|\|\tilde{p}\|  + \mu_2 c_p \|\tilde{b}_v\| \|\tilde{b}_\omega\|  +  k_{\Gamma} \|U\|_2 \|z\| \|\Ad{g}^*\|_F \|\Ad{\tilde{g}^{-1}}^* \psi\| \\
	& \leq - \mu_1 \|\tilde{b}_\omega\|^2 - \mu_2\|\tilde{b}_v\|^2 +  2\sqrt{\alpha_4}  \mu_1k_\beta  \|\tilde{b}_\omega\| |\tilde{g}|_I + {\textstyle \frac{\sqrt{2}}{2}}\mu_1 c_\omega  |\tilde{g}|_I \|\tilde{b}_\omega\|   \\
	& ~~~  + {\textstyle\frac{\sqrt{2}}{2}}  \mu_1 c_{b_\omega} |\tilde{g}|_I \|\tilde{b}_\omega\|    +  \sqrt{3\alpha_4}\mu_1 k_\beta \|\tilde{b}_\omega\|  |\tilde{g}|_I + \mu_2 c_\omega  \|\tilde{b}_v\| |\tilde{g}|_I  +\sqrt{\alpha_4}  \mu_2 k_\beta \|\tilde{b}_v\| |\tilde{g}|_I  \\
	& ~~~+ \mu_2 c_{b_\omega}\|\tilde{b}_v\||\tilde{g}|_I + \mu_2 c_p \|\tilde{b}_v\| \|\tilde{b}_\omega\| + k_{\Gamma} c_g\sqrt{\alpha_5} (u_1 + u_2)|\tilde{g}|_I ^2 \\
	& \leq - \mu_1 \|\tilde{b}_\omega\|^2 - \mu_2\|\tilde{b}_v\|^2    + \mu_1({\textstyle \frac{\sqrt{2}}{2}} c_\omega + 2\sqrt{\alpha_4} k_\beta   +{\textstyle\frac{\sqrt{2}}{2}}   c_{b_\omega}+  \sqrt{3\alpha_4} k_\beta ) |\tilde{g}|_I \|\tilde{b}_\omega\|    \\
	& ~~~  + \mu_2 ( c_\omega    +\sqrt{\alpha_4}   k_\beta +  c_{b_\omega}) \|\tilde{b}_v\| |\tilde{g}|_I  + \mu_2 c_p \|\tilde{b}_v\| \|\tilde{b}_\omega\| + k_{\Gamma} c_g\sqrt{\alpha_5} (u_1 + u_2)|\tilde{g}|_I ^2 ,
	\end{align*} 	
	where $k_\Gamma:= \|\Gamma\|_F, c_g:=\|\Ad{g}^*\|_F= \sqrt{6+2c_p^2}$, and the following facts have been used: $|\tilde{g}|_I^2 = \|I_3-\tilde{R}\|^2+ \|\tilde{p}\|^2$, $\|\hat{p}\| = \|\tilde{R}\T(p- \tilde{p}) \| \leq c_p+ \|\tilde{p}\|$, $\|\psi_\omega\|^2+\|\psi_v\|^2 \leq \alpha_4 |\tilde{g}|_I^2$, $\|z\| = \|\psi_{\tilde{R}}\| + \|\tilde{p}\| \leq |\tilde{g}|_I$, $\|U\|_2 \leq  (\mu_1 + \mu_2)$ and Eq. (\ref{eq3_theo1}).  Let $ c_1 := \frac{\sqrt{2}}{2}c_\omega + 2k_\beta  \sqrt{\alpha_4} + \frac{\sqrt{2} c_{b_\omega}}{2} +   k_\beta \sqrt{3\alpha_4}, c_2:=k_\beta \sqrt{\alpha_4}+ c_{b_\omega} + c_\omega, c_3: = k_{\Gamma}\sqrt{\alpha_5} c_g $. Then, the time-derivative of $\mathfrak{X}$ satisfies
	\begin{align}
	\dot{\mathfrak{X}}
	& \leq - \mu_1 \|\tilde{b}_\omega\|^2 - \mu_2 \|\tilde{b}_v\|^2 + \mu_1 c_1 \|\tilde{b}_\omega\| |\tilde{g}|_I + \mu_2 c_2 \|\tilde{b}_v\||\tilde{g}|_I   \nonumber\\
	& ~~~ + ( \mu_1 c_3 + \mu_2 c_3 )|\tilde{g}|_I ^2      + \mu_2 c_p\|\tilde{b}_v\| \|\tilde{b}_\omega\|, \label{eqn:derivative_X}
	\end{align} 	
	Consequently, in view of (\ref{eqn:dot_V}) and (\ref{eqn:derivative_X}), one obtains
	\begin{align*}
	\dot{\mathcal{L}}
	& \leq  - 2k_\beta \alpha_3 e_1^2  + ( \mu_1 c_3 + \mu_2 c_3 )e_1^2   - \mu_1 \|\tilde{b}_\omega\|^2 - \mu_2 \|\tilde{b}_v\|^2   \\
	& ~~~  + \mu_1 c_1 \|\tilde{b}_\omega\| e_1+ \mu_2 c_2 \|\tilde{b}_v\| e_1     + \mu_2 c_p\|\tilde{b}_v\| \|\tilde{b}_\omega\|   \\
	& = -e\T \begin{bmatrix}
		2k_\beta \alpha_3   - \mu_1 c_3 - \mu_2 c_3 &  -\frac{1}{2} \mu_1 c_1 &  -\frac{1}{2} \mu_2 c_2 \\
		-\frac{1}{2} \mu_1 c_1 &  \mu_1 & -\frac{1}{2} \mu_2 c_p\\
		-\frac{1}{2} \mu_2 c_2 &-\frac{1}{2} \mu_2 c_p  &  \mu_2
		\end{bmatrix} e \\
	& = -e_{12}\T P_{31} e_{12}   -e_{13}\T  P_{32} e_{13}  - e_{23}\T  P_{33} e_{23},
	\end{align*} 	
	where $e_{ij} := [e_i,e_j]\T, i,j \in \{1,2,3\}$ with $e_i$ denoting the $i$-th elements of $e$, and
	\begin{align*}
	P_{31} &:= \begin{bmatrix}
	k_\beta \alpha_3  - \mu_1 c_3 & -\frac{1}{2} \mu_1 c_1 \\
	-\frac{1}{2} \mu_1 c_1 &  \frac{1}{2}\mu_1
	\end{bmatrix}, 
	P_{32}  := \begin{bmatrix}
	k_\beta \alpha_3 -   \mu_2 c_3 & -\frac{1}{2} \mu_2 c_2 \\
	-\frac{1}{2} \mu_2 c_2 &  \frac{1}{2} \mu_2
	\end{bmatrix}, P_{33} := \begin{bmatrix}
	\frac{1}{2} \mu_1 & -\frac{1}{2} \mu_2 c_p \\
	-\frac{1}{2} \mu_2 c_p & \frac{1}{2} \mu_2
	\end{bmatrix}.
	\end{align*}
	To guarantee that the matrices $P_1,P_2,P_{31},P_{32}$ and $P_{33}$ are positive definite, the $\mu_1$ and $\mu_2$ are chosen as follows:
	\begin{align*}
		\mu_1 & < \frac{2\sqrt{\alpha_1}}{\sqrt{k_\omega}}, \quad~~ \mu_2   < \frac{\sqrt{2\alpha_1}}{ \sqrt{k_v}}   ~~~~~~~~~~ \text{ for } P_1, P_2>0	\\
		\mu_1  &< \frac{k_\beta\alpha_3}{c_3 + \frac{1}{4}c_1^2} ,  \quad
		\mu_2   < \frac{k_\beta\alpha_3}{c_3 + \frac{1}{4}c_2^2}   ~~~~~~\text{ for } P_{31}, P_{32}>0 \\
		\mu_2 & <  \frac{1}{c_p^2} \mu_1  ~~~~~~~~~~~~~~~~~~~~~~~~~~~~~~~~\text{ for } P_{33}>0 
	\end{align*}
		which are equivalent to
	\begin{align*}
	&0 < \mu_1 < \min\left\{\frac{2\sqrt{\alpha_1}}{\sqrt{k_\omega}} , \frac{k_\beta\alpha_3}{c_3 + \frac{1}{4}c_1^2} \right\} \\
	&0 < \mu_2 < \min\left\{\frac{\sqrt{2\alpha_1}}{ \sqrt{k_v}} ,\frac{k_\beta\alpha_3}{c_3+ \frac{1}{4}c_2^2} , \frac{1}{c_p^2} \mu_1 \right\}.
	\end{align*}
	One concludes that
	\begin{equation}
	\dot{\mathcal{L}}(x) \leq -\lambda_F \mathcal{L}(x),\quad  x\in \mathcal{F}_c, \label{eqn:decrease_L_F}
	\end{equation}
	with $\lambda_F : =\min\{\lambda_{\min}^{P_{31}},\lambda_{\min}^{P_{32}},\lambda_{\min}^{P_{33}}\}/\lambda_{\max}^{P_2}$. On the other hand, from (\ref{eqn:dot_V}) and (\ref{eqn:decrease_V}), it is clear that $\tilde{b}_a$ is bounded in the flow and jump sets. Hence, there exists a constant $ c_{b_a}: = \sup_{(t,j)\succeq (0,0)}\|\tilde{b}_a(t,j)\|$. Let $g_q=\mathcal{T}(R_q,p_q)\in \mathbb{Q}$. Using the facts: $\tilde{R}^+ = \tilde{R}R_q$ and $\tilde{p}^+ = \tilde{p} + \tilde{R}p_q$, one has
	\begin{align*}
	\|z^+ - z\| 
	& = \left\|\begin{bmatrix}
		(\hat{R}^+)\T \psi_a(\tilde{R}^+)   - \hat{R} \T\psi_a(\tilde{R})\\
		R\T\tilde{p}^+ - R\T\tilde{p}
		\end{bmatrix}\right\| \\
	& \leq \|R_q \psi_a(\tilde{R}R_q)   -  \psi_a(\tilde{R})\|  +\| \tilde{p}^+ -  \tilde{p} \|  \leq 4+ \|p_q\| ,
	\end{align*}	
	which implies
	\begin{align*}
	\mathcal{L}(x^+)- \mathcal{L}(x)  &= V(x^+) - V(x)  + (z^+)\T U \tilde{b}_a  - z\T U \tilde{b}_a \\
	& \leq -\delta + (\mu_1 + \mu_2)c_4,
	\end{align*}
	where $c_4: = (4+  \|p_q\|  ) c_{b_a}$.  Choosing $\mu_1$ and $\mu_2$ such that
	\begin{align*}
	0 < \mu_1 < \min \left\{\frac{2\sqrt{\alpha_1}}{\sqrt{k_\omega}} , \frac{k_\beta\alpha_3}{c_3 + \frac{1}{4}c_1^2}, \frac{\delta}{2c_4} \right\}, \\
	0 < \mu_2 < \min\left\{\frac{\sqrt{2\alpha_1}}{ \sqrt{k_v}} ,\frac{k_\beta\alpha_3}{c_3+ \frac{1}{4}c_2^2} , \frac{1}{c_p^2} \mu_1, \mu_1  \right\},
	\end{align*}
	there exists a constant $\delta^*: = \delta - 2\mu_1 c_4 > 0$ such that one has
	\begin{equation}
	\mathcal{L}(x^+) \leq  \mathcal{L}(x)  - \delta^*,\quad  x\in \mathcal{J}_c.  \label{eqn:decrease_L_J}
	\end{equation}
	In view of (\ref{eqn:inequalities}), (\ref{eqn:decrease_L_F}) and (\ref{eqn:decrease_L_J}), one concludes
	\begin{align*}
	&\lambda_{\min}^{P_1} |x|_{\bar{\mathcal{A}}}^2 \leq \mathcal{L}(x) \leq  \lambda_{\max}^{P_2}  |x|_{\bar{\mathcal{A}}}^2, \quad, x\in \mathcal{S}\\
	&\dot{\mathcal{L}}(x) \leq -\lambda_F \mathcal{L}(x),\quad  x\in \mathcal{F}_c, \\
	&\mathcal{L}(x^+)\leq \exp(-\lambda_J)\mathcal{L}(x), \quad x\in \mathcal{J}_c,
	\end{align*}
	 with $\lambda_J: = -\ln (1-\delta^*/\mathcal{L}(0,0))$. Consequently, one obtains
	\begin{equation}
	\mathcal{L}(t,j) \leq \exp(-2\lambda (t+j)) \mathcal{L}(0,0) \label{eqn:L_t_j},
	\end{equation}
	where $\lambda: = \frac{1}{2}\min\{\lambda_F,\lambda_J\}$. From (\ref{eqn:inequalities}) and (\ref{eqn:L_t_j}), for each $(t,j)\in \dom{x}$ one has
	\begin{equation}
	|x(t,j)|_{\bar{\mathcal{A}}}\leq  k\exp\left(-\lambda (t+j) \right)|x(0,0)|_{\bar{\mathcal{A}}},
	\end{equation}
	where $k: = \sqrt{\scalemath{0.8}{{\lambda_{\max}^{P_2}}/{\lambda_{\min}^{P_1}}}}$. Since $\mathcal{F}_c \cup \mathcal{J}_c = \mathcal{S}$, the number of jumps if finite and there is no finite escape-time, one concludes that the solution to the hybrid system $\mathcal{H}$ is complete as per  Proposition 2.10 in \cite{goebel2012hybrid}.
	This completes the proof.

\subsection{Proof of Lemma \ref{lemma:Q}}
 	From the definition of matrix $\mathbb{A}$ in (\ref{eqn:definition_A}), one can show that
	\begin{align*}
	\mathbb{A} &=\sum_{i=1}^n k_i r_i r_i\T = \begin{bmatrix}
	\sum_{i=1}^{n_1} k_i p_i^\mathcal{I}(p_i^\mathcal{I})\T + \sum_{j=1}^{n_2}k_{j+n_1} v_j^\mathcal{I}(v_j^\mathcal{I})\T & \sum_{i=1}^{n_1} k_i p_i^\mathcal{I} \\
	\sum_{i=1}^{n_1} k_i (p_i^\mathcal{I})\T & \sum_{i=1}^{n_1} k_i 
	\end{bmatrix} ,
	\end{align*}
	which implies:
	\begin{align*}
	A &:= \sum_{i=1}^{n_1} k_i p_i^\mathcal{I}(p_i^\mathcal{I})\T + \sum_{j=1}^{n_2}k_{j+n_1} v_j^\mathcal{I}(v_j^\mathcal{I})\T, \\
	b &:= \sum_{i=1}^{n_1} k_i p_i^\mathcal{I}, \qquad 
	d  := \sum_{i=1}^{n_1} k_i.
	\end{align*}	
	From Assumption \ref{assump:1}, there exists at least one gain parameter $k_i > 0$ for all $i=1,2,\cdots,n_1$. Hence, one has $d >0$. Let $\alpha_i = k_i/(\sum_{i=1}^{n_1}k_i)$, and it is easy to verify that
	$
	p^{\mathcal{I}}_c =   \sum_{i=1}^{n_1} \alpha_i p^{\mathcal{I}}_i =bd^{-1}.
	$
	In view of (\ref{eqn:p_c}) and (\ref{eqn:definition_A}), one has
	\begin{align*}
	Q &= A - bb\T d^{-1} \nonumber  \\
	& =  \sum_{i=1}^{n_1} k_ip_i^{\mathcal{I}}\left(p_i^{\mathcal{I}}\right)\T  + \sum_{i=1}^{n_2} k_{j+n_1}v_i^{\mathcal{I}}\left(v_i^{\mathcal{I}}\right)\T   -  \left(\sum_{i=1}^{n_1} k_ip_i^{\mathcal{I}}\right)\left(\sum_{i=1}^{n_1} k_ip_i^{\mathcal{I}}\right)\T \left(\sum_{i=1}^{n_1} k_i \right)^{-1}  \\
	& =  \sum_{i=1}^{n_1} k_ip_i^{\mathcal{I}}\left(p_i^{\mathcal{I}}\right)\T   -   \sum_{i=1}^{n_1} k_i   p_c^{\mathcal{I}}\left(p_c^{\mathcal{I}}\right)\T + \sum_{i=1}^{n_2} k_{j+n_1}v_i^{\mathcal{I}}\left(v_i^{\mathcal{I}}\right)\T.
	\end{align*}
	Substituting $p_i^{\mathcal{I}} = \bar{v}_i^{\mathcal{I}}  + p_c^{\mathcal{I}}$ in (\ref{eqn:p_c}) and $\sum_{i=1}^{n_1} k_i \bar{v}_i^{\mathcal{I}} = 0$, one obtains
	\begin{align*}
	Q &   =  \sum_{i=1}^{n_1} k_i\left(\bar{v}_i^{\mathcal{I}}  + p_c^{\mathcal{I}}\right)\left(\bar{v}_i^{\mathcal{I}}  + p_c^{\mathcal{I}}\right)\T -   \sum_{i=1}^{n_1} k_i   p_c^{\mathcal{I}}\left(p_c^{\mathcal{I}}\right)\T  + \sum_{i=1}^{n_2} k_{j+n_1}v_i^{\mathcal{I}}\left(v_i^{\mathcal{I}}\right)\T    \nonumber \\
	& =  \sum_{i=1}^{n_1} k_i(\bar{v}_i^{\mathcal{I}})(\bar{v}_i^{\mathcal{I}} )\T + \sum_{i=1}^{n_2} k_{j+n_1}v_i^{\mathcal{I}}(v_i^{\mathcal{I}})\T.
	\end{align*}
	It is straightforward to verify that $Q$ is positive semi-definite from Assumption \ref{assump:1}. Then, one can further show that
	\begin{align}
	\bar{Q}   
	&  = \frac{1}{2}  \sum_{i=1}^{n_1} k_i  ((\bar{v}_i^{\mathcal{I}})^\times )^2 +  \frac{1}{2}  \sum_{j=1}^{n_2} k_{j+n_1} ((v_j^{\mathcal{I}})^\times)^2,
	\end{align}
	which is positive definite from lemma 2 in \cite{tayebi2013inertial}.
	This completes the proof.

\subsection{Proof of Lemma \ref{lemma:Delta}} \label{app:proof_lemma_7}
Let $(\lambda_v^Q, v)$ be the pair of eigenvalue and eigenvector of the matrix $Q$, \textit{i.e.}, $Qv = \lambda_v^Q  v$. Then, one has
\begin{equation}
\Delta_Q(u_q,v) = \tr(Q) - u_q\T Q u_q- 2\lambda_v^Q(1 - (u_q\T v)^2). \label{eqn: Delta_v}
\end{equation} 	
\begin{itemize} 		
	\item [1)] For the sake of analysis, let $\mathbb{E}(Q)=\{v_1,v_2,v_3\}$ with $v_1,v_2,v_3$ denoting the three orthogonal eigenvector of matrix $Q$.
	Using the fact $\mathbb{E}(Q)\subseteq \mathbb{U}$, for any $v\in \mathcal{E}(Q) $ one has
	\begin{align*}
	\max_{u_q\in \mathbb{U}} \Delta_Q(u_q,v) \geq  \max_{u_q\in \mathbb{E}(Q)} \Delta_Q(u_q,v).
	\end{align*}
	\begin{itemize}
		\item [a)] For the case $Q= \lambda_1^Q I_3$, for any $v\in \mathcal{E}(Q)$ and $u_q\in \mathbb{E}(Q)$, from (\ref{eqn: Delta_v}) one has
		\begin{align*}
		\Delta_Q(u_q,v) &= \tr(Q) - u_q\T Q u_q- 2\lambda^Q(1 - (u_q\T v)^2)  \\
		&=  2\lambda_1^Q   (u_q\T v)^2 , \quad q=1,2,3.
		\end{align*}
		Using the fact $\sum_{q=1}^3(u_q\T v)^2 = 1$ for any $v\in \mathbb{S}^2$, one has
		$
		\max_{u_q\in \mathbb{U}} (u_q\T v)^2 \geq \frac{1}{3}
		$
		which implies
		$
		\max_{u_q\in \mathbb{U}}u_q\T W_v u_q \geq \frac{2}{3}  \lambda_1^Q.
		$
		Then, for any $v\in \mathcal{E}(Q)$, one can show that
		\begin{align}
		\Delta_Q^* = \min_{v\in \mathcal{E}(Q)} \max_{u_q\in \mathbb{U}}\Delta(u_q,v)  \geq   \frac{2}{3}  \lambda_1^Q.  \label{eqb:solution_delta_U1}
		\end{align}
		
		\item [b)] For the case $\lambda_1^Q = \lambda_2^Q \neq \lambda_3^Q>0$, without loss of generality let $\mathcal{E}(Q) = \{v_3\}\cup \mathcal{S}_{12} $ with $\mathcal{S}_{12} := \text{span}\{v_1,v_2\}\cap \mathbb{S}^2$. Then, if $v = v_3$, from (\ref{eqn: Delta_v}) one has
		\begin{align*}
		\Delta(u_q,v) 	& = \tr(Q)  - u_q\T Qu_q - 2\lambda_3^Q (1 - (u_q\T v_3 )^2)  \\
		& = \begin{cases}
		\tr(Q)   - \lambda_{1}^Q  - 2\lambda_{3}^Q, &  u_q \in \{v_1, v_2\}\\
		\tr(Q) -  \lambda_{3}^Q,  & u_q=v_3
		\end{cases}.
		\end{align*}
		Consequently, one has
		\begin{align}
		\max_{u_q\in \mathbb{U}} \Delta(u_q,v) =  2\lambda_{12}^Q , \quad  v= v_3 \label{eqn:vinv3}.
		\end{align}
		If $v \in \mathcal{S}_{12}$, one has
		\begin{align*}
		\Delta(u_q,v) 	& = \tr(Q)  - u_q\T Qu_q - 2\lambda^Q_{1} (1 - (u_q\T v )^2)  \\
		& = \begin{cases}
		\lambda_3^Q -\lambda_{1}^Q  + 2\lambda_{1}^Q (u_q\T v )^2 & u_q \in  \{v_1, v_2\} \\
		0,  & u_q = v_3
		\end{cases}.
		\end{align*}
		Using the fact that $u_1$ and $u_2$ are orthogonal, one has $\sum_{q=1}^2(u_q\T v)^2 = 1$ for any $v\in \mathcal{S}_{12}$. 
		Then, one has
		\begin{align}
		\max_{u_q\in \mathbb{U}} \Delta(u_q,v)  \geq \lambda_3^Q   , \quad   v\in \mathcal{S}_{12} \label{eqn:vinv12}.
		\end{align}
		Hence, in view of (\ref{eqn:vinv3}) and (\ref{eqn:vinv12}), one obtains
		\begin{align}
		\min_{v\in \mathcal{E}(Q)} \max_{u_q\in \mathbb{U}} \Delta(u_q,v)   \geq  \min\left\{2\lambda^Q_{12}, \lambda_3^Q \right\}.  \label{eqb:solution_delta_U2}
		\end{align}
		
		\item [c)] If matrix $Q$ has three distinct eigenvalues, \ie, $\lambda_i^Q \neq \lambda_j^Q, i\neq j =1,2,3$, for each $v_s\in \mathcal{E}(Q)$, one has
		\begin{align*}
		\Delta(u_q,v_s)
		&= \tr(Q)  - u_q\T Qu_q - 2v_s\T Q v_s (1 - (u_q\T v_s)^2)  \\
		& = \begin{cases}
		\tr(Q)  - u_q\T Qu_q - 2v_s\T Q v_s & q\neq s \\
		\tr(Q)  - u_s\T Qu_s   & q= s \\
		\end{cases}.
		\end{align*}
		Since $\tr(Q)  - u_q\T Qu_q \geq \tr(Q)  - u_q\T Qu_q - 2v_s\T Q v_s$, one obtains
		\begin{align*}
		\max_{u\in \mathbb{U}} \Delta(u_q,v_s)  = \tr(Q) -   \lambda_s^Q  ,\quad  \forall v_s\in \mathcal{E}(Q).
		\end{align*}
		Hence, one can further show that
		\begin{align}
		\min_{v\in \mathcal{E}(Q)}\max_{u\in \mathbb{U}} \Delta(u_q,v)   = \tr(Q) -   \lambda_{\max}^Q.\label{eqb:solution_delta_U3}
		\end{align}
		From (\ref{eqb:solution_delta_U1}), (\ref {eqb:solution_delta_U2}) and (\ref {eqb:solution_delta_U3}), one concludes (\ref{eqb:solution_delta_U}).
	\end{itemize}
	\item [2)] If $\tr(Q) -2\lambda_{\max}^Q> 0$, let $u_1,u_2,u_3$ be any orthonormal basis in $\mathbb{R}^3$ and choose $ \{u_1,u_2,u_3\}\subseteq \mathbb{U}$. Then, for any $v\in \mathcal{E}(Q), u_q\in \mathbb{U}$, one has
	\begin{align*}
	\Delta(u_q,v)   &= \tr(Q) - u_q\T Q u_q- 2\lambda^Q_v(1 - (u_q\T v)^2).
	\end{align*}
	Using the facts $\sum_{q=1}^3(u_q\T v)^2 = 1$ for any $v\in \mathbb{S}^2$ and $ \sum_{q=1}^3 u_q\T Q u_q = \tr(Q)$, one has
	\begin{align*}
	\frac{1}{3}\sum_{q=1}^3 \Delta(u_q,v)   
	&=  \frac{1}{3}\left(3\tr(Q) - \sum_{q=1}^3 u_q\T Q u_q - 2\lambda_v^Q \left(3-\sum_{q=1}^3(u_q\T v)^2\right)\right)  \\
	&=  \frac{1}{3}\left(2\tr(Q)  - 4\lambda_v^Q  \right),
	\end{align*}
	which implies that
	$
	\max_{u_q\in \mathbb{U}} \Delta(u_q,v)  \geq \frac{2}{3}\left(\tr(Q)  - 2\lambda_v^Q  \right) .
	$
	For any $v\in \mathcal{E}(Q)$, one can show that
	\[
	\min_{v\in \mathcal{E}(Q)}\max_{u_q\in \mathbb{U}} \Delta(u_q,v)  \geq \frac{2}{3}\left(\tr(Q)  - 2\lambda_{\max}^Q  \right) .
	\]
	which gives (\ref{eqn:Delta_Q2}). 
\end{itemize}
This completes the proof.
 
\subsection{Proof of Lemma \ref{lemma: tr_and_psi}}
	From the definition of $\mathbb{A}$ in (\ref{eqn:definition_A}) as per Lemma \ref{lemma:Q}, one obtains
	\begin{align*}
	 \sum_{i=1}^{n} k_i \|r_i-g^{-1}r_i\|^2
	& =  \tr\left(  (I_4-g^{-1})\mathbb{A}  (I_4-g^{-1})\T\right)  \\
	& = \tr\left(g\T g  (I_4-g^{-1})\mathbb{A}  (I_4-g^{-1})\T\right)  \\
	& =   \tr\left(  (I_4 - g)\mathbb{A} (I_4 -g)\T\right)  	,
	\end{align*}
	where property (\ref{eqn:property2}) and the fact $(I_4-g^{-1}), (I_4-g^{-1})\mathbb{A} \in \mathcal{M}_0$ have been used. Furthermore, using the fact $(\bar{g}^{-1} g^{-1}r_i) \wedge (\bar{g}^{-1} r_i)  =\Ad{\bar{g}^{-1}}^* ((g^{-1} r_i)\wedge r_i)$, one verifies that
	\begin{align*}
	&\psi\left(\mathbb{P}((I_4 - g^{-1})\mathbb{A})\right)
	=\sum_{i=1}^{n} k_i  \psi\left((I_4  - g^{-1})   r_ir_i\T \right)   = \frac{1}{2} \sum_{i=1}^{n} k_i (g^{-1}r_i  ) \wedge r_i \\
	 &\Ad{g_1}^*  \sum_{i=1}^{n} k_i  (\bar{g}g^{-1}r_i  ) \wedge r_i  =   \sum_{i=1}^{n} k_i  (g^{-1}r_i  ) \wedge (\bar{g}^{-1} r_i).
	\end{align*}
	This completes the proof.

\subsection{Proof of Lemma \ref{lemma:gradient_on_SE_3}}
	Given $g = \mathcal{T}(R,p)$， $Q=A-bb\T d^{-1}$, from (\ref{eqn:definition_A}) one can verify that
	\begin{align*}
		\mathcal{U}(g)
		& =\frac{1}{2} \tr\left( \begin{bmatrix}
		I_3 - R & -p\\
		0 & 0
		\end{bmatrix} \begin{bmatrix}
		A & b\\
		b\T & d
		\end{bmatrix}\begin{bmatrix}
		I_3 - R & -p\\
		0 & 0
		\end{bmatrix} \T\right) \\
		& = \frac{1}{2} \tr\left(  
		(I_3 - R)A (I_3 - R)\T- 2(I_3 - R)bp\T+dpp\T 
		 \right) \\
		& =  \tr\left(Q(I_3 - R)\right) + \frac{1}{2} d\left\|p - (I_3-R)bd^{-1} \right\|^2 .
	\end{align*}
	The gradient $\nabla_g\mathcal{U}(g) $ can be computed using the differential of $\mathcal{U}(g)$ in an arbitrary tangential direction $gX\in T_gSE(3)$ with some $X\in \mathfrak{se}(3)$
	\begin{align}
	d \mathcal{U}(g) \cdot gX
	&= \langle \nabla_g \mathcal{U}(g),gX\rangle_g 
	  = \langle\langle g^{-1}\nabla_g \mathcal{U}(g),X\rangle\rangle \label{eqn:gradient_on_metric}.
	\end{align}
	On the other hand, from the definition of the tangent map, one has
	\begin{align}
	d \mathcal{U}(g) \cdot gX
	& = \tr \left(-gX\mathbb{A}(I_4 -g)\T \right) \nonumber \\
	& =  \langle\langle g\T(g - I_4  ) \mathbb{A}, X\rangle\rangle \nonumber \\
	& =  \langle\langle \mathbb{P}(g\T(g- I_4 ) \mathbb{A}), X\rangle\rangle  \nonumber\\
	& =  \langle\langle \mathbb{P}(g^{-1}(g - I_4 ) \mathbb{A}), X\rangle\rangle  \nonumber \\
	& =  \langle\langle \mathbb{P}((I_4 -g^{-1}) \mathbb{A}), X\rangle\rangle \label{eqn:d_L_A},
	\end{align}
	where the fact $(I_4 -g^{-1})\mathbb{A} \in \mathcal{M}_0$ and property (\ref{eqn:property1}) have been used. Hence, in view of (\ref{eqn:gradient_on_metric}) and (\ref{eqn:d_L_A}), one can verify (\ref{eqn:gradient-g}).	
	
	Using the fact $ g\mathbb{P}((I_4 -g^{-1})\mathbb{A})  \in \mathcal{M}_0$, the identity $\nabla_g \mathcal{U}(g) = 0$ implies that $(I_4 -g^{-1})\mathbb{A} = \mathbb{A}(I_4 -g^{-1})\T$, which can be further reduced as $\mathbb{A}g\T = g\mathbb{A}$. Applying the matrix decomposition (\ref{eqn:A_dcomposition}), one obtains
	\begin{align*}
	&\begin{bmatrix}
	Q & 0 \\
	0 & d
	\end{bmatrix} \begin{bmatrix}
	R & p - (I_3-R)bd^{-1}\\
	0 & 1
	\end{bmatrix}\T  = \begin{bmatrix}
	R & p - (I_3-R)bd^{-1}\\
	0 & 1
	\end{bmatrix} \begin{bmatrix}
	Q & 0 \\
	0 & d
	\end{bmatrix}.
	\end{align*}
	Consequently, one can further deduce that
	\begin{align}
	RQ = QR\T , \quad
	p = (I_3-R)bd^{-1}. \label{eqn: Randp}
	\end{align}
	Following the proof of Lemma 2 in \cite{mayhew2011synergistic}, one has the solution of (\ref{eqn: Randp}) as
	\begin{align*}
	&\{R = I_3, p=0\} \text{ or }  
	 \{R = \mathcal{R}_a(\pi,v), p =(I_3 - \mathcal{R}_{a}(\pi, v))bd^{-1}\},
	\end{align*}
	where $v \in \mathcal{E}(Q)$, which implies (\ref{eqn:set_Psi_U}). This completes the proof.
%
\subsection{Proof of Lemma \ref{lemma:conditions_exp}}
	Let $p_e: = p-(I_3-R)bd^{-1}$. 
	For the case $(I_3-R)bd^{-1}=0$, one has
	\[
	\|I_3-R\|_F^2 + \|p_e\|^2 = |g|_I ^2.
	\]
	For the case $(I_3-R)bd^{-1}\neq 0$, there exists a positive scalar $0 <\varrho_1(R) < \|bd^{-1}\|$ such that $\|(I_3-R)bd^{-1}\| = \varrho_1  \|I_3-R\|_F$. Let $\phi_1: = < p, (I_3-R)bd^{-1}>$ with $<,>$ denoting the angle of two vectors. Then, one has
	\begin{align*}
		&\|I_3-R\|_F^2 + \|p_e\|^2   \\
		& = (1+\varrho_1^2)\|I_3-R\|_F^2 + \|p\|^2 -2\varrho_1\cos\phi_1 \|p\|\|I_3-R\|_F \\
 		& = \begin{bmatrix}
		\|p\| & \|I_3-R\|_F
		\end{bmatrix} \underbrace{\begin{bmatrix}
		1 & -\varrho_1\cos\phi_1 \\
		-\varrho_1\cos\phi_1 & 1+\varrho_1^2
		\end{bmatrix}}_{\Theta_1}  \begin{bmatrix}
		\|p\| \\
		\|I_3-R\|_F
		\end{bmatrix} .
	\end{align*}
	which implies that 
	\begin{equation}
	 s_1 |g|_I^2  \leq \|I_3-R\|_F^2 + \|p_e\|^2 \leq s_2 |g|_I^2 ,
	\end{equation}
	where $s_1:=\min\{1,\lambda_{\min}^{\Theta_1}\}$ and $s_2:=\max\{1,\lambda_{\max}^{\Theta_1}\}$. Note that matrix $\Theta_1$ is always positive definite.
	Using the fact $\bar{Q} = \frac{1}{2}(\tr(Q)I_3-Q)$, by Assumption \ref{assump:1} one has $\lambda_{\min}^{\bar{Q}}>0$. For any $R\in SO(3)$ one verifies that
	\begin{equation}
	\frac{1}{2}\lambda_{\min}^{\bar{Q}} \|I_3 -R\|_F^2 \leq \tr(Q(I_3-R)) \leq \frac{1}{2} \lambda_{\max}^{\bar{Q}} \|I_3 -R\|_F^2.  \label{eqn:bounded_tr_Q}
	\end{equation}
	Then, from (\ref{eqn:definitinU})   one can show that
	\begin{align*}
	\mathcal{U}(g) \geq \frac{1}{2}\lambda_{\min}^{\bar{Q}} \|I_3 -R\|_F^2+ \frac{d}{2}\|p_e\|^2  \geq \alpha_1 |g|_I^2 ,  \\
	\mathcal{U}(g) \leq \frac{1}{2}\lambda_{\max}^{\bar{Q}} \|I_3 -R\|_F^2+ \frac{d}{2}\|p_e\|^2  \leq \alpha_2 |g|_I^2 ,
	\end{align*}
 	with 
 	\begin{align}
 	\alpha_1 := \min\left\{\frac{1}{2}\lambda_{\min}^{\bar{Q}} , \frac{1}{2}d\right\}s_1,\quad \alpha_2 := \max\left\{\frac{1}{2}\lambda_{\max}^{\bar{Q}}, \frac{1}{2}d\right\}s_2.
 	\end{align}
	Using the fact
	\begin{align*}
	(I_4 -g^{-1})\mathbb{A}
	&=\begin{bmatrix}
	I-  R\T & R\T p \\
	0 & 0
	\end{bmatrix}\begin{bmatrix}
	A & b\\
	b\T & d
	\end{bmatrix}  = \begin{bmatrix}
		(I-R\T)A + R\T p b\T & (I-R\T)b + R\T pd \\
		0 & 0
		\end{bmatrix} \\
	& =  \begin{bmatrix}
	(I_3-R\T)Q + R\T p_e b\T & R\T d p_e\\
	0 & 0
	\end{bmatrix} ,
	\end{align*}
	From (\ref{eqn:definiton_psi}), one verifies that
	\begin{align}
	\psi((I_4 -g^{-1})\mathbb{A})=\frac{1}{2}\begin{bmatrix}
	2\psi_a(QR) +  b^\times R\T p_e \\
	R\T   p_ed
	\end{bmatrix} \label{eqn:psi_with_A}.
	\end{align} 
	Hence, one can show that
	\begin{align*}
	& \|\psi((I_4 -g^{-1})\mathbb{A})\|^2 \\
	& = \frac{1}{4}\|2\psi_a(QR) + b^\times R\T p_e\|^2 +  \frac{1}{4} d^2 \|p_e\|^2 \\
	& =  \|\psi_a(QR) \|^2 +  \psi_a(QR)\T b^\times R\T p_e  +  \frac{1}{4}\| b^\times R\T p_e \|^2  + \frac{1}{4}d^2 \|p_e\|^2 \\
	& \geq \|\psi_a(QR) \|^2 -\|\psi_a(QR)\| \| b^\times R\T p_e \| +  \frac{1}{4}\| b^\times R\T p_e \|^2  +  \frac{1}{4}d^2 \|p_e\|^2 \\
	& = \|\psi_a(QR) \|^2 -   |\sin\phi_2|\|b\| \|\psi_a(QR)\|  \| p_e \|+    \frac{1}{4} s_1 \|b\|^2 \|p_e\|^2 +  \frac{1}{4}d^2\| p_e \|^2  \\
	& = \begin{bmatrix}
	\|\psi_a(QR) \|  &
	\| p_e \|
	\end{bmatrix}
	\Theta_2 \begin{bmatrix}
	\|\psi_a(QR) \| \\
	\| p_e \|
	\end{bmatrix},
	\end{align*}
	where
	\begin{align*}
	\Theta_2 := \begin{bmatrix}
	1 & -\frac{1}{2}|\sin\phi_2|\|b\| \\
	-\frac{1}{2}|\sin\phi_2|\|b\|  & \frac{1}{4}(\sin^2\phi_2\|b\|^2 + d^2)
	\end{bmatrix},  
	\end{align*}
	and we made use of the fact that
	$
	\| b^\times R\T p_e \|^2
	 =  p_e\T R(b\T b I_3 - bb\T) R\T p_e
	 = \sin^2\phi_2 \|b\|^2 \|p_e\|^2
	$
	with $\phi_2 := <b,R\T p_e>$. It is straightforward to verify that the matrix $\Theta_2$ is positive definite. Let $\lambda_{\min}^{\Theta_2}$ be the minimum eigenvalue of the matrix $\Theta_2$. One has
	\begin{align*}	
	 \|\psi((I_4 -g^{-1})\mathbb{A})\|^2   &= \frac{1}{4}\|2\psi_a(QR) +  b^\times R\T p_e\|^2 +  \frac{1}{4} d^2 \|p_e\|^2 \\
	& \geq \lambda_{\min}^{\Theta_2} (\|\psi_a(QR) \|^2 +   \| p_e \|^2) \\
	& = \lambda_{\min}^{\Theta_2} ( \vartheta(Q,R)\tr(\underline{Q}(I_3-R)) +   \| p_e \|^2)  \\
	& = \lambda_{\min}^{\Theta_2} \left( \frac{1}{2}\vartheta(Q,R)\lambda_{\min}^{\underline{Q}} \|I_3-R\|_F^2 +   \| p_e \|^2\right),
	\end{align*}
	where  $\underline{Q} := \tr(\bar{Q}^2)I_3 - 2\bar{Q}^2$, and we made use of the fact that $\|\psi_a(QR)\|^2 =\vartheta(Q,R)\tr(\underline{Q}(I_3-R))	$
	with $\vartheta(Q,R) := (1-\frac{1}{8}\|I_3-R\|_F^2 \cos^2\langle u,\bar{Q}u\rangle$  and $u\in \mathbb{S}^2$ denoting the axis of the rotation $R$ (see Lemma 2 in \cite{berkane2017hybrid}). For any $(R,p)\in \Upsilon$, one has $\tr((I-R)Q)-\tr((I-RR_q)Q)\leq \delta/2$, which follows $0<\vartheta(Q,R)\leq 1$. Hence, there exists a positive $\vartheta^*$ such that
	\begin{align*}
	\vartheta^* := \min_{g=\mathcal{T}(R,p)\in\Upsilon} \left (1-\frac{1}{8}\|I_3-R\|_F^2 \cos^2\langle u,\bar{Q}u\rangle\right)  .
	\end{align*}
	Therefore, one obtains
	\begin{align*}	
	\|\psi((I_4 -g^{-1})\mathbb{A})\|^2
	& \geq \lambda_{\min}^{\Theta_2} \left( \frac{1}{2}\vartheta^*\lambda_{\min}^{\underline{Q}} \|I_4 -R\|_F^2 +   \| p_e \|^2\right)   \geq \alpha_3 |g|_I ^2,
	\end{align*}
	where 
	\begin{equation}
	\alpha_3 := \lambda_{\min}^{\Theta_2} \min\left\{1,\frac{1}{2}\vartheta^*\lambda_{\min}^{\underline{Q}}\right\} s_1.
	\end{equation}
	Moreover, one can also show that
	\begin{align*}
	  \|\psi((I_4 -g^{-1})\mathbb{A})\|^2 
	&  = \frac{1}{4}\|2\psi_a(QR) +  b^\times R\T p_e\|^2 +  \frac{1}{4} d^2 \|p_e\|^2 \\
	& \leq 2\|\psi_a(QR) \|^2  +  \frac{1}{2}\| b^\times R\T p_e \|^2 +  \frac{1}{4}d^2 \|p_e\|^2 \\
	&  \leq 2\|\psi_a(QR) \|^2  +    \frac{1}{2}   \|b\|^2 \|p_e\|^2  +  \frac{1}{4}d^2\| p_e \|   \\
	&  \leq \lambda^{\underline{Q}}_{\max}\|I_3-R\|_F^2 + (\frac{1}{2}\|b\|^2 +  \frac{1}{4}d^2) \|p_e\|^2 \\
	&  = \alpha_4 |g|_I ^2.
	\end{align*}
	where \begin{equation}
	\alpha_4 := \max\left\{ (\frac{1}{2}\|b\|^2 +  \frac{1}{4}d^2),\frac{1}{2} \lambda_{\max}^{\underline{Q}}\right\}s_2.
	\end{equation}
	From (\ref{eqn:psi_with_A}),  one obtains
	\begin{align*}
	\Ad{g^{-1}}^* \psi((I_4 -g^{-1})\mathbb{A})
	& =\frac{1}{2} \begin{bmatrix}
	R & p^\times R \\
	0 & R
	\end{bmatrix}\begin{bmatrix}
	2\psi_a(QR) +  b^\times R\T p_e \\
	R\T   p_ed
	\end{bmatrix} \\
	& =\frac{1}{2} \begin{bmatrix}
	2R\psi_a(QR) + R b^\times R\T p_e +  d p^\times p_e  \\
	d p_e
	\end{bmatrix} \\
	& =\frac{1}{2} \begin{bmatrix}
		 2R\psi_a(QR) + (Rb+dp)^\times   p_e   \\
		 d p_e
		 \end{bmatrix} \\
	 & =\frac{1}{2} \begin{bmatrix}
	2R\psi_a(QR) + b^\times   p_e   \\
	d p_e
	\end{bmatrix},
	\end{align*}
	where the facts $dp = dp_e+(I_3-R)b$ and $p_e^\times p_e = 0$ have been used. Similarly, one obtains
	\begin{align*}
	  \|\Ad{g^{-1}}^* \psi((I_4 -g^{-1})\mathbb{A}) \|^2  &\leq  2\|\psi_a(QR)\| + \frac{1}{2} \|b^\times   p_e\|^2  +  \frac{1}{4}  d \|p_e\|^2 \\
	& \leq   \lambda_{\max}^{\underline{Q}} \|I_3-R\|_F^2 +  (\frac{1}{2} \|b\|^2+\frac{1}{4} d)\| p_e\|^2  \\
	&  \leq \alpha_4 |g|_I ^2.
	\end{align*}
	This completes the proof.

\subsection{Proof of Proposition \ref{pro:1}} \label{sec:pro1}
	Let $\tilde{g}=\mathcal{T}(\tilde{R},\tilde{p})$ with $\tilde{R}=\mathcal{R}_a(\theta,v)$, $\theta \in \mathbb{R}$ and $v\in \mathbb{R}^3$. In view of (\ref{eqn:definition_A}) and (\ref{eqn:definitinU1}), one obtains
	\begin{align}
	\mathcal{U}_1(\tilde{g})
	& = \tr((I_3-\tilde{R})Q)+ \frac{1}{2}d\|\tilde{p}-(I_3-\tilde{R})bd^{-1}\|^2  \label{eqn:U_g} \\
	&= 2(1-\cos\theta)v\T \bar{Q}v + \frac{1}{2}d\|\tilde{p}-(I_3-\tilde{R})bd^{-1}\|^2
	\end{align}
	where matrix $\bar{Q}$ is defined in Lemma \ref{lemma:Q}. Under Assumption \ref{assump:1}, one has $d>0$ and $\bar{Q}$ is positive definite. Consequently, $\mathcal{U}_1(\tilde{g})$ has a unique global minimum at $\tilde{g}=I_4$, \ie, $\mathcal{U}_1$ is a potential function on $SE(3)$. From (\ref{eqn:U_g}), for any $\tilde{g}=\mathcal{T}(\tilde{R},\tilde{p})\in \mathcal{X}_{\mathcal{U}_1}$ and $g_q=\mathcal{T}(R_q, p_q) \in \mathbb{Q}$,  one has
	\begin{align}
	\mathcal{U}_{1}(\tilde{g}g_q) &=  \tr((I_3-\tilde{R}R_q)Q)  + {  \frac{1}{2}} d\left\| p_q  - (I_3 - R_q )bd^{-1}  \right\|^2 \nonumber\\
	&= \tr((I_3-\tilde{R})Q)+ \tr(\tilde{R}(I_3-R_q)Q) \nonumber\\
	& =  \mathcal{U}_{1}(\tilde{g})- (1-\cos\theta^*)\Delta_Q(u_q,v) \label{Uggq}.
	\end{align}
	where $u_q\in \mathbb{U}, v\in \mathcal{E}(Q)$, $\Delta_Q(\cdot)$ is defined in (\ref{eqn:Delta_u_v}) in Lemma \ref{lemma:Delta}. We also made use of the following facts: $p_q = (I_3-R_q)bd^{-1}$, $\tilde{R}=\mathcal{R}_a(\pi,v)=2vv\T-I_3$, $R_q = \mathcal{R}_a(\theta^*, u_q)$. From \eqref{Uggq}, one can obtain
	\begin{align*}
	\mathcal{U}_{1}(\tilde{g}) - \min_{g_q\in \mathbb{Q}}\mathcal{U}_{1}(\tilde{g}g_q) &  = (1-\cos\theta^*)  \max_{u_q\in \mathbb{U}} \Delta_Q(u_q,v), \forall v\in \mathcal{E}(Q) \\
	&  \geq  (1-\cos\theta^*) \min_{v\in \mathcal{E}(Q)} \max_{u_q\in \mathbb{U}} \Delta_Q(u_q,v) \\
	& \textstyle =  (1-\cos\theta^*) \Delta^*_Q > \delta,
	\end{align*}
	which implies that $\mathcal{X}_{\mathcal{U}_{1}} \times \mathbb{R}^6 \times SE(3)\times \mathbb{R}^6 \times \mathbb{R}_{\geq 0} \subseteq \mathcal{J}_c$ from (\ref{eqn: definition_jump_set}). In view of \eqref{eqn:kenimatic_g} and \eqref{eqn:observer_design1}-\eqref{eqn:observer_design_sigma1}, one can write the hybrid closed-loop system as in (\ref{eqn:closed_loop}). The proof is completed by using Lemma \ref{lemma:gradient_on_SE_3}, Theorem 1 and the fact that $\mathcal{X}_{\mathcal{U}_{1}} \times \mathbb{R}^6 \times SE(3)\times \mathbb{R}^6 \times \mathbb{R}_{\geq 0} \subseteq \mathcal{J}_c$.

\subsection{Proof Theorem \ref{theo:Theorem_2}}\label{sec: Theorem2}
	As shown in Proposition \ref{pro:1}, for any $\underline{\tilde{g}}\in \mathcal{X}_{\mathcal{U}_2}$, one verifies that
	\begin{align*}
	\mathcal{U}_2(\underline{\tilde{g}}) -  \min_{g_q\in \mathbb{Q}}\mathcal{U}_2(g_c^{-1}{\tilde{g}}g_q g_c)
	&  =	\mathcal{U}_2(\underline{\tilde{g}}) -  \min_{g_q\in \mathbb{Q}}\mathcal{U}_2(\underline{\tilde{g}}g_c^{-1}g_q g_c)  \\
	&   = \mathcal{U}_{1}(\tilde{g}) - \min_{g_q\in \mathbb{Q}}\mathcal{U}_{1}(\tilde{g}g_q)
	> \delta,
	\end{align*}	
	which implies that $\mathcal{X}_{\mathcal{U}_{2}} \times \mathbb{R}^6 \times SE(3)\times \mathbb{R}^6 \times \mathbb{R}_{\geq 0} \subseteq \mathcal{J}_c'$.
	Let us consider the following real-valued function:
	\begin{equation}
	\mathcal{L}_R = \tr(Q(I-\underline{\tilde{R}})) + \frac{1}{2k_\omega} \tilde{b}_\omega  \T \tilde{b}_\omega - \bar{\mu}_1 \psi_a(\underline{\tilde{R}})\T \underline{\hat{R}} \tilde{b}_\omega,
	\end{equation}
	where $\bar{\mu}_1 > 0$. Let $\bar{e}_1 := [\|I_3-\underline{\tilde{R}}\|_F,~\| \tilde{b}_\omega\| ]\T$.  Following similar steps as in the proof of Theorem \ref{theo:Theorem_1}, it is clear that there exists a constant $ c_{b_\omega}: = \sup_{(t,j)\succeq (0,0)}\|\tilde{b}_\omega(t,j)\|$, and a constant $\bar{\mu} _1^*$ such that for all $ \bar{\mu} _1 <  \bar{\mu} _1^*$
	\begin{align}
	&\bar{c}_1 \|\bar{e}_1\|^2 \leq \mathcal{L}_R \leq \bar{c}_2 \|\bar{e}_1\|^2 , \label{eqn:c1_L_R_c2}\\
	&\dot{\mathcal{L}}_R \leq -\bar{c}_3 \mathcal{L} _R, \quad  x'\in \mathcal{F}_c' \label{eqn:dot_L_R},
	\end{align}
	for some positive constants $\bar{c}_1, \bar{c}_2$ and $\bar{c}_3$.  Let us consider the following real-valued function:
	\begin{equation}
	\mathcal{L}_p = \frac{d}{2} \underline{\tilde{p}} \T\underline{\tilde{p}} + \frac{1}{ k_v} \tilde{b}_v  \T \tilde{b}_v - \bar{\mu}_2 \underline{\tilde{p}}\T \underline{R}\tilde{b}_v.
	\end{equation}
	Let $\bar{e}_2 := [\|\underline{\tilde{p}}\|, \|\tilde{b}_v\|]\T$. It is straightforward to show that
	\begin{equation}
	\bar{e}_2\T \underbrace{\begin{bmatrix}
		\frac{d}{2} & -\frac{\bar{\mu}_2}{2} \\
		-\frac{\bar{\mu}_2}{2} & \frac{1}{k_v}
		\end{bmatrix}}_{H_1} \bar{e}_2 \leq \mathcal{L}_p \leq \bar{e}_2\T \underbrace{\begin{bmatrix}
		\frac{d}{2} & \frac{\bar{\mu}_2}{2} \\
		\frac{\bar{\mu}_2}{2} & \frac{1}{k_v}
		\end{bmatrix}}_{H_2} \bar{e}_2. \label{eqn:H1_L_P_H2}
	\end{equation}
	The time-derivative of $\mathcal{L}_p$ along the trajectories of (\ref{eqn:closed_tilde_p3}) and (\ref{eqn:dynamic_tilde_b_v3}) is obtained as
	\begin{align*}
	\dot{\mathcal{L}}_p 
	&=   d\underline{\tilde{p}} \T\dot{\underline{\tilde{p}}} + \frac{1}{ k_v} \tilde{b}_v  \T \dot{\tilde{b}}_v - \bar{\mu}_2 \dot{\underline{\tilde{p}}}\T R\tilde{b}_v
		- \bar{\mu}_2 \underline{\tilde{p}}\T R\dot{\tilde{b}}_v - \bar{\mu}_2 \underline{\tilde{p}}\T \dot{R}\tilde{b}_v \\
	&=  -\frac{d^2}{2}k_\beta   \underline{\tilde{p}}\T \underline{\tilde{p}} + d\underline{\tilde{p}} \T \underline{\tilde{R}}  \underline{\hat{p}}^{\times}\underline{\hat{R}} \tilde{b}_{\omega} + \bar{\mu}_2 k_\beta \frac{d}{2}  \underline{\tilde{p}}\T  R\tilde{b}_v   + \bar{\mu}_2 \tilde{b}_{\omega}\T \underline{\hat{R}} \T \underline{\hat{p}}^{\times} \underline{\tilde{R}}\T R\tilde{b}_v  -\bar{\mu}_2 \tilde{b}_v \T \tilde{b}_v   \\
	& ~~~	+ \frac{d}{2}  k_v  \bar{\mu}_2 \underline{\tilde{p}}\T R    \underline{\hat{R}}\T\underline{ \tilde{R}}\T\underline{ \tilde{p}}    - \bar{\mu}_2 \underline{\tilde{p}}\T R \omega^\times \tilde{b}_v \\
	&\leq  - \frac{d^2}{2}k_\beta \underline{\tilde{p}}\T \underline{\tilde{p}} -\bar{\mu}_2 \tilde{b}_v \T  \tilde{b}_v  + d\underline{\tilde{p}} \T \underline{\tilde{R}}  (\underline{\tilde{R}} \T\underline{p} - \underline{\tilde{R}} \T \underline{\tilde{p}})^{\times}\underline{\hat{R}} \tilde{b}_{\omega}   +\bar{\mu}_2 k_\beta \frac{d}{2} \|\underline{\tilde{p}}\| \|\tilde{b}_v \|  \\
	& ~~~+ \bar{\mu}_2 \tilde{b}_{\omega}\T  \underline{\hat{R}}  (\underline{\tilde{R}} \T\underline{p} - \underline{\tilde{R}} \T \underline{\tilde{p}})^{\times} \underline{\tilde{R}}\T R\tilde{b}_v 
		+ \bar{\mu}_2 \frac{d}{2}k_v\|\underline{\tilde{p}}\|^2 + \bar{\mu}_2 \|\omega\| \| \underline{\tilde{p}}\|  \|\tilde{b}_v \| \\
	&\leq  - \frac{d^2}{2}k_\beta \|\underline{\tilde{p}}\|^2 -\bar{\mu}_2 \|\tilde{b}_v\|^2  + d\bar{c}_p\|\underline{\tilde{p}}\| \| \tilde{b}_{\omega}\| + \bar{\mu}_2   k_\beta \frac{d}{2} \|\underline{\tilde{p}}\| \|\tilde{b}_v \| + \bar{\mu}_2 \bar{c}_p \|\tilde{b}_{\omega}\|\|\tilde{b}_v \|  \\
	& ~~~ + \bar{\mu}_2 c_{b_\omega} \|\underline{\tilde{p}}\| \|\tilde{b}_v\|
	+ \bar{\mu}_2 \frac{d}{2}k_v\|\underline{\tilde{p}}\|^2  + \bar{\mu}_2 c_\omega \| \underline{\tilde{p}}\|  \|\tilde{b}_v \|,
	\end{align*}
	where $\bar{c}_p:=\|\underline{p}\|=\|p-bd^{-1}\| $, and the following facts have been used: $\underline{\hat{R}}=\hat{R}, \underline{R}=R$ and $\underline{\hat{p}}=\underline{\tilde{R}}\T(\underline{p}-\underline{\tilde{p}})$.
	Let  $\bar{c}_4 := \max\{d,\bar{\mu}_2 \}$ and $ \bar{c}_5: =  k_\beta \frac{d}{2} +  c_\omega +  c_{b_\omega}$. Then, one has
	\begin{align*}
	\dot{\mathcal{L}}_p
	&\leq -\bar{e}_2\T\underbrace{\begin{bmatrix}
		\frac{d^2}{2}k_\beta -\bar{\mu}_2 \frac{d}{2}k_v & -\frac{1}{2} \bar{\mu}_2\bar{c}_5 \\
		-\frac{1}{2} \bar{\mu}_2\bar{c}_5  & \bar{\mu}_2
		\end{bmatrix}}_{H_3}\bar{e}_2  + \sqrt{2}\bar{c}_4 \bar{c}_p\|\tilde{b}_{\omega}\|  \|\bar{e}_2\|.
	\end{align*}
	To guarantee that the matrices $H_1,H_2$ and $H_3$ are positive definite, it is sufficient to pick $\bar{\mu}_2$ such that
	$
	0 < \bar{\mu}_2 < \min \left\{
	\frac{\sqrt{2d}}{\sqrt{k_v }}, \frac{k_\beta d^2}{dk_v+\frac{1}{2}\bar{c}_5^2}
	\right\}.
	$
	Hence, one has
	\begin{align}
	\dot{\mathcal{L}}_p
	& \leq - \lambda_{\min}^{H_3}\|\bar{e}_2\|^2 + \sqrt{2}\bar{c}_4 \bar{c}_p\|\bar{e}_1\|  \|\bar{e}_2\|  \nonumber \\
	& \leq  - \eta_2 \mathcal{L}_p + \eta_3\sqrt{\mathcal{L}_R}\sqrt{\mathcal{L}_p}
	\label{eqn:dot_L_p}, \quad  x'\in \mathcal{F}_c',
	\end{align}
	where $\eta_2:=  {\lambda_{\min}^{H_3}}/{\lambda_{\max}^{H_2} }>0$, $\eta_3:= { \sqrt{2}\bar{c}_4 \bar{c}_p}/{ \sqrt{\scalemath{0.8}{\lambda_{\min}^{H_1}}\bar{c}_1}}>0$, and we made use of the fact $\|\tilde{b}_{\omega}\| \leq \|\bar{e}_1\| \leq \frac{1}{\sqrt{\bar{c}_1}} \sqrt{\mathcal{L}_R} $.  Let $\zeta_1 := \sqrt{\mathcal{L}_R}$. $\zeta_2 :=\sqrt{\mathcal{L}_p}$ and  $\zeta := [\zeta_1, \zeta_2]\T$.
	From (\ref{eqn:dot_L_R}) and (\ref{eqn:dot_L_p}), one obtains
	\begin{align}
	\dot{\zeta} & \leq -H_4  \zeta, \quad H_4: = \begin{bmatrix}
	\bar{c}_3 & 0\\
	-\frac{\eta_3}{2} & \frac{\eta_2}{2}
	\end{bmatrix}.
	\end{align}
	One can easily verify that $H_4$ is positive definite. Let us consider the following Lyapunov function candidate:
	\begin{equation}
	\mathcal{L}'(x') = \mathcal{L}_R + \mathcal{L}_p = \|\zeta\|^2.
	\end{equation}
	Using the fact $|x'|_{\bar{\mathcal{A}}'}^2 = |\underline{\tilde{g}}|_I^2 + \|\tilde{b}_a\|^2 = \|\bar{e}_1\|^2 + \|\bar{e}_2\|^2$, from (\ref{eqn:c1_L_R_c2}), (\ref{eqn:dot_L_R}), (\ref{eqn:H1_L_P_H2}) and (\ref{eqn:dot_L_p}), one has
	\begin{align}
	&\underline{\alpha} |x'|_{\bar{\mathcal{A}}'}^2 \leq \mathcal{L}'(x') \leq \bar{\alpha} |x'|_{\bar{\mathcal{A}}'}^2 \label{eqn:inequalities2}\\
	&\dot{\mathcal{L}}'(x')  \leq -\zeta\T H_4 \zeta \leq - \lambda'_F \mathcal{L}'(x'), \quad x'\in \mathcal{F}_c',  \label{eqn:decrease_L_F2}
	\end{align}	
	where $\lambda'_F:= \lambda_{\min}^{H_4}$, $\underline{\alpha}: = \min\{\bar{c}_1, \lambda_{\min}^{H_1}\}$ and $\bar{\alpha}: = \max\{\bar{c}_2, \lambda_{\max}^{H_2}\}$. On the other hand, using the facts  $ \underline{\tilde{R}}^+=\underline{\tilde{R}}R_q,  \tilde{b}_\omega^+ = \tilde{b}_\omega, \tilde{b}_v^+ = \tilde{b}_v$ and
	$\underline{\tilde{p}}^+= \underline{\tilde{p}}$, one has
	\begin{align*}
	\mathcal{L}(x'^+) - \mathcal{L}(x')& = \|\zeta^+ \|^2  - \|\zeta \|^2  = \mathcal{L}_R^+ +  \mathcal{L}_p^+
	-  \mathcal{L}_R -  \mathcal{L}_p   \\
	& =  \tr(Q(I-\underline{\tilde{R}}^+))  - \tr(Q(I-\underline{\tilde{R}}))   - \bar{\mu}_1 \psi_a(\underline{\tilde{R}})\T \underline{\hat{R}} \tilde{b}_\omega + \bar{\mu}_1 \psi_a(\underline{\tilde{R}}^+)\T \underline{\hat{R}^+} \tilde{b}_\omega^+ \\
	& <  -\delta + \bar{\mu}_1 (\|\psi_a(\underline{\tilde{R}})\|     +   \|\psi_a(\underline{\tilde{R}}R_q)\|) \|\tilde{b}_\omega\| \\
	& \leq  -\delta + 4 \bar{\mu}_1   c_{b_\omega}.
	\end{align*}
	Choosing $\bar{\mu}_1 < \min \{\bar{\mu}_1^*,{\delta}/{4c_{b_\omega}}\}$ such that
	\begin{equation}
	\mathcal{L}(x'^+) \leq \mathcal{L}(x') -\bar{\delta}^*, \quad x'\in \mathcal{J}_c',  \label{eqn:decrease_L_J2}
	\end{equation}
	where $\bar{\delta}^* := -\delta + 4 \bar{\mu}_1    c_{b_\omega}>0$. In view of (\ref{eqn:decrease_L_F2}) and (\ref{eqn:decrease_L_J2}), one has $\mathcal{L}'(t,j) \leq  \mathcal{L}'(0,0) \exp(-\lambda'_F t)$ and $0\leq j\leq J_{\max}: = \left\lceil{{\mathcal{L}'}(0,0)}/{\bar{\delta}^*}~\right\rceil$. One can show that $\mathcal{L}'(x'^+)\leq \exp(-\lambda_J')\mathcal{L}'(x')$, with $\lambda_J': = -\ln (1-\bar{\delta}^*/\mathcal{L}'(0,0))$. Consequently, one obtains
	\begin{equation}
	\mathcal{L}'(t,j) \leq \exp(-2\lambda' (t+j)) \mathcal{L}'(0,0) \label{eqn:L_t_j2},
	\end{equation}
	where $\lambda': = \frac{1}{2}\min\{\lambda_F',\lambda_J'\}$. From (\ref{eqn:inequalities2}) and (\ref{eqn:L_t_j2}), for each $(t,j)\in \dom{x'}$ one has
	\begin{equation}
	|x'(t,j)|_{\bar{\mathcal{A}}'}\leq  k'\exp\left(-\lambda' (t+j) \right)|x'(0,0)|_{\bar{\mathcal{A}}'},
	\end{equation}
	where $k': = \sqrt{{\bar{\alpha}}/{\underline{\alpha}}}$.
	Using the same arguments as in the proof of Theorem \ref{theo:Theorem_1}, one can conclude that the solution to the hybrid system $\mathcal{H}'$ is complete. This completes the proof.

\end{document}